\colorlet{darkblue}{blue!90!black}
\colorlet{darkred}{red!90!black}
 \def\mate#1{\comment[darkblue]{MG: #1}}
 \def\harprit#1{\comment[darkred]{HS: #1}}
\def\scal#1{\langle #1 \rangle}
\def\bigscal#1{\big\langle #1 \big\rangle}
\def\harprit#1{\comment[darkred]{HS: #1}}
\newcommand{\C}{\mathbb{C}}
\newcommand{\cE}{\mathcal{E}}
\newcommand{\T}{\mathbb{T}}
\newcommand{\RR}{\mathbb{R}}
\newcommand{\R}{\mathbb{R}}
\newcommand{\ZZ}{\mathbb{Z}}
\newcommand{\Z}{\mathbb{Z}}
\newcommand{\NN}{\mathbb{N}}
\newcommand{\N}{\mathbb{N}}
\newcommand{\cP}{\mathcal{P}}
\newcommand{\cS}{\mathcal{S}}
\newcommand{\TT}{\mathbb{T}}
\newcommand{\eps}{\varepsilon}
\newcommand{\cF}{\mathcal{F}}
\newcommand{\E}{\mathbb{E}}
\newcommand{\cB}{\mathcal{B}}
\newcommand{\scF}{\mathscr{F}}
\newcommand{\vertiii}[1]{{\left\vert\kern-0.25ex\left\vert\kern-0.25ex\left\vert #1 
		\right\vert\kern-0.25ex\right\vert\kern-0.25ex\right\vert}}
\newtheorem{theorem}{Theorem}[section]
\newtheorem{corollary}[theorem]{Corollary}
\newtheorem{lemma}[theorem]{Lemma}
\newtheorem{prop}[theorem]{Proposition}
\theoremstyle{remark}
\newtheorem{remark}[theorem]{Remark}
\DeclareMathOperator{\supp}{supp}
\DeclareMathOperator{\Poly}{P}
\newcommand{\id}{\mathrm{id}}
\numberwithin{equation}{section} 
\numberwithin{figure}{section} 
\numberwithin{table}{section} 
\title{	
\normalfont \normalsize 
\large Strong convergence of parabolic rate $1$ of discretisations of stochastic Allen-Cahn-type equations
 \\ 
}
\author{M\'at\'e Gerencs\'er$\,^1$ and Harprit Singh$\,^2$}
\institute{$\,^1$TU Wien, $\,^2$Imperial College London}
\begin{document}

\maketitle 
\begin{abstract}

Consider the approximation of stochastic Allen-Cahn-type equations (i.e. $1+1$-dimensional space-time white noise-driven stochastic PDEs with polynomial nonlinearities $F$ such that $F(\pm \infty)=\mp \infty$)
by a fully discrete space-time explicit finite difference scheme.
The consensus in literature, supported by rigorous lower bounds, is that strong convergence rate $1/2$ with respect to the parabolic grid meshsize is expected to be optimal.
We show that one can reach almost sure convergence rate $1$ (and no better) when measuring the error in appropriate negative Besov norms, by temporarily `pretending' that the SPDE is singular.

\end{abstract}

\tableofcontents
\newpage

\section{Introduction}
Consider the $1+1$-dimensional stochastic Allen-Cahn equation
\begin{equ}\label{eq:A-C}
\partial_t u=\Delta u+u-u^3+\xi
\end{equ}
on $[0,\infty)\times\T$, driven by a $1+1$-dimensional space-time white noise $\xi$,
with some initial condition $\psi$.
The well-posedness of \eqref{eq:A-C} is classically well-understood (see e.g.\cite{FJL}).
In this paper we are interested in the discretisation of \eqref{eq:A-C} and SPDEs of the more general form
\begin{equ}\label{eq:general}
\partial_t u=\Delta u+F(u)+\xi.
\end{equ}
When the nonlinearity $F$ is globally Lipschitz continuous, then it is well-known \cite{Gy0}  that with grid of parabolic meshsize of order $n^{-1}$ (for precise details see below) the error of a space-time explicit finite difference scheme is of order $n^{-1/2}$ in $L_p(\Omega)$.
The results of \cite{Gy0} have seen far-reaching extensions both in terms of the regularity and the growth conditions imposed on $F$.
As for regularity, \cite{Mate} proved rate $1/2$ in the case of merely bounded measurable $F$, without assuming any continuity.
As for growth, \cite{Jentzen} showed rate $1/2$ (for a different, Galerkin-type fully discrete scheme)
for a class of $F$-s that are only locally Lipschitz, in particular covering the example \eqref{eq:A-C}.
Further recent results on full discretisations on \eqref{eq:A-C} and \eqref{eq:general} can be found in \cite{Liu, Wang}.
Let us also mention that for Burgers-type equations, where $F$ depends also on the gradient of the solution, the rate of convergence $1/2$ has been proven for spatial semidiscretisations \cite{AG, HMat}.
The appearance of the exponent $1/2$ everywhere above is not a coincidence, \cite{DG} shows that this is sharp in the following sense: 
even in the simplest linear case $F\equiv 0$ the conditional variance of $u_1(0)$ given the discrete observations of $\xi$ is bounded from below by a positive constant times $n^{-1/2}$.
A similar lower bound is derived in \cite{Jentzen} for the Galerkin-type approximations considered therein.

With matching lower and upper bounds, there appears to be not much room for improvement. The present paper aims to show otherwise.
As already observed by Davie and Gaines \cite{DG} in the simplest case $F\equiv 0$, although the pointwise error is of order $n^{-1/2}$, this does not rule out superior convergence rates when measured in a distributional norm. 
First we make this observation quantitative: measuring in the Besov space $B^{\alpha}_{\infty,\infty}$, $\alpha<1/2$, the error in the linear case $F\equiv 0$ is shown to be of order $n^{(-1/2+\alpha+\eps)\vee(-1)}$, for any $\eps>0$, see Lemma \ref{lem:upperbound} below. 
Pursuing this idea further for a nonlinear equation like \eqref{eq:A-C} and considering $u$ and its approximation $u^n$ as elements of $B^{\alpha}_{\infty,\infty}$ comes across an obvious obstacle: as soon as $\alpha<0$, the mapping $u\mapsto u^3$ is simply not defined. This is reminiscent to the difficulty that one has to overcome when solving higher dimensional $\Phi^4_d$ equations.

In this way, the SPDE looks singular in the sense of \cite{H0}, albeit in an artificial way, and one does not actually expect any renormalisation to appear.
In fact, we will only need the simplest tool for singular SPDE-s, the Da Prato-Debussche trick \cite{DPD}.
It turns out that for any polynomial $F$ of odd degree with negative leading order coefficient this is sufficient to implement the above strategy and we obtain rate $1-\eps$ of strong convergence, when measuring the error in the appropriate Besov norm.
This is the content of our main result, Theorem \ref{thm:main} below, which we state after the precise formulation of our setup.

\begin{remark}
The rate $1$ can not be improved even in a distributional sense, see Proposition \ref{prop:lowerbound}.
\end{remark}

\begin{remark}\label{rem:BDG}
In the proof
one surprisingly encounters seemingly unrelated regularisation by noise tools.
The precise occurence of this term is $\cE^{n,1}$ in \eqref{eq:CE-decomp} in the proof of the main result, here we just outline the heuristics of such a term.
One term that one expects to see when comparing the true equation \eqref{eq:general} with its approximation is $F(u)-F(\pi_n u)$, where $\pi_n$ is the projection on the space-time grid. Since the parabolic regularity of the solution $u$ is known to be $1/2$, it may look like such a term can also not be bounded by anything better than $n^{-1/2}$.
One way of improving the rate for such terms is to notice that since in the mild formulation they appear under a space-time integral,
one can exploit averaging effects.
Using estimates obtained through regularisation by noise, one can get a bound of order $n^{-1}$ \cite{Mate}.
\end{remark}
\begin{remark}\label{rem:SDE}
There is no clear finite dimensional analogy to our result. Again staying on the heuristic level: suppose that one wishes to improve on the well-known rate $1/2$ for a finite dimensional SDE
\begin{equ}\label{eq:SDE-rem}
dX_t=\sigma(t,X_t)\,dW_t,
\end{equ}
where $W$ is a standard Brownian motion, by measuring the error in some negative Besov (in time) norm. Under mild assumptions on $\sigma$ it is known \cite{MG2004} that any approximation method $X^n$ based on $n$ evaluations of $W$ makes an error of order $n^{-1/2}$ at time $1$. Then one expects that any approximation on the time interval $[1,2]$, measured in any norm, would make a larger strong error than the exact solution $Y$ of \eqref{eq:SDE-rem} starting at time $1$ from $Y_1=X^n_1$.
But choosing $\sigma(t,x)\equiv 1$ for $t\in[1,2]$, the difference between the true solution $X$ and $Y$ is constant in time and equals $X_1-Y_1=X_1-X^n_1$, and therefore cannot be made of higher order in any reasonable norm. See also Figure \ref{figureB} below.
\end{remark}

\subsection{The setup}
Fix a complete probability space $(\Omega,\cF,\mathbb{P})$ carrying a $1+1$-dimensional space-time white noise $\xi$. That is, $\xi$ is a mapping from $\cB_b([0,\infty)\times \T)$, the bounded Borel sets of $[0,\infty )\times \T$, to $L_2(\Omega)$ such that for any collection $A_1,\ldots,A_k$ of elements of $\cB_b([0,\infty)\times\T)$, the vector $\big(\xi(A_1),\ldots,\xi(A_k)\big)$ is Gaussian with mean $0$ and covariance $\E\big(\xi(A_i)\xi(A_j)\big)=|A_i\cap A_j|$.
We denote by $(\cF_t)_{t\geq0}$ the complete filtration generated by $\xi$.

We take the nonlinearity $F$ in \eqref{eq:general} to be a polynomial of odd degree with negative leading order coefficient. That is, 
\begin{equ}
F(v)=\sum_{j=0}^{\nu} c_j v^j,
\end{equ}
with some odd integer $\nu\geq 3$, $c_0,\ldots, c_{\nu-1}\in \RR$, and $c_{\nu}<0$.
The prototypical example to have in mind is the stochastic Allen-Cahn equation \eqref{eq:A-C}, where $F(v)=-v^3+v$.

As in \cite{Gy0, Mate}, we consider a finite difference, forward Euler approximation of \eqref{eq:general}. Introduce the space and time grids
\begin{align}
\Pi_n &=\{0,(2n)^{-1},\ldots,(2n-1)(2n)^{-1}\}\subset \mathbb{T},\qquad
\Lambda_n= \{0, h, 2h,\ldots\}\subset \RR,
\end{align}
for $n\in\NN$ and where $h$ satisfies the relation $h=c(2n)^{-2}$ for some $c\in(0,1/4)$.
On $\Pi_n$, just like on $\TT$,  addition (as well as negation and subtraction) is understood in a periodic way, e.g. $(2n-1) (2n)^{-1}+(2n)^{-1}=0$.

The approximation scheme is defined by setting
$u^n_0( x)=\psi( x)$ for $ x\in\Pi_n$ and
then inductively
\begin{align}
u^n_{{t}+h}( {x}) &=u^n_{{t}}( {x})+h \Delta_n u^n_{ t}( x)
+h F(u^n_{ t}({x}))+h  \eta_n ({t}, {x}) \label{eq:approx classical form}
\end{align}
for $ t\in\Lambda_n$ and $ x\in\Pi_n$, where
the discrete Laplacian is defined as
\begin{equation}
\Delta_n f( x)=\frac{f( x+(2n)^{-1})-2f( x)+f( x-(2n)^{-1})}{(2n)^2}\, ,
\end{equation}
the discrete noise term is given by
\begin{equation}
\eta_n ({t}, {x})=2n h^{-1}\xi\Big([ t, t+h]\times[ x, x+(2n)^{-1}]\Big)\ .
\end{equation}

\subsection{Function spaces}
The function spaces $C^\alpha(\TT)$ for $\alpha\in (0,1)$, $L^p(\Omega)$, $L^p(\TT)$ for $p\in[1,\infty]$ are defined in the usual way. By $C(\TT)$ we denote the the space of continuous functions on $\TT$ equipped with the supremum norm. For $k\in\N$, $C^r(\T)$ ($C^r(\R)$, resp.) denote the $r$ times continuously differentiable functions equipped with the usual norm.
If the target space differs from $\RR$ (for example, $\C$) it is indicated in the notation (for example, as $L^2(\TT;\C)$).
By $\scal{\cdot,\cdot}$ we denote the complex inner product on $L^2(\T;\C)$.

For functions on $\Pi_n$ we define analogous norms 
\begin{equs}
\|f\|_{C^\alpha(\Pi_n)}&=\sup_{x\in \Pi_n}|f(x)|+[f]_{C^\alpha(\Pi_n)}=\sup_{x\in \Pi_n}|f(x)|+\sup_{x\neq y\in\Pi_n}\frac{|f(x)-f(y)|}{|x-y|^\alpha},
\\
\|f\|_{L^p(\Pi_n)}^p&=\frac{1}{2n}\sum_{x\in\Pi_n}|f(x)|^p,
\end{equs}
for $\alpha\in(0,1)$ and $p\in [1,\infty)$.
As usual, the norm $\|\cdot\|_{L^2(\Pi_n;\C)}$ induces a complex inner product that we denote by $\langle\cdot,\cdot\rangle_{n}$.
By $C(\Pi_n)$ we denote the space of all functions on $\Pi_n$ and by $\|f\|_{L^\infty(\Pi_n)}$ the maximum norm on $C(\Pi_n)$.
Convolution of elements of $C(\Pi_n)$ is denoted by $\ast_n$, that is, $(f\ast_n g)(x)=\scal{f(x-\cdot),g(\cdot)}_n$.
The norms $\|\cdot\|_{L^p(\Pi_n)}$ satisfy H\"older's inequalities. In addition, one has the $n$-dependent inequality $\|f\|_{L^\infty(\Pi_n)}\leq 2n\|f\|_{L^1(\Pi_n)}$. More generally, for any $1\leq q\leq p\leq \infty$ one has
\begin{equ}\label{eq:reverse-discrete-Lp}
\|f\|_{L^p(\Pi_n)}\leq (2 n)^{1/q-1/p}\|f\|_{L^q(\Pi_n)},
\end{equ}
see Lemma \ref{lem:bernstein} below for a slightly stronger version and a short proof.

Any continuous function on $\TT$ can be restricted to $\Pi_n$. We denote this operation by
\begin{equ}
\delta: C(\TT)\to C(\Pi_n),\qquad (\delta f)(x)=f(x),\,x\in\Pi_n.
\end{equ}

The functions $e_j(x)= e^{i 2 \pi j x}$ for $j\in\ZZ$ are eigenfunctions of $\Delta$ with eigenvalues $\lambda_j= -4 \pi^2j^2$.
It is also well-known that $(e_j)_{j \in \ZZ}$ forms an orthonormal basis of $L^2(\TT; \mathbb{C})$. 
From now on,  the conjugate of a complex number $z\in\mathbb{C}$ is denoted by $\overline{z}$.

Recall from \cite[Prop.~2.1.1]{Mate} that $\delta e_j$, $j\in\ZZ$ are eigenfunctions of $\Delta_n$, with eigenvalues
\begin{equ}
\lambda^n_j=-16 n^2 \sin^2\Big( \frac{j \pi}{2n}\Big).
\end{equ}
Moreover, the set $(\delta e_j)_{j=-n,-n+1,\cdots ,n-1}$ forms an orthonormal basis of $L^2(\Pi_n;\C)$.
For $j\in\{-n,\ldots,n-1\}$, the ratios of the eigenvalues $\gamma_j^n=\frac{\lambda_j^n}{\lambda_j}=\frac{\sin^2(j\pi/2n)}{(j\pi/2n)^2}$, with the convention $\gamma_0^n=1$, satisfy the bounds
\begin{equ}\label{eq:gamma-prop1}
4 \pi^{-2} \leq \gamma^n_j\leq 1,
\end{equ}
\begin{equs}        \label{eq:gamma-prop2}
|1- \gamma^n_j| \leq \frac{1}{3}\Big( \frac{j \pi}{n } \Big)^2,
\end{equs} 
see \cite{Mate}.

\begin{remark}\label{rem:symm1}
The lack of symmetry in the range of $j$ stems from the fact that we have an even number of grid points. But note that $\delta e_{-n}=\delta e_n$ and thus $\delta e_n=\delta\big((1/2)(e_{-n}+e_n)\big)$.
\end{remark}

Next, we define Besov spaces.
Our setup will be a bit more convoluted than usual, in order to be able to handle their discrete counterparts conveniently.
Fix some $\eps_0\in(0,1/10)$ and take a smooth even bump function $\phi^0:\mathbb{R}\to [0,1]$, such that $\phi^0|_{B_{1-\eps_0}}\equiv 1$ and $\supp \phi^0\subset B_{1}$, where $B_r=\{x\in\R:\,|x|\leq r\}$.
Set $\rho_0=\frac{3-2\eps_0}{2}$.
For $\rho\in[1,2]$ define $\phi_\rho^0(x)=\phi^0\big((\rho_0/\rho)x\big)$.
Further, set for positive integers $j$
\begin{equ}
\phi^{j}_\rho(x)= \phi^{0}_\rho(2^{-j}x)-\phi^{0}_\rho(2^{-j+1}x)=\phi_\rho^1(2^{-j+1}x)\ .
\end{equ}
Note that the definitions are set up such that $\phi^1_\rho\equiv 1$ on $\big(\rho/\rho_0,(\rho/\rho_0)(2(1-\eps_0))\big)$; the midpoint of this interval is exactly $\rho$. Similarly, $\phi^{j+1}_\rho$ is constant $1$ in a neighborhood of $2^j\rho$, this fact is used in Remark \ref{rem:PL} below.

For any distribution $f\in \cS'(\T)$ define the Littlewood-Paley blocks
\begin{equ}
f^{[j]}=\sum_{k\in \Z}\phi^j_1(k)\scal{f, e_k}e_k.
\end{equ}
We then define the (inhomogeneous) Besov spaces via the norms
\begin{equ}
\|f\|_{B^{\alpha}_{p,q}(\T)}=\big\|j\mapsto 2^{\alpha j}\|f^{[j]}\|_{L^p(\T)}\big\|_{\ell^q}.
\end{equ}
So far the choice of $\rho$ did not play a role and in fact it replacing $\phi^j_1$ with $\phi^j_\rho$ would result in an equivalent norm for any $\rho\in[1,2]$. 

For the discrete analogues of Besov norms, for any integer $n\geq 2$ define $J_n=\lfloor \log_2 n\rfloor$ and $\rho_n=n2^{-J_n}\in[1,2]$.
For $f\in C(\Pi_n)$ define the discrete Littlewood-Paley blocks
\begin{equ}
f^{[j],n}=\sum_{k=-n}^{n-1} \phi^j_{\rho_n}(k)\scal{f, \delta e_k}_n\delta e_k
\end{equ}
and the discrete Besov norms
\begin{equ}
\|f\|_{B^{\alpha}_{p,q}(\Pi_n)}=\big\|j\mapsto 2^{\alpha j}\|f^{[j],n}\|_{L^p(\Pi_n)}\big\|_{\ell^q}.
\end{equ}
Just like \eqref{eq:reverse-discrete-Lp} allows one to trade powers of $n$ for integrability, one can also trade for regularity: for any $\alpha\leq \beta$ one has 
\begin{equ}\label{eq:reverse-discrete-regularity}
\|f\|_{B^{\beta}_{p,q}(\Pi_n)}\leq (2n)^{\beta-\alpha}\|f\|_{B^{\alpha}_{p,q}(\Pi_n)}.
\end{equ}
\begin{remark}\label{rem:PL}
With the above setup, we have that for $j\in\{0,\ldots J_n\}$, $\phi^j_{\rho_n}$ is supported on $B_{2^{J_n}\rho_n/\rho_0}=B_{n/\rho_0}$, therefore one has
\begin{equ}
f^{[j],n}=\sum_{k\in\Z} \phi^j_{\rho_n}(k)\scal{f, \delta e_k}_n\delta e_k,
\end{equ}
now importantly having the summation over $k\in\Z$.
For the $(J_n+1)$-th block we can use the fact that $\delta e_k=\delta e_{k-2n}$, to rewrite it as
\begin{equ}
f^{[J_n+1],n}=\sum_{k\in\Z} \tilde\phi^{J_n+1}_{\rho_n}(k)\scal{f, \delta e_k}_n\delta e_k,
\end{equ}
where $\tilde \phi^{j+1}_\rho=\phi^{j+1}_\rho\mathbf{1}_{(0,2^j\rho)}+\phi^{j+1}_\rho(\cdot-2^{j+1}\rho)\mathbf{1}_{[2^j\rho,\infty)}$ is the rescaling of a smooth (!) function $\tilde\phi^1_1$. In particular, $\tilde\phi^{J_n+1}_{\rho_n}(x)=\tilde \phi^1_1(n^{-1}x)$.
\end{remark}

While the restriction operator $\delta$ is rather canonical, for extending functions from $\Pi_n$ to $\T$ there are many choices.
One that plays an important role for us is given by
\begin{equ}
\iota: C(\Pi_n)\to C(\T),\qquad \iota f = \sum_{j=-n}^{n-1}\langle f,\delta e_j\rangle_{n} e_j.
\end{equ} 
One has $\delta\iota=\id_{C(\Pi_n)}$ but $\iota\delta\neq \id_{C(\T)}$ in general.

\begin{remark}\label{rem:symm2}
One observes that even when $f\in C(\Pi_n)$ is real valued, in general $\iota f$ is complex valued. Another natural extension map (cf. Remark \ref{rem:symm1}), which preserves being real valued would be given by 
$$\tilde{ \iota}f:=\sum_{j=-(n-1)}^{n-1}\langle f,\delta e_j\rangle_{n} e_j +  \frac{\langle f,\delta e_{-n}\rangle_{n}}{2} (e_n+e_{-n}).$$
While the maps $\iota$ and $\tilde{ \iota}$ share all relevant properties, the former has a more compact explicit formula and thus we work with it.
\end{remark}

Finally, a notational convention: in proofs of statements we
use the shorthand $f\lesssim g$ to mean that there exists a constant $N$ such that $f\leq N g$, and that $N$ does
not depend on any other parameters than the ones specified in the statement. 
Any additional dependence is denoted in subscript.

%
%

\subsection{Main result}
We are now in a position to state the main result of the article.
\begin{theorem}\label{thm:main}
Let $\theta\in(-1/2,0]$ and $\eps\in(0,1/2+\theta)$. Assume that $\psi\in C^{1-\eps}(\T)$. Then there exists an almost surely finite random variable $\eta$ such that for all $n\in\N$
\begin{equ}\label{eq:main-error}
\sup_{t\in\Lambda_n\cap[0,1]}\|\delta u_t-u^n_t\|_{B^{\theta}_{\infty,\infty}(\Pi_n)}\leq \eta n^{-1/2+\theta+\eps}.
\end{equ}
\end{theorem}
One can rewrite the left-hand side of \eqref{eq:main-error} in terms of the error tested against test functions. For a function $\varphi:\R\to\R$, $x\in\T$, and $\lambda\in(0,1]$ define the rescaled and recentered function $\varphi_x^\lambda(y)=\lambda^{-1}\varphi\big(\lambda^{-1}(y-x)\big)$. Note that if $\varphi$ is smooth on $\R$ and $\supp \varphi\subset B_{1/3}$, then all functions $\varphi_x^\lambda$ are smooth on $\T$.
We define a set of appropriately normalised test functions
\begin{equ}
\Phi=\{\varphi\in C^1(\R):\, \supp \varphi\subset B_{1/3},\, \|\varphi\|_{C^1(\R)}\leq 1 \}.
\end{equ}
The following is then an immediate consequence of Theorem \ref{thm:main} above and Lemma \ref{lemma:testing against test functions} below.
\begin{corollary}
In the setting of Theorem \ref{thm:main}, there exists a random variable $\bar\eta$ such that for all $n\in\N$ 
\begin{equ}
\sup_{t\in\Lambda_n\cap[0,1]}\sup_{\varphi\in \Phi} \sup_{x\in\Pi_n}\sup_{\lambda\in(n^{-1},1]}
\lambda^{-\theta}\Big|\sum_{y\in\Pi_n}\big(u_t(y)-u^n_t(y)\big)(\varphi_x^\lambda(y))\Big|
\leq \bar\eta n^{-1/2+\theta+\eps}.
\end{equ}
\end{corollary}

\begin{remark}
In  comparison to  \cite{Liu, Wang, Jentzen} we do not obtain $L^p(\Omega)$ bounds, on the other hand we require neither an implicit nor a truncated (or ``tamed'') scheme.
This is somewhat reminiscent in spirit of \cite{B19}, where for the stochastic Allen-Cahn equation \eqref{eq:A-C} an explicit time-splitting scheme without taming is studied, and rate of convergence in probability is obtained.
Let us mention however that the key intermediate step of a priori bounds of the approximation can be turned into an $L^p(\Omega)$ bound on a truncated scheme, see Remark \ref{rem:trunc}.
\end{remark}

\begin{remark}
Considering the solutions as elements of distributional spaces is necessary in higher dimensional versions of the stochastic Allen-Cahn equations, also known as the dynamical $\Phi^4_d$ models, which for this reason have to be renormalised.
The dependence of the rate of convergence of approximations on the choice of the Besov exponent is observed in \cite{MZ}, where the authors consider the spatial semidiscretisation of $\Phi^4_2$ and bound the $B^{-\theta}_{\infty,\infty}$ norm of the error by $n^{-\theta+\eps}$ for any $\eps>0$, under the constraint $\theta\in(0,2/9)$. 
In \cite{HMat2,ZZ} the convergence of spatial semidiscretisations of $\Phi^4_3$ is shown, without rate.
\end{remark}

One might wonder if one could even further improve the rate by looking at even weaker norms. The following proposition rules this out, at least for this scheme: even testing the error with a single test function the rate does not exceed $1$.
\begin{prop}\label{prop:lowerbound}
Let $\psi=0$ and $F\equiv 0$. Then for any $\eps>0$ and $t>0$ one has 
\begin{equ}
n^{1+\eps}|\scal{\delta u_t-u^n_t,\delta e_1}_n|\,\,\underset{n\to\infty}{\longrightarrow}\,\,\infty\qquad\text{in probability.}
\end{equ}
\end{prop}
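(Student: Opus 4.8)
The plan is to compute the Fourier-mode-$1$ component of the error explicitly and identify it as a Gaussian random variable whose variance decays exactly like $n^{-1}$ (not faster). Since $F\equiv 0$ and $\psi=0$, both $u$ and $u^n$ are explicit Gaussian objects: $u_t=\int_0^t e^{(t-s)\Delta}\dd \xi_s$ in the continuous case, and $u^n$ solves the linear recursion \eqref{eq:approx classical form}, whose solution is a discrete stochastic convolution against the discrete heat semigroup $e^{s\Delta_n}$ (diagonalised by $\delta e_j$ with eigenvalues $\lambda^n_j$). Testing against $\delta e_1$ and using that $\delta e_1$ is an eigenfunction of $\Delta_n$, while $\scal{\delta u_t,\delta e_1}_n=\scal{u_t,e_1}$ (up to aliasing terms from modes $\equiv 1\bmod 2n$, which are negligible here), one writes
\begin{equ}
\scal{\delta u_t-u^n_t,\delta e_1}_n=\int_0^t e^{\lambda_1(t-s)}\scal{\dd\xi_s,e_1}-\sum_{kh\leq t}e^{?}\,(\text{discrete increments of }\xi\text{ against }\delta e_1),
\end{equ}
where the discrete semigroup acts as $(1+h\lambda^n_1)^{(t-s)/h}$, which approximates $e^{\lambda^n_1(t-s)}$, and $\lambda^n_1=\lambda_1\gamma^n_1$ differs from $\lambda_1$ by $O(n^{-2})$ by \eqref{eq:gamma-prop2}. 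The key point is that the white noise is being integrated against two \emph{different} kernels (a continuous exponential versus a discretised one, and continuous time-integration versus a piecewise-constant Riemann sum), and these kernels differ in $L^2([0,t])$ by an amount that is $\Theta(n^{-1})$ — the dominant contribution coming from the time-discretisation (the piecewise-constant vs.\ continuous integration of the noise), exactly the mechanism behind the $n^{-1/2}$ pointwise lower bound of \cite{DG}, but now only a single spatial mode contributes rather than a sum over $\sim n$ modes.

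Concretely, I would carry out the following steps. \textbf{Step 1:} Write closed-form expressions for $\scal{u_t,e_1}$ and $\scal{u^n_t,\delta e_1}_n$ as Wiener integrals against $\xi$ restricted to the strip $\T$-mode $e_1$ and its aliases; observe that mode-$1$ of $\xi$ on the grid is (up to the aliasing from modes $1, 1\pm 2n, 1\pm 4n,\dots$, whose total $L^2$ mass is $O(1)$ but whose \emph{contribution to the difference} is controlled) a one-dimensional complex Brownian motion $\beta_t$. \textbf{Step 2:} Express the difference as $\int_0^t \big(K(t,s)-K_n(t,s)\big)\dd\beta_s + (\text{aliasing remainder})$, where $K(t,s)=e^{\lambda_1(t-s)}$ and $K_n(t,s)=(1+h\lambda^n_1)^{\lfloor (t-s)/h\rfloor}$ is the piecewise-constant discrete kernel. \textbf{Step 3:} Show $\|K(t,\cdot)-K_n(t,\cdot)\|_{L^2([0,t])}^2 \asymp n^{-1}$: the upper bound $\lesssim n^{-1}$ follows from splitting into the semigroup-eigenvalue error ($O(n^{-2})$, hence negligible) and the time-quadrature error, where on each interval $[kh,(k+1)h]$ the kernel varies by $O(h)=O(n^{-2})$ so the local $L^2$-square error is $O(h^2\cdot h)=O(n^{-6})$ per step times $O(n^2)$ steps — wait, that gives $n^{-4}$, too small; so the dominant term is instead the aliasing, i.e.\ the discrete inner product $\scal{\cdot,\delta e_1}_n$ sees $\xi$ averaged over cells of width $(2n)^{-1}$, and this spatial averaging against $e_1$ loses $\Theta(n^{-2})$ in amplitude but — crucially — the \emph{missing} high modes $1+2n,\dots$ that get aliased in contribute a variance of exactly $\Theta(n^{-1})$ to $\scal{u^n_t,\delta e_1}_n$ that is absent from $\scal{u_t,e_1}$. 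Thus the variance of the difference is bounded \emph{below} by a positive constant times $n^{-1}$. \textbf{Step 4:} Conclude: the difference is a centred complex Gaussian with variance $\geq c\,n^{-1}$, so $n^{1+\eps}|\scal{\delta u_t-u^n_t,\delta e_1}_n| = n^{1/2+\eps}\cdot (n^{1/2}|\,\mathcal{N}\,|)$ with $n^{1/2}|\mathcal N|\geq c^{1/2}|Z|$ in distribution for a fixed nondegenerate Gaussian $Z$, hence blows up in probability.

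The main obstacle is \textbf{Step 3}, namely pinning down which effect dominates the variance of the error at a single mode and showing it is genuinely $\Theta(n^{-1})$ from below — one must be careful to separate the three sources of discretisation error (eigenvalue perturbation $\lambda^n_1$ vs.\ $\lambda_1$, the forward-Euler time-stepping replacing the exponential, and the spatial cell-averaging/aliasing in $\scal{\cdot,\delta e_1}_n$) and verify that the surviving term is exactly of size $n^{-1}$ and does not cancel. This is where the analysis of \cite{DG} must be adapted from a sum over all $\sim n$ modes (which there produces $n^{-1/2}$ in the pointwise/${L^2}$ sense) to the contribution of the single mode $e_1$, which individually contributes $n^{-1/2}$ to the \emph{standard deviation}, i.e.\ $n^{-1}$ to the variance. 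One should double-check that the leading-order cancellations that occur at low modes in the continuous vs.\ discrete heat kernels do not accidentally push the single-mode error below $n^{-1}$; the bound \eqref{eq:gamma-prop2} shows the eigenvalue error is $O(n^{-2})$ in amplitude and so contributes only $O(n^{-4})$ to the variance, confirming it is the aliasing/quadrature term, of order $n^{-1}$, that governs the rate and gives the matching lower bound.
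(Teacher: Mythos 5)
Your overall strategy matches the paper's: since $F\equiv 0$ and $\psi=0$, the error mode $\langle\delta u_t-u^n_t,\delta e_1\rangle_n$ is an explicit centred Gaussian, and the claim follows once you lower-bound its variance by the right power of $n$. The paper also does this (via the decomposition $\langle \delta O_t- O^n_t, \delta e_1\rangle_n= \langle O_t- \iota O^n_t, e_{1}\rangle +\sum_{j\neq 0} \langle O_t, e_{1+ j2n}\rangle$ together with the elementary inequality $|c_1z_1-c_2z_2|\geq(c_1\wedge c_2)|z_1-z_2|$ for unit vectors $z_1,z_2$ and positive reals $c_1,c_2$).

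However, your Step~3 contains a genuine error: you assert that the variance of the mode-$1$ error is $\Theta(n^{-1})$, whereas it is in fact $\Theta(n^{-2})$. This is visible in several places. (i) You first say the kernels differ by $\Theta(n^{-1})$ in $L^2([0,t])$ but then claim the \emph{square} $\|K-K_n\|_{L^2}^2\asymp n^{-1}$, which is internally inconsistent. (ii) You claim the aliased modes $e_{1+2jn}$ contribute variance $\Theta(n^{-1})$, but each such mode contributes $\sim(jn)^{-2}$ to the variance (the $L^2([0,t])$ norm squared of $e^{\lambda_{1+2jn}(t-s)}$ is $\sim(jn)^{-2}$), and summing over $j\neq 0$ gives $\Theta(n^{-2})$. (iii) Your closing remark that a single mode contributes $n^{-1/2}$ to the \emph{standard deviation} gets the bookkeeping backwards: the pointwise error has standard deviation $n^{-1/2}$ precisely because $\sim n$ modes each contribute $\sim n^{-2}$ to the variance, so one mode contributes $n^{-1}$ to the standard deviation. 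The paper's actual lower bound is
\begin{equ}
\mathbb{E}\big|\langle O_t-\iota O^n_t, e_1\rangle\big|^2\;\gtrsim\;\int_\T\big|\bar e_1(y)-\bar e_1(\rho_n(y))\big|^2\,dy\;\asymp\; n^{-2},
\end{equ}
so the dominant effect is the spatial displacement $y\mapsto\rho_n(y)$ of size $\sim n^{-1}$ in the unit-modulus phase (the eigenvalue error and time quadrature are both lower order, as you correctly observe). The statement of the proposition ($n^{1+\eps}$, not $n^{1/2+\eps}$) is therefore sharp, and your claimed variance $\geq c\,n^{-1}$ would, if true, yield the strictly stronger and in fact \emph{false} conclusion $n^{1/2+\eps}|\langle\cdot\rangle|\to\infty$. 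So while your final conclusion happens to follow from the true rate $n^{-2}$, the quantitative claim at the heart of your Step~3 is wrong and needs to be corrected to $\Theta(n^{-2})$.
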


\section{Various tools}

\subsection{Kernels and convolutions}
The continuous and discrete heat kernels are defined as follows.
For $(t,x,y)\in (0,\infty)\times \T^2$, define the kernel 
\begin{equ}
p_t(x,y)
= \sum_{j \in \ZZ} e^{ \lambda_j t } e_j(x-y)\  , 
\end{equ}
and for $t>0$ the corresponding operators
\begin{equ}
f\mapsto \cP_t f,\qquad \cP_t f(x)=\int_\T p_t(x,y)f(y)\,dy,\,\,x\in\T,
\end{equ}
which are known to satisfy the semigroup property $\cP_t(\cP_s f)=\cP_{t+s}f$.
By convention, we understand $p_0(x,y)$ to be the Dirac-$\delta$ distribution, and correspondingly $\cP_0$ to be the identity operator.

Similarly, for $(t,x,y)\in \Lambda_n\times (\Pi_n)^2$, define the discrete kernel
\begin{equ}
p^n_t(x,y)=\sum_{j=-n}^{n-1}(1+h\lambda_j^n)^{t/h} e_j(x-y) \ ,
\end{equ} 
and for $t\in\Lambda_n$ the corresponding operators
\begin{equ}
f\mapsto \cP^n_t f,\qquad \cP^n_t f(x)=\frac{1}{2n}\sum_{y\in\Pi_n}p^n_t(x,y)f(y),\,\,x\in\Pi_n,
\end{equ}
which also satisfy the semigroup property $\cP_t^n(\cP_s^n f)=\cP_{t+s}^nf$, see \cite[Prop.~2.1.6]{Mate}.
Note that the argument of the operator $\cP^n_t$ can either be an element of $C(\T)$ or $C(\Pi_n)$, we do not distinguish these cases in the notation.
One furthermore has that the action of $\cP_t^n$ on the Fourier modes is similarly exponential as that of $\cP_t$: more precisely, there exists a $\kappa=\kappa(c)>0$ such that for all $t\in\Lambda_n$ one has
\begin{equ}\label{eq:exp-discrete}
|1+h\lambda^n_j|^{t/h}\leq e^{-\kappa t j^2},
\end{equ}
see \cite[Eq.~(2.9)]{Mate}.
We rephrase equations \eqref{eq:general} and \eqref{eq:approx classical form} in mild formulations.
Indeed, for \eqref{eq:general} we \emph{define} a mild solution of it as a function $u:\Omega\times[0,\infty)\times\T$ that is almost surely continuous in $t,x$, is measurable with respect to the product of the predictable and Borel $\sigma$-algebras, and satisfies for $(t,x)\in [0,\infty)\times \T$ almost surely
\begin{equ}\label{eq:main in mild form}
u_t(x)=\mathcal{P}_t \psi(x)+\int_0^t\mathcal{P}_{t-s} F (u_s)(x) ds+\int_0^t\int_\T p_{t-s}(x-y)\xi (dy, ds).
\end{equ}
As for the approximation $u^n$, it is already uniquely defined by \eqref{eq:approx classical form}, but one can derive a similar mild form of it: by \cite[Eq.~(2.14)]{Mate} one has for $(t,x)\in \Lambda_n\times \Pi_n$ almost surely
\begin{equ}
u^n_{ t}( x)
=\mathcal{P}^{n}_{ t} \psi( x)+\int_0^{ t} \mathcal{P}^{n}_{\kappa_n( t-s)} F (u^n_{\kappa_n(s)})( x)\,ds
+\int_0^{ t}\int_\T p_{\kappa_n( t-s) }^{n}( x,\rho_n(y))\,\xi (dy, ds), \label{eq:approx in mild form}
\end{equ}
where $\kappa_n(t):=\lfloor th^{-1}\rfloor h$ and $\rho_n(x)=\lfloor x2n\rfloor (2n)^{-1}$.

Important reference objects are the solutions of the linear equation with $0$ initial data, they are denoted by
\begin{equ}
O_t(x)=\int_0^t\int_\T p_{t-s}(x-y)\xi (dy, ds)
\end{equ}
for $(t,x)\in(0,\infty)\times\T$, and by
\begin{equs}
O^n_t(x)
= \int_0^{ t}\int_\T p_{\kappa_n( t-s) }^{n}( x,\rho_n(y))\,\xi (dy, ds) 
\end{equs}
for $(t,x)\in \Lambda_n\times \Pi_n$, respectively.
Unraveling the definitions, one finds 
\begin{equs}
\scal{ O_t, e_\ell} &=\bigscal{ \int_0^t\int_\T \sum_{k \in \ZZ} e^{- 4 \pi^2 k^2 (t-s) } e_k(\cdot) \bar{e}_k(y)\xi (dy, ds), e_\ell}\\
&= \int_0^t\int_\T e^{- 4 \pi^2 \ell^2 (t-s) }  \bar{e}_\ell(y)\xi (dy, ds) \label{eq:O Fourier}
\end{equs}
for any $\ell\in \Z$
and
\begin{equs}
\langle \iota O^n_t, e_\ell\rangle =\langle  O^n_t,\delta e_\ell\rangle_{n}
&=\bigscal{\int_0^t\int_\T \sum_{k=-n}^{n-1}(1+h\lambda_k^n)^{(\kappa_n (t-s)/h)}\delta e_k(\cdot)\bar{e}_k(\rho_n(y))\xi (dy, ds),
\delta e_\ell}_n
\\
&= \int_0^t\int_\T (1+h\lambda_\ell^n)^{(\kappa_n (t-s)/h)} \overline{e}_\ell(\rho_n(y))  \xi (dy, ds)\label{eq:On Fourier}
\end{equs}
for any $\ell\in\{-n,\ldots,n-1\}$. For any other $\ell$, clearly $\langle \iota O^n_t, e_\ell\rangle=0$.

\subsection{Properties of discrete and continuous function spaces}\label{sec:spaces}
In this section we establish properties of the discrete Besov spaces analogous to the continuous ones. 
We start with two lemmata which substantiates the definition of discrete Besov spaces, their proof can be found in the Appendix.
\begin{lemma}\label{lemma:testing against test functions}
For any $\alpha<0$ there exists a constant $N=N(\alpha,\phi^0)$ such that with $r=\min\{k\in \mathbb{N}\ : \ k>-\alpha\}$ the bound
\begin{equ}
\sup_{x\in\Pi_n}\sup_{\lambda\in (n^{-1},1)}\frac{\big|\scal{f,\delta\varphi_x^\lambda}_n\big|}{\lambda^{\alpha}}\leq N \|\varphi\|_{C^{r}(\T)} \|f\|_{B^{\alpha}_{\infty,\infty}(\Pi_n)} 
\end{equ}
holds uniformly over $\varphi\in C_c^{\infty}(\T)$ such that $\supp\varphi\subset B_{1/3}$, $n\in \mathbb{N}$, and $f\in C(\Pi_n)$. 
\end{lemma}

\begin{lemma}\label{lem:besov-holder}
For any $\alpha\in(0,1)$ there exists a constant $N=N(\alpha,\phi^0)$ such that for all $n\in\N$, $f\in C(\Pi_n)$ one has the bounds.
\begin{equ}\label{eq:besov-holder}
\|f\|_{B^{\alpha}_{\infty,\infty}(\Pi_n)}\leq N\|f\|_{C^\alpha(\Pi_n)},
\qquad
\|f\|_{C^\alpha(\Pi_n)}\leq N \|f\|_{B^{\alpha}_{\infty,\infty}(\Pi_n)} \ .
\end{equ}
\end{lemma}

\

Next, recall the following classical Theorem \cite[Thm.~II.7.10]{Zyg03}.
\begin{theorem}\label{theorem classical}
Denote by $\delta_{x}$ the Dirac measure at $x\in \mathbb{R}$ and let $\omega_{m}= \frac{1}{m}\sum_{j=0}^{m-1} \delta_{\frac{ j}{m}}$.  There exists a constant $N$ such that for any complex polynomial $P(z)= \sum_{k=0}^n c_k z^k$ and all $p\in [1,\infty]$
$$\Big(\int_0^{1} |P(e^{2\pi it})|^p d\omega_{n+1}(t) \Big)^{\frac{1}{p}}\leq N\Big(\int_0^{1} |P(e^{2\pi it})|^p dt \Big)^{\frac{1}{p}} \ .$$
Similarly, for each $p\in (1,\infty)$, there exists a constant $N=N(p)$ such that
$$\Big(\int_0^{1} |P(e^{2\pi it})|^p dt \Big)^{\frac{1}{p}}\leq N\Big(\int_0^{1} |P(e^{2\pi it})|^p d\omega_{n+1}(t) \Big)^{\frac{1}{p}}$$ uniformly over polynomials of the form $P(z)= \sum_{k=0}^n c_k z^k$ .
\end{theorem}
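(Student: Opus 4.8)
\emph{Proof sketch.} This is a classical Marcinkiewicz--Zygmund inequality, so we only indicate the argument. Set $m=n+1$, identify $P$ with the trigonometric polynomial $f(t)=P(e^{2\pi it})$ (Fourier support in $\{0,1,\dots,n\}$), write $\|\cdot\|_{L^p}$ for $L^p(\T)$ with Lebesgue measure, and $\|a\|_{\ell^p_m}=\big(\tfrac1m\sum_{j=0}^{m-1}|a_j|^p\big)^{1/p}$, so that the two quantities in the statement are $\|f\|_{L^p}$ and $\|(f(j/m))_j\|_{\ell^p_m}$.

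For the first inequality the plan is an averaging argument with a de la Vallée-Poussin kernel $V=2K_{2n-1}-K_{n-1}$ (Fejér kernels), chosen so that $\widehat V\equiv1$ on $\{-n,\dots,n\}$, hence $f=f\ast V$, while $\|V\|_{L^1}\le3$ and $|V(\theta)|\lesssim\min\big(n,(n\theta^2)^{-1}\big)$ for $\theta$ in the circle. From $f(j/m)=\int f(s)V(j/m-s)\,ds$ and Jensen's inequality (against the probability measure $|V(j/m-s)|\,ds/\|V\|_{L^1}$) one gets $|f(j/m)|^p\le\|V\|_{L^1}^{p-1}\int|f(s)|^p|V(j/m-s)|\,ds$; averaging over $j$ and exchanging sum and integral reduces everything to the uniform Riemann-sum bound $\sup_s\tfrac1m\sum_j|V(j/m-s)|\lesssim1$, which follows from the kernel decay (the nearest node contributes $O(1)$, the remaining ones sum to $\sum_{l\ge1}O(l^{-2})$, using $m\sim n$). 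The case $p=\infty$ is immediate.

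For the second inequality I would proceed by duality, to avoid inverting the logarithmically $L^1$-unbounded Dirichlet kernel directly. Let $S$ map $(a_j)_{j=0}^{m-1}$ to the interpolating polynomial $\sum_j a_j\widetilde D(\cdot-j/m)$ of degree $\le m-1$, where $\widetilde D(t)=\tfrac1m\sum_{k=0}^{m-1}e^{2\pi ikt}$; since $\widetilde D(j'/m-j/m)=\delta_{jj'}$, the operator $S$ inverts sampling on polynomials of degree $\le n=m-1$, so the claim is exactly $\|S\|_{\ell^p_m\to L^p}\lesssim_p1$. A short computation gives $\langle Sa,g\rangle_{L^2(\T)}=\tfrac1m\sum_j a_j\,\overline{(Qg)(j/m)}$, i.e. the adjoint is $S^*g=\big((Qg)(j/m)\big)_j$, where $Q$ is the Fourier projection onto frequencies $\{0,\dots,m-1\}$. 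Hence $\|S^*g\|_{\ell^{p'}_m}$ equals the discrete $L^{p'}$-norm of the polynomial $Qg$ (degree $\le n$), which by the \emph{already proven} first inequality is $\lesssim\|Qg\|_{L^{p'}}$, and finally $\|Qg\|_{L^{p'}}\lesssim_p\|g\|_{L^{p'}}$ since $Q$ is the projection onto a frequency interval. Combining, $\|S^*\|_{L^{p'}\to\ell^{p'}_m}\lesssim_p1$, and applying $S$ to the sampling vector of $f$ (for which $SRf=f$ because $\deg f\le m-1$) yields the claim.

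The only non-elementary ingredient, and the step I expect to carry the content, is the $L^{p'}$-boundedness of the frequency-interval projection $Q$ with a constant independent of $m$ --- this is the M.~Riesz theorem on the conjugate function (equivalently, $L^{p'}$-boundedness of the Hilbert transform). It is also precisely the reason the second inequality must fail for $p\in\{1,\infty\}$, where $Q$ is unbounded; everything else in the argument is soft (Jensen, H\"older, duality) together with the explicit Fej\'er / de la Vallée-Poussin kernel estimates.
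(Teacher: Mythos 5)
The paper does not prove Theorem \ref{theorem classical}: it is stated as a known result and cited directly to Zygmund's book (\cite[Thm.~II.7.10]{Zyg03}), so there is nothing in the paper to compare your argument against line by line. Taking your sketch on its own terms, it is correct and gives a clean self-contained proof of both Marcinkiewicz--Zygmund inequalities. For the forward bound, the reproducing identity $f=f\ast V$ with a de la Vall\'ee-Poussin kernel $V=2K_{2n-1}-K_{n-1}$ (so $\widehat V\equiv 1$ on $\{-n,\dots,n\}$, $\|V\|_{L^1}\le 3$) combined with Jensen against the probability measure $|V(j/m-\cdot)|/\|V\|_{L^1}$ reduces the estimate to the uniform Riemann-sum bound $\sup_s \tfrac1m\sum_j |V(j/m-s)|\lesssim 1$, which indeed follows from $|V(\theta)|\lesssim\min(n,(n\theta^2)^{-1})$ and $m\sim n$: the closest node contributes $O(1)$ and the $l$-th closest $O(l^{-2})$. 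For the reverse bound, your duality computation is correct: with $\widetilde D(t)=\tfrac1m\sum_{k=0}^{m-1}e^{2\pi ikt}$ and $S a=\sum_j a_j\widetilde D(\cdot-j/m)$, one has $S\circ(\text{sampling})=\mathrm{id}$ on polynomials of degree $\le m-1$, and $S^*g=((Qg)(j/m))_j$ with $Q$ the Fourier projection onto $\{0,\dots,m-1\}$, so $\|S^*\|_{L^{p'}\to\ell^{p'}_m}\lesssim_p 1$ follows from the already-proved forward inequality applied to $Qg$ plus the $m$-uniform $L^{p'}$-boundedness of $Q$ (M.~Riesz, via writing $Q$ as a difference of two modulated Riesz projections). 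Your observation that the restriction to $p\in(1,\infty)$ in the reverse direction enters exactly through the Riesz projection step is also the right way to see why that restriction is sharp. In short: the argument is complete and correct; the only caveat worth flagging is that the paper itself treats this as a black-box citation rather than proving it, so if one wanted a self-contained exposition, your sketch (or a pointer to a modern reference carrying it out) would be an appropriate substitute for the Zygmund citation.
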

A simple consequence of Theorem~\ref{theorem classical} is a kind the equivalence of discrete and continuous $L^p$ norms (which can be used to deduce equivalence of Besov norms, but we do not need this).
\begin{prop}\label{prop lp equivalence}
For any $p\in (1,\infty)$ the maps $\iota: L^p(\Pi_n) \to L^p (\T)$ are bounded uniformly in $n\in\N$. Furthermore, the maps $\delta: \iota L^p (\T) \to L^p(\Pi_n)$ are bounded uniformly in $n\in \mathbb{N}$ and $p\in [1, \infty]$.
\end{prop}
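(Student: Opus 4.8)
The plan is to deduce both bounds from the classical Marcinkiewicz-type result Theorem~\ref{theorem classical} by identifying $\iota f$, for $f \in C(\Pi_n)$, with a trigonometric polynomial of degree controlled by $2n$, and then matching the discrete $L^p(\Pi_n)$-average with an integral against the atomic measure $\omega_{2n}$. Concretely, write $f = \sum_{j=-n}^{n-1} a_j \delta e_j$ with $a_j = \scal{f,\delta e_j}_n$, so that $\iota f = \sum_{j=-n}^{n-1} a_j e_j$. Multiplying by $e_n = e^{2\pi i n x}$ (which has modulus $1$, hence does not change any $L^p$ norm on $\T$ or on $\Pi_n$), the function $e_n \cdot \iota f = \sum_{j=-n}^{n-1} a_j e_{j+n} = \sum_{k=0}^{2n-1} a_{k-n} e_k$ is of the form $P(e^{2\pi i x})$ for a complex polynomial $P(z) = \sum_{k=0}^{2n-1} a_{k-n} z^k$ of degree $\le 2n-1$, i.e. with $2n = (2n-1)+1$ coefficients. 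Hence Theorem~\ref{theorem classical} applies with ``$n+1$'' there equal to $2n$, and $\omega_{2n} = \frac{1}{2n}\sum_{j=0}^{2n-1}\delta_{j/(2n)}$ is exactly the normalised counting measure on the grid $\Pi_n = \{0,(2n)^{-1},\dots,(2n-1)(2n)^{-1}\}$.

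With this identification, for $p \in (1,\infty)$ the first inequality of Theorem~\ref{theorem classical} gives
\begin{equ}
\|f\|_{L^p(\Pi_n)} = \Big(\int_0^1 |P(e^{2\pi i t})|^p\, d\omega_{2n}(t)\Big)^{1/p} \le N \Big(\int_0^1 |P(e^{2\pi i t})|^p\, dt\Big)^{1/p} = N\|e_n\cdot\iota f\|_{L^p(\T)} = N\|\iota f\|_{L^p(\T)},
\end{equ}
with $N$ independent of $n$; this is precisely uniform boundedness of $\delta : \iota L^p(\T) \to L^p(\Pi_n)$, since any element of $\iota L^p(\T)$ is $\iota f$ for some $f \in C(\Pi_n)$ and $\delta \iota f = f$. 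For the statement that $\delta$ is in fact bounded uniformly in $p \in [1,\infty]$ as well, I would observe that the first inequality of Theorem~\ref{theorem classical} already holds for all $p \in [1,\infty]$, so the same computation works verbatim; the $p = \infty$ case is also immediate since $\|f\|_{L^\infty(\Pi_n)} = \max_{x\in\Pi_n}|\iota f(x)| \le \|\iota f\|_{L^\infty(\T)}$ directly, no theorem needed. Conversely, for $p \in (1,\infty)$ the second inequality of Theorem~\ref{theorem classical} gives $\|\iota f\|_{L^p(\T)} = \|e_n\cdot\iota f\|_{L^p(\T)} \le N(p)\|f\|_{L^p(\Pi_n)}$, which is uniform boundedness of $\iota : L^p(\Pi_n) \to L^p(\T)$.

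The only genuinely delicate point is bookkeeping rather than mathematics: making sure the degree count and the index of the atomic measure line up, i.e. that a function supported on the $2n$ Fourier modes $\{-n,\dots,n-1\}$ becomes, after the harmless twist by $e_n$, a polynomial with exactly $2n$ coefficients so that Theorem~\ref{theorem classical}'s measure $\omega_{n+1}$ becomes $\omega_{2n}$, the uniform measure on $\Pi_n$. One should also note that $\iota f$ is generally complex-valued (Remark~\ref{rem:symm2}), so it is essential that Theorem~\ref{theorem classical} is stated for complex polynomials, which it is. I would write the argument for a single generic $n$, emphasise that the constants $N$ and $N(p)$ come from Theorem~\ref{theorem classical} and carry no $n$-dependence, and remark that one may replace $2n$ by $n+1 \to 2n$ freely since the measures in Theorem~\ref{theorem classical} are indexed by the number of coefficients. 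This completes the proof.
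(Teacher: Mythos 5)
Your proposal is correct and uses essentially the same reduction as the paper: identify $\iota f$ with a complex polynomial $P(z)=\sum_{k=0}^{2n-1}\langle f,\delta e_{k-n}\rangle_n z^k$ of degree $\le 2n-1$ via a modulus-one twist (the paper factors out $\delta e_{-n}$, you multiply by $e_n$ — same thing), note that $|f(x)|=|P(e^{2\pi ix})|$ on $\Pi_n$ and $|\iota f(x)|=|P(e^{2\pi ix})|$ on $\T$, and apply Theorem~\ref{theorem classical} with $\omega_{2n}$ being the normalised counting measure on $\Pi_n$. The only difference is cosmetic bookkeeping, which you spell out more explicitly than the paper does.
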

\begin{proof}
It suffices to apply Theorem \ref{theorem classical} to the polynomial $P(z)=\sum_{k=0}^{2n-1} \langle f, \delta e_{k-n}\rangle_n z^k$
since one has
\begin{equs}
|f(x)|&= \Big|\sum_{k=-n}^{n-1} \langle f, \delta e_k\rangle_n \delta e_k (x)\Big|
\\
&= \Big|\sum_{k=0}^{2n-1} \langle f, \delta e_{k-n}\rangle_n \delta e_{k-n}(x)\Big|
\\
&=\Big|\sum_{k=0}^{2n-1} \langle f, \delta e_{k-n}\rangle_n (\delta e_1)^k\Big|= \big|P(e^{2\pi i x})\big|
\end{equs}
for $x\in \Pi_n$  and similarly $|\iota f(x)|=  |P(e^{2\pi i x})|$ for $x\in \T$ \ .
\end{proof}

\begin{remark}
Note that in the case $p=2$, the above proposition follows immediately from the fact that $\{\delta e_k\}_{k=-n,\ldots,n-1}$ ($\{e_k\}_{k\in\Z}$, resp.) is orthonormal basis of $L^2(\Pi_n;\C)$ ($L^2(\T;\C)$, resp.). 
\end{remark}

\begin{prop}\label{prop:equivalence}
There exists an constant $N$ (depending only on the choice of $\phi^0$) such that for all $p\in[1,\infty]$, $n\in\N$, $f\in C(\Pi_n)$ one has the bound 
\begin{equ}\label{first}
\|f\|_{B^{0}_{p,\infty}(\Pi_n)} \leq N \|f\|_{L^p(\Pi_n)},\qquad \|f\|_{L^p(\Pi_n)}\leq \|f\|_{B^{0}_{p,1}(\Pi_n)}.
\end{equ}
Further, for any $\eps>0$ there exists $N=N(\eps)$ such that for all $\alpha\in\R$, $p\in[1,\infty]$, and $f\in C(\Pi_n)$ one has
\begin{equ}\label{second}
\|f\|_{B^{\alpha}_{p,1}(\Pi_n)}  \leq N \|f\|_{B^{\alpha+\eps}_{p,\infty}(\Pi_n)}.
\end{equ}
\end{prop}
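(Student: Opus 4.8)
The plan is to reduce everything to the $L^p$-behavior of the Littlewood--Paley projectors $f\mapsto f^{[j],n}$ on $C(\Pi_n)$, proving a single uniform bound $\|f^{[j],n}\|_{L^p(\Pi_n)}\lesssim \|f\|_{L^p(\Pi_n)}$ (uniform in $j$, $n$, $p$) and then assembling the three claimed inequalities from it. For the projector bound, the key observation is that $f^{[j],n}$ is a discrete convolution: writing $f^{[j],n}=f\ast_n K_j^n$ with $K_j^n=\sum_{k=-n}^{n-1}\phi^j_{\rho_n}(k)\,\delta e_k$, Young's inequality for $\ast_n$ gives $\|f^{[j],n}\|_{L^p(\Pi_n)}\le \|K_j^n\|_{L^1(\Pi_n)}\|f\|_{L^p(\Pi_n)}$, so it suffices to bound $\|K_j^n\|_{L^1(\Pi_n)}$ uniformly. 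Here I would use Remark~\ref{rem:PL}: for $j\le J_n$ the multiplier $\phi^j_{\rho_n}$ is supported in $B_{n/\rho_0}$, hence $K_j^n$ is the restriction to $\Pi_n$ of the smooth periodic kernel $\sum_{k\in\Z}\phi^j_{\rho_n}(k)e_k$, which is a dyadic rescaling of one fixed smooth bump; its $L^1(\T)$ norm is bounded independently of $j,n$ by the standard Schwartz-decay argument, and passing from $\|\cdot\|_{L^1(\T)}$ to $\|\cdot\|_{L^1(\Pi_n)}$ costs nothing by Proposition~\ref{prop lp equivalence} (the $\delta$ half, which is uniform in $p$ including $p=1$). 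The top block $j=J_n+1$ is handled identically using the rescaled smooth function $\tilde\phi^1_1$ from Remark~\ref{rem:PL}.

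With the uniform projector bound in hand, the first inequality in \eqref{first} is immediate: $\|f\|_{B^0_{p,\infty}(\Pi_n)}=\sup_j\|f^{[j],n}\|_{L^p(\Pi_n)}\lesssim \|f\|_{L^p(\Pi_n)}$. For the second inequality in \eqref{first}, I would use that $f=\sum_j f^{[j],n}$ (a finite sum over $j=0,\dots,J_n+1$, since the $\phi^j_{\rho_n}$ form a partition of unity on the relevant frequency range — this should be checked from the definition of $\phi^j_\rho$, the point being that $\sum_{j\ge 0}\phi^j_\rho\equiv 1$ on $[0,\infty)$ and the modes $k\in\{-n,\dots,n-1\}$ only see blocks up to $J_n+1$), whence by the triangle inequality $\|f\|_{L^p(\Pi_n)}\le\sum_j\|f^{[j],n}\|_{L^p(\Pi_n)}=\|f\|_{B^0_{p,1}(\Pi_n)}$; note the constant here is exactly $1$, matching the statement.

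For \eqref{second}, the argument is purely the discrete $\ell^q$ embedding combined with the fact that the sum over $j$ is \emph{finite}, of length $J_n+2\le \log n+2$. Indeed
\[
\|f\|_{B^{\alpha}_{p,1}(\Pi_n)}=\sum_{j=0}^{J_n+1} 2^{\alpha j}\|f^{[j],n}\|_{L^p(\Pi_n)}
\le\Big(\sum_{j=0}^{J_n+1}2^{-\eps j}\Big)\sup_{j}2^{(\alpha+\eps)j}\|f^{[j],n}\|_{L^p(\Pi_n)}
\le N(\eps)\,\|f\|_{B^{\alpha+\eps}_{p,\infty}(\Pi_n)},
\]
with $N(\eps)=\sum_{j\ge0}2^{-\eps j}<\infty$ depending only on $\eps$; the finiteness of the range of $j$ is what makes this work with no loss, and no dependence on $n$ or $\alpha$ enters. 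The only mildly delicate point in the whole proof is the uniform-in-$p$ (down to $p=1$) control of $\|K_j^n\|_{L^1(\Pi_n)}$ — I expect this to be the main obstacle, since the $\iota$ half of Proposition~\ref{prop lp equivalence} is only stated for $p\in(1,\infty)$; however one only needs the $\delta$ direction $\delta:\iota L^1(\T)\to L^1(\Pi_n)$, which Proposition~\ref{prop lp equivalence} does provide uniformly in $p\in[1,\infty]$, applied to the smooth periodic kernel $\iota K_j^n=\sum_{k\in\Z}\phi^j_{\rho_n}(k)e_k$, so the difficulty is only apparent. (Alternatively, for $p=1$ and $p=\infty$ one can bound $\|K_j^n\|_{L^1(\Pi_n)}$ directly by a Riemann-sum comparison with $\|\,\cdot\,\|_{L^1(\T)}$ using the smoothness of the kernel, avoiding Proposition~\ref{prop lp equivalence} altogether.)
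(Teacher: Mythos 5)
Your proposal is correct and follows essentially the same route as the paper: you express $f^{[j],n}$ as a discrete convolution against a kernel whose $L^1(\Pi_n)$ norm is bounded uniformly in $j,n$ by passing through the continuous $L^1$ norm via Proposition~\ref{prop lp equivalence} and Remark~\ref{rem:fourier}, then obtain \eqref{first} from Young's inequality and the triangle inequality, and \eqref{second} from the geometric series $\sum_j 2^{-\eps j}$. One small expository slip: you suggest twice that the finiteness of the range of $j$ (namely $j\le J_n+1$) is what makes \eqref{second} work ``with no loss,'' but your own computation shows this is irrelevant — the convergence of $\sum_{j\ge 0}2^{-\eps j}$ is all that is used, exactly as in the paper.
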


Before proceeding to the proof, we make the following useful observation, used several times in the sequel.
\begin{remark}\label{rem:fourier}
Denote the usual Fourier transform by
\begin{equ}
\scF(f)(z)=\int_\R f(x)e^{2\pi i z x}\,dx.
\end{equ}
Then (as one can see from e.g. the Poisson summation formula) 
if $f\in L^1(\mathbb{R})$ is a smooth function on $\R$ and $g$ is a smooth function on $\T$, such that 
$\mathcal{F}f(k)= \langle g, e_k \rangle$
for all $k\in\Z$, then one has $g(x)= \sum_{n\in \mathbb{Z}} f(x + n)$ on $\T$. 
In particular for any $\alpha\geq 0$,
$$\big\||\cdot|^\alpha g(\cdot) \big\|_{L^1(\T) }\leq \big\| |\cdot|^\alpha f(\cdot)\big\|_{L^1(\mathbb{R})}\, $$
and
$$\big\||\cdot|^\alpha g(\cdot) \big\|_{L^1(\Pi_n) }\leq \big\| |\cdot|^\alpha f(\cdot)\big\|_{L^1((2n)^{-1}\mathbb{Z})}\ ,$$
where here, and in the sequel, for $\gamma>0$ the space $\gamma\mathbb{Z}$ is understood to be equipped with the measure $\frac{1}{\gamma}\sum_{x\in \gamma\mathbb{Z}}  \delta_x$.
\end{remark}

\begin{proof}[Proof of Proposition \ref{prop lp equivalence}.]
First we write
\begin{equ}\label{aaaa}
f^{[j],n}= \sum_{k\in\Z} \hat\phi^j_{\rho_n}(k)\langle f, \delta e_k\rangle_n \delta e_k =  \Big( \sum_{k\in\Z} \hat\phi^j_{\rho_n}(k)\delta e_k \Big)\ast_n f ,
\end{equ}
where $\hat \phi^j_{\rho_n}=\phi^j_{\rho_n}$ for $j\neq J_n+1$ and $\hat \phi^{J_n+1}_{\rho_n}=\tilde \phi^{J_n+1}_{\rho_n}$, see Remark \ref{rem:PL}.
Thus the first inequality of \eqref{first} follows by Young's convolution inequality and the fact that by Proposition \ref{prop lp equivalence} and Remark \ref{rem:fourier} we have the following uniform (in $n$ and $j$) $L^1$ bound:
\begin{equ}\label{eq:disc-to-cont-fourier}
\big\|\sum_{k\in\Z} \hat\phi^j_{\rho_n}(k)\delta e_k\big\|_{L^1(\Pi_n)}\leq \big\|\sum_{k\in\Z} \hat\phi^j_{\rho_n}(k)e_k\big\|_{L^1(\T)}\leq  \| \mathcal{F}^{-1} (\hat\phi^j_{\rho_n}) \|_{L^1(\mathbb{R})}= \| \mathcal{F}^{-1} (\hat\phi^1_1) \|_{L^1(\mathbb{R})}.
\end{equ}
The second inequality in \eqref{first} follows simply from the triangle inequality, since $f= \sum_{j\in\N} f^{[j],n}$.
The inequality \eqref{second} follows by observing that for any sequence $\{a_j\}_{j\in \mathbb{N}}$ one has
$$ \sum_{j\in\N} 2^{\alpha j} |a_j| \lesssim_\eps \sup_{j\in\N} 2^{(\alpha+\eps) j} |a_j|\ .$$
\end{proof}

\begin{lemma}\label{lem:discrete-Besov-embedding}
Let $1\leq p_1\leq p_2\leq\infty$, $1\leq q_1\leq q_2\leq \infty$, and $\alpha\in \R$.
Then there exists a constant $N=N(p_1,p_2)$ such that for all $f\in C(\Pi_n)$ one has the bound
\begin{equ}
\|f\|_{B^{\alpha-(1/p_1-1/p_2)}_{p_2,q_2}(\Pi_n)}\leq N\|f\|_{B^{\alpha}_{p_1,q_1}(\Pi_n)}.
\end{equ}
\end{lemma}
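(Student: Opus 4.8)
The plan is to mimic the classical proof of the Besov embedding $B^{\alpha}_{p_1,q_1}\hookrightarrow B^{\alpha-(1/p_1-1/p_2)}_{p_2,q_2}$, checking that every ingredient has a discrete analogue already available in the excerpt. The two ingredients are: (i) a Bernstein-type inequality allowing one to trade integrability for a power of the frequency on a single Littlewood-Paley block, and (ii) monotonicity of $\ell^q$ norms in $q$. For (ii) there is nothing to do: $\|\cdot\|_{\ell^{q_2}}\leq\|\cdot\|_{\ell^{q_1}}$ for $q_1\leq q_2$, so it suffices to prove the statement with $q_1=q_2=:q$ (in fact with $q_1=q_2=\infty$ after also using $\|\cdot\|_{\ell^{q_2}}\le\|\cdot\|_{\ell^\infty}$ — but keeping a general $q$ costs nothing). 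So the real content is a block-wise estimate of the form
\begin{equ}
\|f^{[j],n}\|_{L^{p_2}(\Pi_n)}\lesssim 2^{j(1/p_1-1/p_2)}\|f^{[j],n}\|_{L^{p_1}(\Pi_n)}
\end{equ}
valid uniformly in $n$ and in $j\in\{0,\dots,J_n+1\}$, with the implicit constant depending only on $p_1,p_2$. Granting this, multiply by $2^{j(\alpha-(1/p_1-1/p_2))}$, take $\ell^{q}$ norms in $j$, and read off the claim; I would write that final line out explicitly since it is short.

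First I would prove the block-wise estimate. The naive bound from \eqref{eq:reverse-discrete-Lp} gives $\|f^{[j],n}\|_{L^{p_2}(\Pi_n)}\leq (2n)^{1/p_1-1/p_2}\|f^{[j],n}\|_{L^{p_1}(\Pi_n)}$, which has the wrong power ($2n$ rather than $2^j$) and is therefore only good for the top block $j=J_n+1$, where $2^{J_n+1}=2n/\rho_n\asymp n$ and the two powers agree up to constants. For $j\le J_n$ I would instead argue exactly as in the proof of Proposition~\ref{prop:equivalence}: writing $f^{[j],n}=K^{[j],n}\ast_n f^{[j],n}$ for a suitable reproducing kernel $K^{[j],n}=\sum_{k\in\Z}\chi^j_{\rho_n}(k)\,\delta e_k$, where $\chi^j$ is a smooth bump that equals $1$ on the support of $\phi^j$ (a "fattened" version of $\phi^j$, still a rescaling of a fixed smooth function so that $\supp\chi^j_{\rho_n}\subset B_{n/\rho_0}$ and the reasoning of Remark~\ref{rem:PL} applies), Young's inequality for $\ast_n$ with exponents $1+1/p_2=1/r+1/p_1$ gives
\begin{equ}
\|f^{[j],n}\|_{L^{p_2}(\Pi_n)}\leq \|K^{[j],n}\|_{L^{r}(\Pi_n)}\,\|f^{[j],n}\|_{L^{p_1}(\Pi_n)},\qquad \tfrac1r=1-\bigl(\tfrac1{p_1}-\tfrac1{p_2}\bigr).
\end{equ}
It then remains to show $\|K^{[j],n}\|_{L^{r}(\Pi_n)}\lesssim 2^{j(1/p_1-1/p_2)}=2^{j(1-1/r)}$ uniformly in $n$. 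By the discrete-to-continuous comparison \eqref{eq:disc-to-cont-fourier} (Proposition~\ref{prop lp equivalence} plus Remark~\ref{rem:fourier}), $\|K^{[j],n}\|_{L^{r}(\Pi_n)}\leq \|\mathcal F^{-1}(\chi^j_{\rho_n})\|_{L^r(\R)}$ — here I would note that \eqref{eq:disc-to-cont-fourier} as stated is for $L^1$, so I would either record the obvious $L^r$-analogue for $r\in(1,\infty)$, or, to stay safely within the stated tools, split $K^{[j],n}=\delta g$ for $g$ a Schwartz function on $\T$ with $\mathcal F^{-1}$ on $\R$ compactly supported, use $\|\delta g\|_{L^r(\Pi_n)}\lesssim\|g\|_{L^r(\T)}$ from Proposition~\ref{prop lp equivalence} (the second assertion there, valid for all $p\in[1,\infty]$ on $\iota L^p$, or the first for $p\in(1,\infty)$), and finish with the standard continuous Bernstein inequality $\|g\|_{L^r(\T)}\lesssim 2^{j(1-1/r)}\|g\|_{L^1(\T)}$ together with the uniform $L^1$ bound from \eqref{eq:disc-to-cont-fourier}. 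Scaling: $\chi^j_{\rho_n}(x)=\chi^1_1(2^{-j+1}x)$ up to the harmless $\rho_n$-factor, so $\mathcal F^{-1}(\chi^j_{\rho_n})(x)=2^{j-1}\,(\mathcal F^{-1}\chi^1_1)(2^{j-1}x)$ (times a $\rho_n\in[1,2]$ dilation), whence $\|\mathcal F^{-1}(\chi^j_{\rho_n})\|_{L^r(\R)}\asymp 2^{(j-1)(1-1/r)}\|\mathcal F^{-1}\chi^1_1\|_{L^r(\R)}\lesssim 2^{j(1/p_1-1/p_2)}$, with a constant depending only on $\chi^1_1$ (hence only on $\phi^0$) and on $r$ (hence on $p_1,p_2$). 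For $j=0$ the same works with $\phi^0_{\rho_n}$ in place of a difference, and for $j=J_n+1$ one uses the global bound noted above; thus all blocks are covered.

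The main obstacle is bookkeeping rather than conceptual: one must make sure the reproducing kernel $K^{[j],n}$ genuinely satisfies $K^{[j],n}\ast_n f^{[j],n}=f^{[j],n}$ on $\Pi_n$ — this needs $\chi^j_{\rho_n}\equiv 1$ on $\{k:\phi^j_{\rho_n}(k)\neq0\}$, which is arranged by taking $\chi^j$ a suitably fattened bump — and one must handle the wrap-around in the top block $j=J_n+1$ exactly as in Remark~\ref{rem:PL}, i.e. work with $\tilde\phi^{J_n+1}_{\rho_n}$ and its fattened companion, which are again dilations of fixed Schwartz functions on $\R$, so the Fourier-analytic estimates go through verbatim. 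A minor point worth a sentence: since the claimed constant is allowed to depend on $p_1,p_2$, there is no need to track the $r\to1^+$ or $r\to\infty$ degeneration of constants carefully; one only needs finiteness for each fixed pair, which is automatic because $\mathcal F^{-1}\chi^1_1\in\mathcal S(\R)\subset L^r(\R)$ for every $r\in[1,\infty]$. Finally, I would remark that the edge cases $p_1=1$ (where Proposition~\ref{prop lp equivalence} is only available in the $\delta\iota$ direction) and $p_2=\infty$ are accommodated because the $\delta:\iota L^\infty(\T)\to L^\infty(\Pi_n)$ bound holds for all $p\in[1,\infty]$ and $\|g\|_{L^\infty(\T)}\lesssim 2^{j}\|g\|_{L^1(\T)}$ is the corresponding continuous Bernstein estimate.
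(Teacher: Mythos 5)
Your proposal is correct, but it takes a noticeably different (and much longer) route than the paper. The paper's proof is two lines: it applies the \emph{discrete Bernstein inequality}, Lemma~\ref{lem:bernstein}, directly to each block $f^{[j],n}$ --- since for $j\leq J_n$ the Fourier support of $f^{[j],n}$ is contained in $\{-m,\dots,m-1\}$ for some $m\lesssim 2^j$, Lemma~\ref{lem:bernstein} immediately yields
\begin{equation*}
\|f^{[j],n}\|_{L^{p_2}(\Pi_n)}\lesssim 2^{j(1/p_1-1/p_2)}\|f^{[j],n}\|_{L^{p_1}(\Pi_n)},
\end{equation*}
and $\ell^{q_2}\le\ell^{q_1}$ finishes the argument. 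You appear to have noticed only the special case $m=n$ of that inequality, recorded in \eqref{eq:reverse-discrete-Lp}, which you correctly observed has the wrong power ($n$ rather than $2^j$) for intermediate blocks. This led you to reconstruct the frequency-localised Bernstein estimate from scratch via a reproducing kernel $K^{[j],n}$, Young's inequality for $\ast_n$, the Marcinkiewicz--Zygmund comparison $\|\delta g\|_{L^r(\Pi_n)}\lesssim\|g\|_{L^r(\T)}$, and the continuous Bernstein inequality on $\T$. That chain does close up (and you correctly noticed the pitfall that Remark~\ref{rem:fourier} gives only the $L^1\to L^1$ comparison between $\R$ and $\T$ and proposed the right detour through Proposition~\ref{prop lp equivalence}), and its uniformity in $n$ is handled properly including the wrap-around at $j=J_n+1$. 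So the result is the same, but the paper gets there by a one-step reduction to Lemma~\ref{lem:bernstein}, whose proof --- trivial $L^1\to L^\infty$ bound plus interpolation --- is a more direct encapsulation of exactly the fact you rebuilt. One small slip worth flagging: the parenthetical ``in fact with $q_1=q_2=\infty$ after also using $\|\cdot\|_{\ell^{q_2}}\le\|\cdot\|_{\ell^\infty}$'' has the monotonicity backwards ($\ell^q$ norms decrease in $q$, so $\|\cdot\|_{\ell^\infty}\le\|\cdot\|_{\ell^{q_2}}$), but since you then keep a general $q$ anyway, this does not affect the argument.
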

\begin{proof}
Using the discrete Berstein inequality, Lemma~\ref{lem:bernstein} below, we find that $$2^{\alpha j-(1/p_1-1/p_2)j} \| f^{[j],n}\|_{{L^{p_2}}{(\Pi_n)}}\lesssim 2^{\alpha j} \| f^{[j],n}\|_{{L^{p_1}}{(\Pi_n)}} .$$
Observing that $\| \cdot\|_{\ell^{q_2}} \leq \| \cdot\|_{\ell^{q_1}} $ concludes the proof.
\end{proof}

\begin{lemma}[Discrete Bernstein Inequality]\label{lem:bernstein}
For any $1\leq p\leq q\leq \infty$, and any function $f\in C(\Pi_n)$ of the form $f= \sum_{k=-m}^{m-1} a_k \delta e_k$ , where $m \leq n$, the following inequality holds
$$ \|f \|_{L^q(\Pi_n)} \leq (2m)^{\frac{1}{p}- \frac{1}{q}} \|f \|_{L^p(\Pi_n)} \ .$$
\end{lemma}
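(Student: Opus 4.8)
The plan is to reduce the discrete Bernstein inequality on $\Pi_n$ to a statement about trigonometric polynomials, mirroring the proof of Proposition~\ref{prop lp equivalence}. Writing $f=\sum_{k=-m}^{m-1}a_k\delta e_k$, note that for $x\in\Pi_n$ we have $\delta e_k(x)=(\delta e_1(x))^k=(e^{2\pi i x})^k$, so after the substitution $b_k=a_{k-m}$ the values $\{f(x):x\in\Pi_n\}$ are the values at the $2n$-th roots of unity of the polynomial $P(z)=\sum_{k=0}^{2m-1}b_k z^k$ of degree $<2m$. Thus $\|f\|_{L^p(\Pi_n)}^p=\frac{1}{2n}\sum_{x\in\Pi_n}|P(e^{2\pi i x})|^p=\int_0^1|P(e^{2\pi i t})|^p\,d\omega_{2n}(t)$, and similarly $\|f\|_{L^q(\Pi_n)}^q=\int_0^1|P(e^{2\pi i t})|^q\,d\omega_{2n}(t)$.

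Next I would pass through the continuous $L^p(\T)$ and $L^q(\T)$ norms of the trigonometric polynomial $g(x)=\iota f(x)=\sum_{k=-m}^{m-1}a_k e_k(x)=e^{-2\pi i m x}P(e^{2\pi i x})$, which has spectrum contained in $\{-m,\dots,m-1\}$. Since $2m\le 2n$, Theorem~\ref{theorem classical} (applied to $P$, which has degree $\le 2n-1$, so that the sampling measure $\omega_{2n}$ is admissible) gives, uniformly in $n$,
\begin{equ}
\|f\|_{L^q(\Pi_n)}=\Big(\int_0^1|P(e^{2\pi it})|^q\,d\omega_{2n}(t)\Big)^{1/q}\lesssim \|g\|_{L^q(\T)},
\end{equ}
and the reverse-direction part of Theorem~\ref{theorem classical} gives $\|g\|_{L^p(\T)}\lesssim \|f\|_{L^p(\Pi_n)}$ for $p\in(1,\infty)$. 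Therefore the discrete inequality follows once we have the classical (continuous) Bernstein/Nikolskii inequality $\|g\|_{L^q(\T)}\le (2m)^{1/p-1/q}\|g\|_{L^p(\T)}$ for trigonometric polynomials $g$ with at most $2m$ nonzero frequencies — but we must be careful about the endpoint cases $p=1$ and $q=\infty$, where the second half of Theorem~\ref{theorem classical} does not apply, and about getting the sharp constant $(2m)^{1/p-1/q}$ rather than a mesh-dependent one.

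To handle all $1\le p\le q\le\infty$ with the stated constant and no hidden dependence on $n$, I would instead argue directly on the discrete side using the convolution structure: $f=f\ast_n D_m$ where $D_m=\sum_{k=-m}^{m-1}\delta e_k$ is the discrete Dirichlet-type kernel, so by the discrete Young inequality $\|f\|_{L^q(\Pi_n)}\le\|f\|_{L^p(\Pi_n)}\|D_m\|_{L^r(\Pi_n)}$ with $1+1/q=1/p+1/r$. The point is then to bound $\|D_m\|_{L^r(\Pi_n)}\lesssim (2m)^{1-1/r}=(2m)^{1/p-1/q}$, which one gets by interpolating the trivial bounds $\|D_m\|_{L^1(\Pi_n)}\lesssim 1$ (its continuous counterpart $\sum_{k=-m}^{m-1}e_k$ has uniformly bounded $L^1(\T)$ norm? — no, this fails, $\|D_m\|_{L^1}\sim\log m$). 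Since the Dirichlet kernel is only $\log$-bounded in $L^1$, the clean route is really the one through Theorem~\ref{theorem classical} together with the sharp continuous Nikolskii inequality; the main obstacle, and the step needing the most care, is thus establishing the endpoint cases and the exact constant $(2m)^{1/p-1/q}$, which I would do by first proving $\|f\|_{L^\infty(\Pi_n)}\le (2m)^{1/p}\|f\|_{L^p(\Pi_n)}$ directly (spectrum of size $2m$ forces $\|f\|_{L^\infty(\Pi_n)}^2\le (2m/2n)\sum|a_k|^2\cdot$... via Cauchy–Schwarz on the $2m$ active Fourier modes, giving the $p=2$ case, then bootstrapping to general $p$ by the standard $\|f\|_{L^\infty}\le\|f\|_{L^\infty}^{1-p/\infty}\cdots$ trick), and then interpolating with the trivial $\|f\|_{L^q(\Pi_n)}\le\|f\|_{L^p(\Pi_n)}^{p/q}\|f\|_{L^\infty(\Pi_n)}^{1-p/q}$.
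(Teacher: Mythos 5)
The paper's proof is much shorter and takes a different endpoint as the starting point. It proves the case $(p,q)=(1,\infty)$ directly by the elementary observation
\begin{equation*}
\|f\|_{L^\infty(\Pi_n)} \le \sum_{k=-m}^{m-1}|\langle f, \delta e_k\rangle_n| \le 2m\,\|f\|_{L^1(\Pi_n)}\,,
\end{equation*}
notes that $(p,p)$ is trivial, and then handles the general case ``by interpolation''. Your proposal never establishes this $(1,\infty)$ estimate, and this is what creates the gap.

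Concretely: your two abandoned routes (passing to continuous $L^p$ via Theorem~\ref{theorem classical}, and convolution with a discrete Dirichlet kernel) are indeed problematic for exactly the reasons you give — the second half of Theorem~\ref{theorem classical} excludes $p=1$, and $\|D_m\|_{L^1}$ grows like $\log m$ — so it is good that you rejected them. But the fallback plan, namely Cauchy--Schwarz for $(2,\infty)$ followed by a ``bootstrap'' and then the interpolation $\|f\|_{L^q}\le\|f\|_{L^p}^{p/q}\|f\|_{L^\infty}^{1-p/q}$, does not cover all $p$. The self-improvement that your garbled formula is gesturing at amounts to writing $\|f\|_{L^2(\Pi_n)}^2\le \|f\|_{L^\infty(\Pi_n)}^{2-p}\|f\|_{L^p(\Pi_n)}^{p}$ and plugging this into $\|f\|_{L^\infty(\Pi_n)}^2\le 2m\,\|f\|_{L^2(\Pi_n)}^2$; this works only for $p\le 2$, since for $p>2$ the exponent $2-p$ is negative and the log-convexity inequality runs the other way. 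For $p>2$ your chain only yields $\|f\|_{L^\infty(\Pi_n)}\le (2m)^{1/2}\|f\|_{L^p(\Pi_n)}$, which is strictly weaker than what is needed. In contrast, the paper's $(1,\infty)$ estimate, together with $(\infty,\infty)$, is exactly the pair of endpoints from which interpolation yields $(p,\infty)$ across the full range $p\in[1,\infty]$. So the missing ingredient in your argument is precisely the paper's one-line triangle-inequality proof of $\|f\|_{L^\infty(\Pi_n)}\le 2m\|f\|_{L^1(\Pi_n)}$. A minor point: in your Cauchy--Schwarz step the factor should be $2m$, not $2m/2n$, since by Parseval for the orthonormal family $\{\delta e_k\}$ one has $\|f\|_{L^2(\Pi_n)}^2=\sum_k|a_k|^2$.
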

\begin{proof}
The case $p=q$ is obvious. The case $q= \infty$, $p=1$ follows from
$$\|f \|_{L^\infty(\Pi_n)} \leq \sum_{k=-m}^{m-1}|\langle f, \delta e_k\rangle_n | 
\leq 2m \| f\|_{L^1(\Pi_n)}.$$
The general case follows by interpolation.
\end{proof}

\begin{lemma}\label{lem:product}
Let $\alpha,\beta\in\R$ such that $\alpha + \beta >0$. Then there exists a constant $N=N(\alpha,\beta)$ such that for any functions $f,g\in C(\Pi_n)$  one has the following estimate 
\begin{equ}
\|fg\|_{B^{\alpha\wedge \beta}_{\infty,\infty}(\Pi_n)}\leq N\|f\|_{B^{\alpha}_{\infty,\infty}(\Pi_n)}\|g\|_{B^{ \beta}_{\infty,\infty}(\Pi_n)}.
\end{equ}
\end{lemma}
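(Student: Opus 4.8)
The statement is the standard "paraproduct estimate" (really just the crudest version of Bony's decomposition, since we only need the case $\alpha+\beta>0$), adapted to the discrete torus $\Pi_n$. The strategy is to mimic the continuous proof but to be careful that all constants are uniform in $n$. The plan is to decompose the product $fg$ into Littlewood–Paley pieces, split the double sum over block indices into the "low--high", "high--low", and "diagonal" regions, and estimate each region separately using the support properties of the multipliers $\phi^j_{\rho_n}$ and the discrete Bernstein inequality (Lemma~\ref{lem:bernstein}).

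First I would record the key spectral bookkeeping fact: since $\phi^j_{\rho_n}$ is supported (after the unfolding described in Remark~\ref{rem:PL}) in an annulus of Fourier modes of size comparable to $2^j$, the product $f^{[i],n}g^{[j],n}$ has discrete Fourier support contained in $\{|k|\lesssim 2^{\max(i,j)}\}$, and moreover when $|i-j|\geq 2$ its support lies in an annulus of size $\sim 2^{\max(i,j)}$. Consequently, for the Littlewood–Paley block of the product one has
\begin{equ}
(fg)^{[\ell],n}=\sum_{(i,j)\in A_\ell}\big(f^{[i],n}g^{[j],n}\big)^{[\ell],n},
\end{equ}
where $A_\ell$ consists of pairs with either $\max(i,j)=\ell+O(1)$ (the two off-diagonal regimes) or $\min(i,j)\geq \ell - O(1)$ with $|i-j|\leq 1$ (the diagonal/resonant regime). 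I would then apply the discrete analogue of the fact that $\|h^{[\ell],n}\|_{L^\infty(\Pi_n)}\lesssim \|h\|_{L^\infty(\Pi_n)}$ uniformly in $n,\ell$ — which follows exactly as in the proof of Proposition~\ref{prop:equivalence}, writing the block as a convolution against $\sum_k\hat\phi^\ell_{\rho_n}(k)\delta e_k$ whose $L^1(\Pi_n)$-norm is controlled uniformly via \eqref{eq:disc-to-cont-fourier} — to reduce everything to estimating $\|f^{[i],n}g^{[j],n}\|_{L^\infty(\Pi_n)}\leq \|f^{[i],n}\|_{L^\infty(\Pi_n)}\|g^{[j],n}\|_{L^\infty(\Pi_n)}\lesssim 2^{-\alpha i}\|f\|_{B^\alpha_{\infty,\infty}(\Pi_n)}\cdot 2^{-\beta j}\|g\|_{B^\beta_{\infty,\infty}(\Pi_n)}$.

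For the low--high term $\sum_{i\leq j-2}$ contributing to level $\ell=j+O(1)$: summing $2^{-\alpha i}$ over $i\leq j$ gives a factor $\lesssim 2^{(-\alpha)_+ j}$ if $\alpha<0$, or $\lesssim 1$ if $\alpha\geq 0$; in either case (using $\alpha\wedge\beta\leq\beta$ and $\alpha+\beta>0$ so that $\alpha>-\beta$) one gets $2^{-(\alpha\wedge\beta)\ell}\|f^{[\ell'],n}g^{[j],n}\|\ldots$ summable. Concretely, for $\alpha\geq 0$ the sum over $i$ is bounded by a constant and the $j$-block already carries $2^{-\beta j}\geq 2^{-(\alpha\wedge\beta)\ell}$-worth of decay; for $\alpha<0$ one has $\alpha\wedge\beta=\alpha$ when $\alpha\le\beta$ and the $\sum_{i\le j}2^{-\alpha i}\sim 2^{-\alpha j}$ combines with $2^{-\beta j}$ to give $2^{-(\alpha+\beta)j}$, and since $\alpha+\beta>0>\alpha$ this beats the required $2^{-\alpha j}$. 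The high--low term $\sum_{j\leq i-2}$ is symmetric. The diagonal term $\sum_{|i-j|\leq 1,\ i\geq\ell-O(1)}$ is where one must use that the output block index $\ell$ can be much smaller than $i$: here I pay the Bernstein price, bounding $\|(f^{[i],n}g^{[j],n})^{[\ell],n}\|_{L^\infty(\Pi_n)}\lesssim\|f^{[i],n}g^{[j],n}\|_{L^\infty(\Pi_n)}\lesssim 2^{-(\alpha+\beta)i}\|f\|\|g\|$, and then $\sum_{i\geq\ell}2^{-(\alpha+\beta)i}\lesssim 2^{-(\alpha+\beta)\ell}\leq 2^{-(\alpha\wedge\beta)\ell}$, using precisely $\alpha+\beta>0$ for the geometric summation and $\alpha+\beta\geq\alpha\wedge\beta$ (true since the other of $\alpha,\beta$ is $\geq 0$... which need not hold — but $\alpha+\beta\ge\alpha\wedge\beta$ is equivalent to $\alpha\vee\beta\ge 0$, which does follow from $\alpha+\beta>0$). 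Taking the supremum over $\ell$ yields the claimed bound.

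The main obstacle, and the only place the discreteness genuinely intervenes, is the finiteness of the Fourier range ($|k|\le n$) and the "wrap-around" identification $\delta e_k=\delta e_{k-2n}$: one must make sure that the top blocks (indices near $J_n$ and $J_n+1$) still satisfy the annulus/ball spectral support statements and the uniform $L^1(\Pi_n)$ bound on the convolution kernels. This is exactly handled by the unfolding in Remark~\ref{rem:PL} (which shows $f^{[J_n+1],n}$ is a convolution against a rescaling of a fixed Schwartz function) together with \eqref{eq:disc-to-cont-fourier}; since there are only boundedly many such boundary indices and the number of LP blocks is $O(\log n)$, no issue with uniformity arises. Apart from this, the argument is the textbook paraproduct estimate and I would present it compactly, citing Proposition~\ref{prop:equivalence}, Lemma~\ref{lem:bernstein}, and Remark~\ref{rem:PL} for the technical inputs.
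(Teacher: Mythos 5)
Your plan follows the same route as the paper's proof: the Bony/Coifman--Meyer decomposition into $\pi_<$, $\pi_>$, $\pi_=$, the observation that the off-diagonal paraproducts contribute to output block $\ell$ only from input indices within bounded distance of $\ell$, the geometric sum $\sum_{i\le j}2^{-\alpha i}\lesssim 2^{-\alpha j}\vee 1$ for the low-frequency factor, and the use of $\alpha+\beta>0$ to sum the resonant term $\sum_{i\ge\ell}2^{-(\alpha+\beta)i}$. Your explicit flagging of the wrap-around issue at the boundary blocks near $J_n,J_n+1$ (and its resolution via Remark~\ref{rem:PL} and the uniform $L^1(\Pi_n)$ bound on the convolution kernels) is a welcome clarification that the paper leaves implicit behind the phrase ``essentially the same as in the continuous case,'' but it does not change the argument; the two proofs are the same.
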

\begin{proof}
The proof is essentially the same as in the continuous case, for the convenience of the reader we sketch it.
We introduce the quantities $f^{[\leq j],n}= \sum_{i\leq j} f^{[i],n}$ and similarly $g^{[\leq j],n}= \sum_{i\leq j} g^{[i],n}$, as well as the Coifmann-Meyer operators
$$\pi_{<} (f,g):= \sum_{k\in\N}  f^{[k],n} g^{[\leq k-2],n}, \quad \pi_{=} (f,g):= \sum_{\substack{k,\ell\in\N\\|k-\ell|\leq 1}}  f^{[k],n}  g^{[\ell],n} ,\quad
\pi_{>} (f,g):= \pi_{<} (g,f)\ .$$
Trivially, one has 
$$fg= \pi_{<} (f,g)+ \pi_{=} (f,g) +\pi_{>} (f,g), $$
and we shall bound each of these terms separately. Without loss of generality, assume $\|f\|_{B^{\alpha}_{\infty,\infty}(\Pi_n)}=\|g\|_{B^{ \beta}_{\infty,\infty}(\Pi_n)}=1$ and note that then
$$
\|f^{[\leq k-2],n}\|_{L^\infty(\Pi_n)}\leq \sum_{\substack{j\in\N\\j\leq k-2}} \|f^{[j],n}\|_{L^\infty(\Pi_n)}\leq \sum_{\substack{j\in\N\\j\leq k-2}} 2^{-\alpha j} \lesssim 2^{-\alpha k}\vee 1,
$$
and similarly $\|g^{[\leq k-2],n}\|_{L^\infty(\Pi_n)}\lesssim 2^{-\beta k}\vee 1$.
Note further that
since the product $f^{[k],n} g^{[\leq k-2],n}$ only consists of frequencies whose order of magnitude is of bounded distance from $k$ (with our setup of Littlewood-Paley blocks, with distance up to $2$),
the contribution to frequencies of order $2^j$ to $\pi_<(f,g)$ comes only from terms with $|k-j|\leq 2$.Therefore,
\begin{align*}
\big\|\big(\pi_{<} (f,g)\big)^{[j],n}\big\|_{L^\infty(\Pi_n)} & \lesssim \sum_{\substack{k\in\N\\|k-j|\leq 2}} \|f^{[k],n}\|_{L^\infty(\Pi_n)} \|g^{[\leq k-2],n}\|_{L^\infty(\Pi_n)}\\
& \lesssim \sum_{\substack{k\in\N\\|k-j|\leq 2}}  2^{-\alpha k}(2^{-\beta k}\vee 1) \lesssim 2^{-j(\alpha+\beta\wedge 0)}.
\end{align*}
Exactly the same way $\big\|\big(\pi_{>} (f,g)\big)^{[j],n}\big\|_{L^\infty(\Pi_n)} \lesssim 2^{-j(\alpha\wedge0+\beta)}$.
Lastly, for $\pi_=(f,g)$, with a similar reasoning as above, one sees that the contribution to frequencies of order $2^j$ comes from the terms $k\geq j-2$. Therefore
\begin{align*}
\big\|\big(\pi_{=} (f,g)\big)^{[j],n}\big\|_{L^\infty(\Pi_n)} & \lesssim  \sum_{\substack{k,\ell\in\N\\ k,\ell\geq j-2 \\|k-\ell|\leq 1}} \| f^{[k],n}\|_{L^\infty(\Pi_n)} \|  g^{[\ell],n}\|_{L^\infty(\Pi_n)} \\
&\lesssim \sum_{\substack{k,\ell\in\N\\ k,\ell\geq j-2 \\|k-\ell|\leq 1}}  2^{-\alpha \ell- \beta k } \lesssim 2^{-j(\alpha+ \beta )} 
\end{align*}
using the condition $\alpha+\beta>0$ in the very last step. Since the minimum of the exponents $\alpha\wedge0+\beta$ and $\alpha+\beta\wedge 0$ is simply $\alpha\wedge\beta$, we get the claim.
\end{proof}

\subsection{Heat kernel bounds}
Since one can view $p_n(x,y)$ the transition probability from $x$ to $y$ of a random walk on $\Pi_n$ (see \cite[Rem.~2.1.7]{Mate}), one has $\|p^n_t(x,\cdot)\|_{L^1(\Pi_n)}=1$, and therefore $\|\cP^n_tf\|_{L^p(\Pi_n)}\leq\|f\|_{L^p(\Pi_n)}$ for any $p\in[1,\infty]$.
\begin{lemma}\label{lem:discrete-semigroup1}
Let $\alpha\in\R$, $p,q\in[1,\infty]$, and $\delta>0$. Then there exists a constant $N=N(\delta,c)$ such that for all $t\in\Lambda_n\cap[0,1]$ and $f\in C(\Pi_n)$ one has the bound
\begin{equ}
\|\cP^n_t f\|_{B^{\alpha+\delta}_{p,q}(\Pi_n)}\leq Nt^{-\delta/2}\|f\|_{B^{\alpha}_{p,q}(\Pi_n)}.
\end{equ}
\end{lemma}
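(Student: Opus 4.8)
The plan is to reduce the smoothing estimate to a Fourier-side (Littlewood--Paley block by block) computation, exactly paralleling the proof of the analogous continuous-time statement. Recall that $f^{[j],n}$ is a Fourier multiplier applied to $f$, supported on frequencies $|k|\lesssim 2^j$ (and $|k|\le n$, which is the only relevant range). So writing $g=\cP^n_t f$, one has $g^{[j],n}=\cP^n_t(f^{[j],n})$ since $\cP^n_t$ is itself a Fourier multiplier (it acts diagonally on the basis $\delta e_k$ with multiplier $(1+h\lambda^n_k)^{t/h}$) and the two multipliers commute. Hence it suffices to show the single-block bound
\begin{equ}
\|\cP^n_t \big(f^{[j],n}\big)\|_{L^p(\Pi_n)} \lesssim_{\delta,c} \big(2^{2j} t\big)^{-\delta/2}\, \|f^{[j],n}\|_{L^p(\Pi_n)},
\end{equ}
because then multiplying by $2^{(\alpha+\delta)j}=2^{\alpha j}\cdot 2^{\delta j}$ and using $2^{\delta j}(2^{2j}t)^{-\delta/2}=t^{-\delta/2}$ gives $2^{(\alpha+\delta)j}\|g^{[j],n}\|_{L^p}\lesssim t^{-\delta/2}\,2^{\alpha j}\|f^{[j],n}\|_{L^p}$, and taking $\ell^q$ in $j$ finishes it.

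To prove the single-block bound, first observe $\|\cP^n_t\|_{L^p(\Pi_n)\to L^p(\Pi_n)}\le 1$ for every $t\in\Lambda_n$ (stated just before the lemma, from $\|p^n_t(x,\cdot)\|_{L^1(\Pi_n)}=1$), so the estimate is trivial when $2^{2j}t\lesssim 1$. For $2^{2j}t\gtrsim 1$ I would write $\cP^n_t(f^{[j],n}) = m^{n,j}_t \ast_n f^{[j],n}$ where $m^{n,j}_t=\sum_{k}(1+h\lambda^n_k)^{t/h}\hat\phi^j_{\rho_n}(k)\,\delta e_k$ is the combined multiplier (using the convolution description of the block from the proof of Proposition~\ref{prop:equivalence} and Remark~\ref{rem:PL}), and estimate $\|m^{n,j}_t\|_{L^1(\Pi_n)}$. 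By the discrete-to-continuous comparison \eqref{eq:disc-to-cont-fourier} (Proposition~\ref{prop lp equivalence} plus Remark~\ref{rem:fourier}) this $L^1(\Pi_n)$ norm is bounded by $\|\mathcal F^{-1}(\Psi^{n,j}_t)\|_{L^1(\R)}$, where $\Psi^{n,j}_t$ is a smooth function on $\R$ agreeing with $k\mapsto (1+h\lambda^n_k)^{t/h}\hat\phi^j_{\rho_n}(k)$ at integers; one builds $\Psi^{n,j}_t$ so that it is supported in an annulus $|x|\sim 2^j$, equals $e^{-\text{(something)}\,t\,x^2}$ times the bump there, and has all derivatives controlled after rescaling $x=2^j y$. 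Then $\|\mathcal F^{-1}(\Psi^{n,j}_t)\|_{L^1(\R)}$ is estimated in the standard way: its Fourier transform is Schwartz uniformly after the rescaling, with the key gain coming from $|1+h\lambda^n_k|^{t/h}\le e^{-\kappa t k^2}$ of \eqref{eq:exp-discrete}, which on the annulus gives a factor $e^{-\kappa t 2^{2j}/C}$; combined with the polynomial factors from differentiating the bump this yields $\|m^{n,j}_t\|_{L^1(\Pi_n)}\lesssim_\delta (2^{2j}t)^{-\delta/2}$ (in fact with super-polynomial decay, of which we only keep the power we need). Young's convolution inequality on $\Pi_n$ then gives the block bound.

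The main obstacle is the bookkeeping in constructing the smooth extension $\Psi^{n,j}_t$ on $\R$ and getting $n$-uniform control of $\|\mathcal F^{-1}(\Psi^{n,j}_t)\|_{L^1(\R)}$: unlike the continuous multiplier $e^{-4\pi^2 k^2 t}$, the discrete symbol $(1+h\lambda^n_k)^{t/h}$ is only defined and only has a clean exponential bound for $|k|\le n$, and one must be careful that the Littlewood--Paley cutoff $\hat\phi^j_{\rho_n}$ (in particular the modified top block $\tilde\phi^{J_n+1}_{\rho_n}$ of Remark~\ref{rem:PL}) keeps the product supported where the bound \eqref{eq:exp-discrete} is valid, and that the derivative bounds on the extended symbol survive the rescaling $x\mapsto 2^{-j}x$ uniformly in $n$, $j$ and $t\in\Lambda_n\cap[0,1]$. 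Everything else — commuting multipliers, the trivial regime $2^{2j}t\lesssim1$, and assembling the $\ell^q$ sum — is routine. One could alternatively avoid the explicit $\R$-extension by working directly on $\Pi_n$ with summation-by-parts to exploit the smoothness of the symbol in $k$; this is essentially the discrete analogue of the Mikhlin/Bernstein argument and gives the same power of $2^{2j}t$, but the cleanest route is the comparison estimate \eqref{eq:disc-to-cont-fourier} already available to us.
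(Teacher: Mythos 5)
Your proposal is correct and follows essentially the same route as the paper: commute $\cP^n_t$ with the Littlewood--Paley projection, reduce to an $L^1(\Pi_n)$ bound on the combined multiplier, transfer to $\R$ via the discrete-to-continuous Fourier comparison \eqref{eq:disc-to-cont-fourier}, and extract the gain from the exponential decay \eqref{eq:exp-discrete} after controlling two derivatives of the symbol (the paper does this by the $(1+\Delta)$/Hausdorff--Young trick, which is the explicit form of your ``Schwartz uniformly after rescaling''). One small technical point: when you write $\cP^n_t(f^{[j],n})=m^{n,j}_t\ast_n f^{[j],n}$ with $m^{n,j}_t$ built from $\hat\phi^j_{\rho_n}$ you actually get the LP cutoff squared; the paper avoids this by introducing an auxiliary bump $\eta^j_{\rho_n}$ that is identically $1$ on $\supp\hat\phi^j_{\rho_n}$ (and still supported in an annulus away from $0$), so that $\cP^n_t f^{[j],n}=\bigl(\sum_k\eta^j_{\rho_n}(k)\tilde\mu_{t,n}(k)\delta e_k\bigr)\ast_n f^{[j],n}$ exactly — this is the cleanest way to carry out the step you sketch.
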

\begin{proof}
As in the proof of Proposition \ref{prop:equivalence}, we want to appeal to arguments on continuous Fourier transforms.
To this end, first we extend some of our functions: define, for $z\in\R$, $n\in\N$, $t\in\Lambda_n$:
\begin{equ}
\tilde\lambda^n(z)=-16n^2\sin^2\Big(\frac{z\pi}{2n}\Big), \qquad
\tilde\mu_{t,n}(z)=\big(1+h\tilde\lambda^n(z)\big)^{t/h}.
\end{equ}
In particular, we have $\tilde\lambda^n(j)=\lambda^n_j$ for $j\in\Z$.
Tedious but elementary calculations show
\begin{equs}
\partial_z\tilde\mu_{t,n}(z)&=-\frac{8\pi tn\sin\big(\frac{z\pi}{n}\big)}{1+h\tilde\lambda^n(z)}\tilde\mu_{t,n}(z),
\\
(\partial_z)^2\tilde\mu_{t,n}(z)&=\frac{(t^2-ht)\big(8n\pi\sin\big(\frac{z\pi}{n}\big)\big)^2}{(1+h\tilde\lambda^n(z))^2}\tilde\mu_{t,n}(z)-\frac{8\pi^2 t\cos\big(\frac{z\pi}{n}\big)}{1+h\tilde\lambda^n(z)}\big(\tilde\mu_{t,n}(z)\big).
\end{equs}
Recalling that by our choice of $c$ we have $1+h\tilde\lambda^n(z)\geq 1/2$, we get the bounds
\begin{equ}\label{eq:mu-derivatives}
|\partial_z\tilde\mu_{t,n}(z)|\lesssim t|z|\tilde\mu_{t,n}(z),\qquad
|(\partial_z)^2\tilde\mu_{t,n}(z)|\lesssim \big(t^2|z|^2+t)\tilde\mu_{t,n}(z).
\end{equ}

Let $\eta^1_{1}: \R\to [0,1]$ be a smooth compactly supported function such that $\eta^1_1|_{\supp \phi^1_1}\equiv 1$, $0\notin \supp \eta^1_1$, and $\supp\eta^1_1\subset B_{(1+\eps_0)\rho_0^{-1}}$. Then set $\eta^j_\rho(x):= \eta(2^{-j}\rho^{-1}x)$ and observe that for $j\geq 1$
$$
(\cP^n_t f\big)^{[j],n} = \cP^n_t (f^{[j],n}) = \Big(\sum_{k=-n}^{n-1} \eta^j_{\rho_n}(k) \tilde \mu_{t,n}(k)\delta e_k \Big) *_n (f^{[j],n}),
$$ so that it suffices to show 
\begin{equ}\label{usefull estimate}
\big\|\sum_{k=-n}^{n-1} \eta^j_{\rho_n}(k) \tilde \mu_{t,n}(k)\delta e_k \big\|_{L^1(\Pi_n)} \lesssim \big(t^{1/2}2^j\big)^{-\delta}
\end{equ}
 uniformly over $n\in \mathbb{N}$, $t\in \Lambda_n$ and $j\leq J_n+1$.
First consider the $1\leq j\leq J_n$ case. Thanks to our choice of $\eta_1^1$, the support of $\eta^j_{\rho_n}$ is contained in $B_{n}$, so the summation in $k$ can freely be changed to run over $k\in\Z$.
Therefore, first using the argument as in \eqref{eq:disc-to-cont-fourier} we get
\begin{equs}
\big\|\sum_{k\in\Z} \eta^j_{\rho_n}(k) \tilde \mu_{t,n}(k)\delta e_k \big\|_{L^1(\Pi_n)}&\leq \big\|\mathcal{F}^{-1}\big(\eta_{\rho_n}^{1}(\cdot)\tilde \mu_{t,n}(2^j\cdot)\big)\big\|_{L^1(\R)}.
\end{equs}
By H\"olders inequality, we get
\begin{equs}[long1]
\big\|\mathcal{F}^{-1}\big(\eta_{\rho_n}^{1}(\cdot)\tilde\mu_{t,n}(2^j\cdot)\big)\big\|_{L^1(\R)}
&\lesssim \big\|(1+|\cdot|^2)\mathcal{F}^{-1}\big(\eta_{\rho_n}^{1}(\cdot)\tilde\mu_{t,n}(2^j\cdot)\big)\big\|_{L^\infty(\R)}
\\
&\lesssim \big\|\mathcal{F}^{-1}\big((1+\Delta)(\eta_{\rho_n}^{1}(\cdot)\tilde\mu_{t,n}(2^j\cdot))\big)\big\|_{L^\infty(\R)}
\\
&\leq \big\|(1+\Delta)(\eta_{\rho_n}^{1}(\cdot)\tilde\mu_{t,n}(2^j\cdot))\big\|_{L^1(\R)}.
\end{equs}
Using the bounds \eqref{eq:mu-derivatives} we have
\begin{equs}[long2]
\big\|&(1+\Delta)(\eta_{\rho_n}^{1}(\cdot)\tilde\mu_{t,n}(2^j\cdot))\big\|_{L^1(\R)}
\\
&\lesssim \int_{\supp \eta^1_{\rho_n}}|\tilde\mu_{t,n}(2^j z)|+2^j|\partial_z\tilde\mu_{t,n}(2^j z)|+2^{2j}|(\partial_z)^2\tilde\mu_{t,n}(2^j z)|\,dz
\\
&\lesssim\int_{\supp \eta^1_{\rho_n}}\big(1+2^{2j}t|z|+2^{4j}t^2|z|^2+2^{2j}t\big)|\tilde\mu_{t,n}(2^j z)|\,dz
\\
&=\big(t^{1/2}2^j\big)^{-\delta}\int_{\supp \eta^1_{\rho_n}}(t^{1/2}2^j\big)^{\delta}\big(1+2^{2j}t|z|+2^{4j}t^2|z|^2+2^{2j}t\big)|\tilde\mu_{t,n}(2^j z)|\,dz
\\
&\leq (t^{1/2}2^j\big)^{-\delta}\sup_{\substack{z\in \supp\eta^1_{\rho_n}\\ r\geq 0}}
\Big(\big(r^\delta(1+r^2|z|+r^4|z|^2+r^2)\big)e^{-\kappa r^2|z|^2}\Big)
\end{equs}
using \eqref{eq:exp-discrete} in the last step. Since the support of $\eta^1_{\rho_n}$ is separated from $0$ uniformly in $n$, the supremum is bounded uniformly in $n$, finishing the proof of \eqref{usefull estimate} in the case $j\neq 0,J_n+1$.

For $j=J_n+1$, we use the $2n$-periodicity in $k$ of $\tilde\mu_{t,n}$ and $\delta e_{\cdot}$ to rewrite the sum on the right-hand side of \eqref{usefull estimate} as
\begin{equ}
\sum_{k=-n}^{n-1} \eta^{J_n+1}_{\rho_n}(k) \tilde \mu_{t,n}(k)\delta e_k=\sum_{k\in\Z}\tilde \eta^{J_n+1}_{\rho_n}(k) \tilde \mu_{t,n}(k)\delta e_k,
\end{equ}
where $\tilde \eta^{j+1}_\rho=\eta^{j+1}_\rho\mathbf{1}_{(0,2^j\rho)}+\eta^{j+1}_\rho(\cdot-2^{j+1}\rho)\mathbf{1}_{[2^j\rho,\infty)}$ is the rescaling of the smooth (!) function $\tilde\eta_1^1$. In particular, $\tilde\eta^{J_{n}+1}_{\rho_n}(x)=\tilde\eta^1_1(n^{-1}x)$. From there the bound \eqref{usefull estimate} follows as before.

Finally, for $j=1$ we can simply use that the $L^1(\Pi_n)$ norm of the lowest order multiplier is uniformly bounded in $n$ and so the analogue of \eqref{usefull estimate} follows from the trivial inequality $1\leq t^{-\delta/2}$ for $t\in (0,1]$. 
\end{proof}

\begin{lemma}\label{lem:discrete-semigroup2}
Let $\alpha\in\R$, $p,q\in[1,\infty]$, and $\delta\in[0,2]$. Then there exists a constant $N=N(\delta,c)$ such that for all $t\in\Lambda_n\cap[0,1]$ and $f\in C(\Pi_n)$ one has the bound
\begin{equ}
\|\cP^n_t f-f\|_{B^{\alpha}_{p,q}(\Pi_n)}\leq Nt^{\delta/2}\| f\|_{B^{\alpha+\delta}_{p,q}(\Pi_n)}.
\end{equ}
\end{lemma}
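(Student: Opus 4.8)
The plan is to follow the proof of Lemma~\ref{lem:discrete-semigroup1} almost verbatim, working one Littlewood--Paley block at a time. For $j\geq 1$ one has, with the cutoffs $\eta^j_{\rho_n}$ of the previous proof, the identity
\[
(\cP^n_t f-f)^{[j],n}=(\cP^n_t-\id)\big(f^{[j],n}\big)=\Big(\sum_{k=-n}^{n-1}\eta^j_{\rho_n}(k)\,\big(\tilde\mu_{t,n}(k)-1\big)\,\delta e_k\Big)\ast_n f^{[j],n},
\]
so by Young's convolution inequality it suffices to establish the multiplier estimate
\[
\Big\|\sum_{k=-n}^{n-1}\eta^j_{\rho_n}(k)\,\big(\tilde\mu_{t,n}(k)-1\big)\,\delta e_k\Big\|_{L^1(\Pi_n)}\lesssim\big(t^{1/2}2^j\big)^{\delta}
\]
uniformly over $n\in\N$, $t\in\Lambda_n\cap[0,1]$ and $1\leq j\leq J_n+1$. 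Granting this, one gets $\|(\cP^n_t f-f)^{[j],n}\|_{L^p(\Pi_n)}\lesssim(t^{1/2}2^j)^{\delta}\|f^{[j],n}\|_{L^p(\Pi_n)}$, hence $2^{\alpha j}\|(\cP^n_t f-f)^{[j],n}\|_{L^p(\Pi_n)}\lesssim t^{\delta/2}\,2^{(\alpha+\delta)j}\|f^{[j],n}\|_{L^p(\Pi_n)}$, and taking the $\ell^q$-norm in $j$ gives the claim. The lowest-order block $j=0$ is treated directly: there the multiplier is $\phi^0_{\rho_n}(k)(\tilde\mu_{t,n}(k)-1)$, and since $|k|\lesssim1$ on $\supp\phi^0_{\rho_n}$ one has $|\tilde\mu_{t,n}(k)-1|\lesssim t$, so the corresponding $L^1(\Pi_n)$-norm is $\lesssim t\leq t^{\delta/2}$ for $t\in(0,1]$, $\delta\in[0,2]$.

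For the multiplier estimate I would recycle the argument behind \eqref{usefull estimate}: pass from $\Pi_n$ to $\R$ via Remark~\ref{rem:fourier}, bound the $L^1(\Pi_n)$-norm above by $\big\|(1+\Delta)\big(\eta^1_{\rho_n}(\cdot)\,(\tilde\mu_{t,n}(2^j\cdot)-1)\big)\big\|_{L^1(\R)}$, and expand the derivatives. The terms carrying $\partial_z\tilde\mu_{t,n}$ and $(\partial_z)^2\tilde\mu_{t,n}$ (the constant $1$ being annihilated by $\partial_z$) are exactly the ones already bounded by $(t^{1/2}2^j)^{\delta}$ in the proof of Lemma~\ref{lem:discrete-semigroup1} through \eqref{eq:mu-derivatives}, \eqref{eq:exp-discrete}, and the fact that $\supp\eta^1_{\rho_n}$ is a fixed compact set separated from $0$ uniformly in $n$.

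The only genuinely new contribution is $\int_{\supp\eta^1_{\rho_n}}|\tilde\mu_{t,n}(2^jz)-1|\,dz$, which I would control via the elementary bound $0\leq1-(1-b)^m\leq(mb)\wedge1$, valid for $b\in[0,1]$ and $m\in\N$, applied with $b=-h\tilde\lambda^n(z)$ and $m=t/h\in\N$. By $c\leq1/8$ one has $b=4c\sin^2\big(\tfrac{z\pi}{2n}\big)\in[0,1/2]$ (the same fact that gives $1+h\tilde\lambda^n(z)\geq1/2$), while $mb=-t\tilde\lambda^n(z)=16n^2t\sin^2\big(\tfrac{z\pi}{2n}\big)\leq4\pi^2tz^2$ for all $z\in\R$ (using $\sin^2 x\leq x^2$), so that
\[
|\tilde\mu_{t,n}(z)-1|\lesssim(tz^2)\wedge1\lesssim\big(t^{1/2}|z|\big)^{\delta}\qquad(\delta\in[0,2],\ z\in\R),
\]
using $(x\wedge1)\leq x^{\delta/2}$ for $x\geq0$. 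Since $|z|$ stays in a fixed compact set away from $0$ on $\supp\eta^1_{\rho_n}$, this contributes $\lesssim(t^{1/2}2^j)^{\delta}$ after the rescaling by $2^j$. Finally, the block $j=J_n+1$ is handled as in Lemma~\ref{lem:discrete-semigroup1}, replacing $\eta^{J_n+1}_{\rho_n}$ by the rescaled smooth cutoff $\tilde\eta^{J_n+1}_{\rho_n}$ via the $2n$-periodicity of $\tilde\mu_{t,n}$ and of $\delta e_{\cdot}$.

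I do not expect a serious obstacle: this is essentially a transcription of the proof of Lemma~\ref{lem:discrete-semigroup1} with $\tilde\mu_{t,n}$ replaced by $\tilde\mu_{t,n}-1$. The one point that needs attention is the uniformity in $n$ of the estimate on $|\tilde\mu_{t,n}(z)-1|$: one must keep $b=-h\tilde\lambda^n(z)$ inside $[0,1/2]$ (which is precisely the CFL-type restriction $c\leq1/8$) and bound $mb$ by a constant times $tz^2$, so that the interpolation $(tz^2)\wedge1\lesssim(t^{1/2}|z|)^{\delta}$ can be invoked with exactly the exponent $\delta/2$ needed to produce the power $t^{\delta/2}$; everything else is routine.
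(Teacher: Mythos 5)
Your proof is correct and follows essentially the same route as the paper: reduce to the multiplier estimate $\|\sum_k\eta^j_{\rho_n}(k)(\tilde\mu_{t,n}(k)-1)\delta e_k\|_{L^1(\Pi_n)}\lesssim(t^{1/2}2^j)^\delta$, pass to $\R$ via the Fourier/Poisson argument, and treat the $j=0$ and $j=J_n+1$ blocks separately. Your explicit Bernoulli bound $0\leq1-(1-b)^m\leq(mb)\wedge1$ is a clean, elementary substitute for the paper's stated inequality $|\tilde\mu_{t,n}(w)-1|\lesssim t|w|^2e^{-\kappa t|w|^2}$. One small imprecision: the derivative terms were bounded by $(t^{1/2}2^j)^{-\delta}$, not $(t^{1/2}2^j)^{+\delta}$, in Lemma~\ref{lem:discrete-semigroup1}; here the same ingredients \eqref{eq:mu-derivatives}, \eqref{eq:exp-discrete} do give $(t^{1/2}2^j)^{+\delta}$, but the reason the supremum over small $r=t^{1/2}2^j$ stays finite is now that the quantity in parentheses vanishes like $r^2$ and $\delta\leq2$, rather than the presence of a constant term as in Lemma~\ref{lem:discrete-semigroup1}.
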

\begin{proof}
Defining $\eta^j_\rho$ exactly as in the proof of Lemma~\ref{lem:discrete-semigroup1} we write for $j\geq 1$
$$(\cP^n_t f-f\big)^{[j],n} = \Big(\sum_{k=-n}^{n-1} \eta^j_{\rho_n}(k) (\tilde\mu_{t,n}(k)-1)\delta e_k \Big) *_n (f^{[j],n})\ .$$
Therefore this time we aim for the bound
\begin{equ}\label{useful2}
\big\|\sum_{k=-n}^{n-1} \eta^j_{\rho_n}(k) (\tilde \mu_{t,n}(k)-1)\delta e_k \big\|_{L^1(\Pi_n)} \lesssim \big(t^{1/2}2^j\big)^{\delta}.
\end{equ}
Starting with $1\leq j\leq J_n$, we can again replace the summation in $k$ over all of $\Z$, and following the argument in \eqref{long1}-\eqref{long2} we find
\begin{equs}
\big\|&\sum_{k\in\Z} \eta^j_{\rho_n}(k) (\tilde \mu_{t,n}(k)-1)\delta e_k \big\|_{L^1(\Pi_n)}
\\
&\lesssim\int_{\supp \eta^1_{\rho_n}}
|\tilde\mu_{t,n}(2^j z)-1|+2^j|\partial_z\tilde\mu_{t,n}(2^j z)|+2^{2j}|(\partial_z)^2\tilde\mu_{t,n}(2^j z)|\,dz
\\
&\lesssim\int_{\supp \eta^1_{\rho_n}}
\big(2^{2j}|z|^2t+2^{4j}t^2|z|^2+2^{2j}t\big)e^{-\kappa t 2^{2j}|z|^2}
\\
& \lesssim \big(t^{1/2}2^j\big)^{\delta}\sup_{\substack{z\in \supp\eta^1_{\rho_n}\\ r\geq 0}}
\Big(\big(r^{-\delta}(r^2|z|^2+r^4|z|^2+r^2)\big)e^{-\kappa r^2|z|^2}\Big)
\end{equs}
using $|\tilde\mu_{t,n}(w)-1|\lesssim t|w|^2 e^{-\kappa t|w|^2}$ in the second inequality.
Since $\delta\leq 2$ and the support of $\eta^1_{\rho_n}$ is separated from $0$ uniformly in $n$, the supremum is bounded uniformly in $n$, finishing the proof of \eqref{useful2} in the case $j\neq 0,J_n+1$.

For $j=J_n+1$ we only have to take slight care when replacing the summation in $k$, but this is done precisely as in the proof of Lemma \ref{lem:discrete-semigroup1}.

Finally, for $j=0$, we simply have for any smooth function $\eta$ that is constant $1$ on the support of $\phi^0_{\rho_n}$ and is supported on, say, $B_3$, that
\begin{equ}
(\cP^n_t f-f\big)^{[0],n}  = \Big(\sum_{k=-2}^{2} \eta(k) (\tilde \mu_{t,n}(k)-1)\delta e_k \Big) *_n (f^{[0],n})
\end{equ}
and 
\begin{equ}
\big\|\sum_{k=-2}^{2} \eta(k) (\tilde \mu_{t,n}(k)-1)\delta e_k\big\|_{L^1(\Pi_n)}
\end{equ}
follows from $|\tilde\mu_{t,n}(w)-1|\lesssim t|w|^2 e^{-\kappa t|w|^2}\lesssim t^{\delta/2}$ for any $k\in\{-2,\ldots,2\}$ and $\delta\in[0,2]$
\end{proof}

\begin{lemma}\label{lem:discrete-Schauder}
Let $f,g:\Lambda_n\times\Pi_n\to\R$ satisfy for all $(t,x)\in\Lambda_n\times\Pi_n$
\begin{equ}
f_t(x)=\int_0^t\cP^n_{\kappa_n(t-s)}g_{\kappa_n(s)}(x)\,ds.
\end{equ}
Let further $\alpha\in\R$, $p,q\in[1,\infty]$, and $\delta>0$. Then there exists a constant $N=N(c)$  such that for all
 $t\in\Lambda_n\cap[0,1]$ one has the bound
\begin{equ}\label{eq:discrete-Schauder}
\|f_t\|_{B^{\alpha+2}_{p,q}(\Pi_n)}\leq \sup_{s\in\Lambda_n\cap[0,t)}\|g_s\|_{B^{\alpha}_{p,q}(\Pi_n)}.
\end{equ}
\end{lemma}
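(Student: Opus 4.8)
The plan is to work block by block in the discrete Littlewood--Paley decomposition and to reduce the estimate to a single one-dimensional kernel bound together with an elementary geometric summation. Since $\cP^n_r$ and the block projection $w\mapsto w^{[j],n}$ are both Fourier multipliers on $C(\Pi_n)$, diagonal in the orthonormal basis $\delta e_k$, they commute; hence $f_t^{[j],n}=\int_0^t\cP^n_{\kappa_n(t-s)}(g^{[j],n}_{\kappa_n(s)})\,ds$. Writing $t=Mh\in\Lambda_n$, the integrand is piecewise constant in $s$ on the intervals $[mh,(m+1)h)$, so this integral is literally a finite Riemann sum,
\begin{equ}
f_t^{[j],n}=h\sum_{\ell=0}^{M-1}\cP^n_{\ell h}\big(g^{[j],n}_{(M-1-\ell)h}\big),
\end{equ}
and one notes that $\{(M-1-\ell)h:0\le\ell\le M-1\}=\Lambda_n\cap[0,t)$.

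The technical heart I would establish first is a uniform exponential smoothing bound: there are constants $N,\kappa''>0$ depending only on $c$ such that for all $n\ge 2$, $1\le j\le J_n+1$, $r\in\Lambda_n$ and $w\in C(\Pi_n)$,
\begin{equ}
\|\cP^n_r(w^{[j],n})\|_{L^p(\Pi_n)}\le N e^{-\kappa'' r2^{2j}}\|w^{[j],n}\|_{L^p(\Pi_n)}.
\end{equ}
This is proved exactly like \eqref{usefull estimate} in the proof of Lemma~\ref{lem:discrete-semigroup1}: by Young's inequality it suffices to bound $\|\sum_k\eta^j_{\rho_n}(k)\tilde\mu_{r,n}(k)\delta e_k\|_{L^1(\Pi_n)}$, and the computation \eqref{long1}--\eqref{long2} does this, except that I would now exploit that $\supp\eta^1_{\rho_n}\subset\{|z|\ge\theta_0\}$ with $\theta_0>0$ independent of $n$ to write $|\tilde\mu_{r,n}(2^jz)|\le e^{-\kappa r2^{2j}|z|^2}\le e^{-\kappa\theta_0^2 r2^{2j}}$ on that support, and absorb the polynomial prefactors $1+r2^{2j}+(r2^{2j})^2$ coming from the two $z$-derivatives of $\tilde\mu_{r,n}(2^j\,\cdot\,)$ (cf.\ \eqref{eq:mu-derivatives}) into the exponential at the cost of halving its rate. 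The highest block $j=J_n+1$ is handled by the periodization of Remark~\ref{rem:PL}, exactly as in Lemma~\ref{lem:discrete-semigroup1}. Any summable decay in $r2^{2j}$ would in fact suffice below.

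Given this, the remainder is bookkeeping. Inserting $r=\ell h$ into the Riemann sum and multiplying by $2^{(\alpha+2)j}$, for $1\le j\le J_n+1$ I obtain $2^{(\alpha+2)j}\|f_t^{[j],n}\|_{L^p(\Pi_n)}\le N\big(h\sum_{\ell=0}^{M-1}2^{2j}e^{-\kappa''\ell h2^{2j}}\big)\sup_{r\in\Lambda_n\cap[0,t)}2^{\alpha j}\|g^{[j],n}_r\|_{L^p(\Pi_n)}$, and the geometric sum is at most $h2^{2j}(1-e^{-\kappa'' h2^{2j}})^{-1}$. The crucial observation is that for admissible $j$ one has $h2^{2j}\le h2^{2(J_n+1)}=c\rho_n^{-2}\le c$ (using $h=c(2n)^{-2}$ and $2^{J_n}=n\rho_n^{-1}$), whence $1-e^{-\kappa'' h2^{2j}}\ge\kappa'' h2^{2j}e^{-\kappa'' c}$ and the whole prefactor is $\le e^{\kappa'' c}/\kappa''$, uniformly in $n,j,t$. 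For the lowest block $j=0$ I use instead that $\cP^n_r$ is an $L^p(\Pi_n)$-contraction and $t\le 1$, giving $\|f_t^{[0],n}\|_{L^p(\Pi_n)}\le t\sup_r\|g^{[0],n}_r\|_{L^p(\Pi_n)}\le\sup_r\|g^{[0],n}_r\|_{L^p(\Pi_n)}$. In all cases, with $N'=N'(c)$,
\begin{equ}
2^{(\alpha+2)j}\|f_t^{[j],n}\|_{L^p(\Pi_n)}\le N'\sup_{r\in\Lambda_n\cap[0,t)}2^{\alpha j}\|g^{[j],n}_r\|_{L^p(\Pi_n)}\qquad\text{for all }j\ge 0.
\end{equ}

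Finally I take the $\ell^q$-norm over $j$. When $q=\infty$ the two suprema interchange and one lands exactly on $N'\sup_{r\in\Lambda_n\cap[0,t)}\|g_r\|_{B^\alpha_{p,\infty}(\Pi_n)}$, which is the asserted bound. For $q<\infty$ the supremum in $r$ cannot be extracted from an $\ell^q$-sum without cost, and this is precisely where the hypothesis parameter $\delta>0$ enters: combining \eqref{second} with $\|\cdot\|_{B^\beta_{p,q}}\le\|\cdot\|_{B^\beta_{p,1}}$ one passes through the $q=\infty$ estimate at the price of an arbitrarily small loss of regularity, yielding $\|f_t\|_{B^{\alpha+2-\delta}_{p,q}(\Pi_n)}\lesssim_\delta\|f_t\|_{B^{\alpha+2}_{p,\infty}(\Pi_n)}\le N'\sup_{r\in\Lambda_n\cap[0,t)}\|g_r\|_{B^\alpha_{p,q}(\Pi_n)}$. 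The only genuinely nontrivial ingredient is the uniform exponential (or summable) decay of the discrete heat multiplier on each block, in particular on the top one; the summation and the $\ell^q$-aggregation are routine, the latter modulo the $\delta$-loss just described.
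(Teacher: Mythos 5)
Your proof is correct and takes a genuinely cleaner route than the paper's. Both arguments begin identically, commuting $\cP^n_r$ with the block projection and writing $f_t^{[j],n}$ as the Riemann sum $h\sum_{\ell=0}^{M-1}\cP^n_{\ell h}(g^{[j],n}_{(M-1-\ell)h})$. The paper then follows the classical argument of \cite[Lem.~A.9]{GIP}: it splits the sum at a block-dependent intermediate index $\ell(j)$, uses the \emph{polynomial} decay bound \eqref{usefull estimate} (with exponent $\delta=1+\eps$) for the far part and the $L^p$-contraction for the near part, and finally optimises $\ell(j)\approx k-h^{-1}2^{-2j}$. You instead prove the sharper, \emph{exponential} multiplier bound $\|\cP^n_r(w^{[j],n})\|_{L^p}\lesssim e^{-\kappa'' r2^{2j}}\|w^{[j],n}\|_{L^p}$ by the same Fourier computation as in \eqref{long1}--\eqref{long2}, exploiting that $\supp\eta^1_{\rho_n}$ is bounded away from zero uniformly in $n$. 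This lets you sum the geometric series directly and makes the role of the CFL-type constraint $h2^{2j}\le c\rho_n^{-2}\le c$ (which comes from $h=c(2n)^{-2}$ and $2^{J_n}=n\rho_n^{-1}$) completely explicit as the mechanism that keeps the prefactor $h2^{2j}/(1-e^{-\kappa'' h2^{2j}})$ bounded uniformly in $n,j,t$. Your version also absorbs the small cases $t=0,h$ automatically rather than treating them separately, as the paper does. What the paper's split buys is independence from the exact decay rate (any $\delta>1$ in \eqref{usefull estimate} works); what your version buys is a shorter proof with no optimisation step.

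You also correctly flag a subtlety that the paper glosses over: extracting the supremum over the time variable from an $\ell^q$-sum over $j$ is only lossless when $q=\infty$, and for $q<\infty$ one must pay a $\delta$-loss via \eqref{second}. The paper's displayed chain of inequalities at this point keeps the index $m$ dangling inside the $\ell^q$-norm, which only parses as a clean argument for $q=\infty$. In practice this is harmless, since the lemma is invoked exclusively with $q=\infty$ (in Lemma~\ref{lem:apriori-intermediate} and twice in the proof of Theorem~\ref{thm:main}), and the unused $\delta>0$ in the statement likely reflects an awareness of exactly this issue; your reading of it as the allowed regularity loss for $q<\infty$ is the natural one.
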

\begin{proof}
The case $t=0$ is trivial.
For $t=kh$, $k\geq 1$, we follow the classical argument, see e.g. \cite[Lem.~A.9]{GIP}, and split the integral at an intermediate time $\ell h$, $\ell<k$.
One writes
\begin{equs}
f_t^{[j],n}
&=\int_0^{t}\cP_{\kappa_n(t-s)}^ng_{\kappa_n(s)}^{[j],n}\,ds
\\
&=h\sum_{m=0}^{k-1}\cP^n_{(k-1-m)h}g^{[j],n}_{mh}
\\
&=
h\sum_{m=0}^{\ell-1}\cP^n_{(k-1-m)h}g^{[j],n}_{mh}
+h\sum_{m=\ell}^{k-1}\cP^n_{(k-1-m)h}g^{[j],n}_{mh},
\end{equs}
and estimates the two terms as follows.

For the first, fix any $\eps>0$. Using \eqref{usefull estimate} from the proof of Lemma~\ref{lem:discrete-semigroup1} with $\delta=1+\eps$ one finds
\begin{equs}
\big\|h\sum_{m=0}^{\ell-1}\cP^n_{(k-1-m)h}g^{[j],n}_{mh}\big\|_{L^p(\Pi_n)}&\leq h\sum_{m=0}^{\ell-1} \big\|\cP^n_{(k-1-m)h}g^{[j],n}_{mh}\big\|_{L^p(\Pi_n)} \\
&\leq h\sum_{m=0}^{\ell-1}  ((k-1-m)h)^{-(1+\eps)}2^{-2j(1+\eps)} \| g^{[j],n}_{mh}\|_{{L^p(\Pi_n)}}\\
&\leq 2^{-2j(1+\eps)}h^{-\eps}\sum_{m=0}^{\ell-1}  (k-1-m)^{-(1+\eps)}   \| g^{[j],n}_{mh}\|_{{L^p(\Pi_n)}}\\
&\leq 2^{-2j(1+\eps)}h^{-\eps} \sum_{m=k-\ell}^{k-1} \frac{1}{m^{1+\eps}}   \| g^{[j],n}_{(k-1-m)h}\|_{{L^p(\Pi_n)}} \ .
\end{equs}
For the second term one simply writes
\begin{equ}
\|h\sum_{m=\ell}^{k-1}\cP^n_{(k-1-m)h}g^{[j],n}_{mh}\|_{{L^p(\Pi_n)}} \leq h\sum_{m=\ell}^{k-1} \| g^{[j],n}_{mh}\|_{{L^p(\Pi_n)}}. 
\end{equ}
Therefore, for any function $j\mapsto \ell(j)$ the two bounds above give
\begin{equs}
&\| 2^{(\alpha+2)j}\|f_t^{[j],n}\|_{L^p{(\Pi_n})} \|_{\ell^q} \\
&\leq\Big\|  2^{-2j\eps}h^{-\eps}\sum_{m=k-\ell(j)}^{k-1} \frac{1}{m^{1+\eps}}  2^{\alpha j} \| g^{[j],n}_{(k-1-m)h}\|_{{L^p(\Pi_n)}} + h2^{2j}\sum_{m=\ell(j)}^{k-1} 2^{\alpha j}\| g^{[j],n}_{mh}\|_{{L^p(\Pi_n)}} \Big\|_{\ell^q} \\
& \leq\Big\|2^{-2j\eps}h^{-\eps}\sum_{m=k-\ell(j)}^{k-1} \frac{1}{m^{1+\eps}}\Big\|_{\ell^\infty} \| 2^{\alpha j} \| g^{[j],n}_{(k-1-m)h}\|_{{L^p(\Pi_n)}} \|_{\ell^q}  
\\
&\qquad\qquad+\Big\| h2^{2j}\sum_{m=\ell(j)}^{k-1} 1\Big\|_{\ell^\infty} \| 2^{\alpha j}\| g^{[j],n}_{mh}\|_{{L^p(\Pi_n)}} \|_{\ell^q} \\
&\lesssim  \Big( \mathbf{1}_{\ell(j)>0} \Big\|\frac{2^{-2\eps j}}{\big(h(k-\ell(j))\big)^{\eps}}\Big\|_{\ell^\infty}+  \Big\| h2^{2j}(k-\ell(j)) \Big\|_{\ell^\infty}\Big) \sup_{s\in\Lambda_n\cap[0,t)}\|g_s\|_{B^{\alpha}_{p,q}(\Pi_n)}\ .
\end{equs}

Making the choice $\ell(j)= (k-\lceil h^{-1}2^{-2j}\rceil) \vee 0$, the quantity in the big parentheses is of order $1$. Indeed, if $\ell(j)=0$ the first term in the parenthesis does not contribute and in this case $k \leq \lceil h^{-1}2^{-2j}\rceil$ which implies that the second term is bounded, and if $\ell(j)= (k-\lceil h^{-1}2^{-2j}\rceil)$ both terms are clearly bounded as well.
\end{proof}

\subsection{Heat kernel error bounds}
We recall $3$ error bounds for heat kernels from \cite{Mate}.
The following is \cite[Lem.~2.2.9]{Mate}.
\begin{lemma}     \label{lem:det-rate}
For any  $\alpha \in (0, 1)$ there exists a constant $N=N(\alpha,c)$ such that for all $\psi \in C^\alpha(\T)$, $t \in [0,1]$, and  $y \in \T$, we have 
\begin{equs}
| \cP^n_t \psi(y) - \cP_t \psi (y) | \leq N n^{-\alpha} \| \psi \|_{C^\alpha(\T)}.    \label{eq:rate-deterministic}
\end{equs}
\end{lemma}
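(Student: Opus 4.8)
The plan is to reduce the claim to a pointwise estimate on the difference of the two heat kernels, tested against the regularity seminorm of $\psi$, by expanding both semigroups in the Fourier/eigenfunction basis. Concretely, since $\cP_t\psi = \sum_{j\in\Z} e^{\lambda_j t}\scal{\psi,e_j}e_j$ and $\cP^n_t\psi$ agrees with this on the grid but truncated to $|j|\le n$ and with $e^{\lambda_j t}$ replaced by $(1+h\lambda^n_j)^{t/h}$, the error naturally splits into a \emph{high-frequency tail} $\sum_{|j|\ge n}e^{\lambda_j t}\scal{\psi,e_j}e_j(y)$ and a \emph{low-frequency symbol-error} $\sum_{|j|<n}\big((1+h\lambda^n_j)^{t/h}-e^{\lambda_j t}\big)\scal{\psi,e_j}e_j(y)$. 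One subtlety already visible from Remark~\ref{rem:symm1} is the aliasing at $j=\pm n$; this only affects a single mode and is harmless, but I would be careful to state the decomposition using $\delta e_j$ so that $\delta e_{-n}=\delta e_n$ is accounted for.

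For the high-frequency tail, the key input is that $\psi\in C^\alpha(\T)$ implies a decay on the Fourier coefficients \emph{in an averaged sense} (or, more robustly, a Besov-type statement $\psi\in B^\alpha_{\infty,\infty}$), so that the Littlewood--Paley blocks $\psi^{[k]}$ satisfy $\|\psi^{[k]}\|_{L^\infty}\lesssim 2^{-\alpha k}\|\psi\|_{C^\alpha}$. Grouping the frequencies $|j|\sim 2^k$ with $2^k\gtrsim n$ and using $e^{\lambda_j t}=e^{-4\pi^2 j^2 t}\le e^{-c\,4^k t}$, one bounds the tail by $\sum_{2^k\gtrsim n}e^{-c\,4^k t}2^{-\alpha k}\|\psi\|_{C^\alpha}$; since $e^{-c\,4^k t}\le 1$ this is $\lesssim n^{-\alpha}\|\psi\|_{C^\alpha}$ uniformly in $t\in[0,1]$ (the worst case being $t=0$, where the tail is literally the high-frequency part of $\psi$, of size $\lesssim n^{-\alpha}$). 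For the low-frequency symbol error, I would use the elementary pointwise comparison $|(1+h\lambda^n_j)^{t/h}-e^{\lambda_j t}|$ for $|j|<n$, $t\in[0,1]$: writing $(1+h\lambda^n_j)^{t/h}=e^{(t/h)\log(1+h\lambda^n_j)}$ and Taylor-expanding the logarithm (legitimate since $1+h\lambda^n_j\ge 1/2$ by the choice of $c$), one gets $(t/h)\log(1+h\lambda^n_j)=\lambda^n_j t(1+O(h|\lambda^n_j|))=\lambda^n_j t + O(t\,h\,|\lambda^n_j|^2)$, and then combining with $|\lambda^n_j-\lambda_j|\le \tfrac13 (j\pi/n)^2|\lambda_j|$ from \eqref{eq:gamma-prop2} and $h|\lambda^n_j|\lesssim (j/n)^2$, both correction terms are of size $\lesssim (j/n)^2\cdot(|\lambda_j|t)$; using $x e^{-cx}\lesssim 1$ with $x=|\lambda_j|t$ absorbs the exponential and yields $|(1+h\lambda^n_j)^{t/h}-e^{\lambda_j t}|\lesssim (j/n)^2 e^{-c j^2 t/2}$. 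Summing over the dyadic blocks $|j|\sim 2^k$ with $2^k\le n$, weighted by $\|\psi^{[k]}\|_{L^\infty}\lesssim 2^{-\alpha k}$, gives $\sum_{2^k\le n}(2^k/n)^2 2^{-\alpha k}\|\psi\|_{C^\alpha}\lesssim n^{-2}\sum_{2^k\le n}2^{(2-\alpha)k}\|\psi\|_{C^\alpha}\lesssim n^{-2}n^{2-\alpha}\|\psi\|_{C^\alpha}=n^{-\alpha}\|\psi\|_{C^\alpha}$, since $2-\alpha>0$.

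The main obstacle, and the point where I would spend the most care, is the rigorous bookkeeping of the symbol-error estimate uniformly down to $t=0$ and uniformly over all $|j|<n$ — in particular making sure the Taylor remainder in $\log(1+h\lambda^n_j)$ is genuinely controlled (this uses $c\le 1/8$, hence $h|\lambda^n_j|\le h\cdot 16n^2=16c/4\le 1/2$) and that the factor $|\lambda_j|t$ one extracts can be killed by the Gaussian $e^{-\kappa j^2 t}$ coming from \eqref{eq:exp-discrete}, while still retaining enough Gaussian decay to sum the series. A clean way to organise all of this is to prove the two pieces at the level of Littlewood--Paley blocks, exactly mirroring the structure of Lemmas~\ref{lem:discrete-semigroup1} and~\ref{lem:discrete-semigroup2}: bound $\|(\cP^n_t\psi-\cP_t\psi)^{[k]}\|_{L^\infty}$ by $\min(1,(2^k/n)^2)\cdot 2^{-\alpha k}\|\psi\|_{C^\alpha}$ up to the harmless top-mode correction, then sum over $k$. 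Everything else is a routine estimate, and the final bound is uniform in $t\in[0,1]$ and $y\in\T$ as claimed, with a constant depending only on $\alpha$ and $c$.
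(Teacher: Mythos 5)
There is a genuine gap in the proposed decomposition: you write $\cP^n_t\psi(y)$ as $\sum_{|j|<n}(1+h\lambda^n_j)^{t/h}\scal{\psi,e_j}e_j(y)$, i.e.\ using the \emph{continuous} Fourier coefficients $\scal{\psi,e_j}$. But by definition $\cP^n_t\psi(y)=\tfrac{1}{2n}\sum_{z\in\Pi_n}p^n_t(y,z)\psi(z)$ only sees the grid restriction $\delta\psi$, and its coefficient in mode $j$ is $\scal{\delta\psi,\delta e_j}_n=\sum_{m\in\Z}\scal{\psi,e_{j+2nm}}$, not $\scal{\psi,e_j}$. The difference $\sum_{m\neq 0}\scal{\psi,e_{j+2nm}}$ is the aliasing contribution, and it appears at \emph{every} mode $j\in\{-n,\ldots,n-1\}$, not just at the Nyquist mode $j=\pm n$ flagged in Remark~\ref{rem:symm1}. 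Consequently your error splits into \emph{three} pieces — high-frequency tail of $\cP_t\psi$, low-frequency symbol error, and the aliasing term $\sum_{|j|<n}(1+h\lambda^n_j)^{t/h}\bigl(\sum_{m\neq 0}\scal{\psi,e_{j+2nm}}\bigr)e_j(y)$ — and the third is entirely absent from your argument. It is not negligible: at $t=0$ it is precisely the gap between the trigonometric interpolant $\iota\delta\psi$ and the spectral truncation of $\psi$, and the naive pointwise bound $|\scal{\psi,e_k}|\lesssim|k|^{-\alpha}$ does not even make the aliasing sums absolutely convergent for $\alpha\le 1$. (Note also that at grid points $y\in\Pi_n$ one has $e_{j+2nm}(y)=e_j(y)$, so there the aliasing can be folded together with the high-frequency tail into a single periodised multiplier and the argument can be repaired; away from the grid this does not happen and the problem persists. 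Indeed the regime $t=0$, $y\notin\Pi_n$ is exactly trigonometric interpolation of a Hölder function, for which the classical Lebesgue-constant bound only yields $O(n^{-\alpha}\log n)$, so you should convince yourself the aliasing is genuinely controllable there before believing the sum closes.)

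A secondary issue, which you partly acknowledge in the last paragraph: in the block-by-block estimate you repeatedly bound an $L^\infty\to L^\infty$ Fourier multiplier by the $L^\infty$ norm of its symbol (e.g.\ passing from $\|\psi^{[k]}\|_{L^\infty}\lesssim 2^{-\alpha k}$ and the pointwise inequality on $(1+h\lambda^n_j)^{t/h}-e^{\lambda_j t}$ directly to a bound on the block of the error). This is not legitimate without a Mikhlin/Bernstein-type $L^1$ bound on the corresponding convolution kernel. You do say at the end that one should organise the proof as in Lemmas~\ref{lem:discrete-semigroup1}–\ref{lem:discrete-semigroup2}, which is the right fix, but as written the middle of your argument is not a proof. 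Combined with the missing aliasing term, the proposal would need substantial rework: either carry the aliasing term through the Littlewood–Paley machinery (which requires extra care because it couples low output frequencies to high input frequencies, and appears to cost a logarithm away from grid points), or abandon Fourier decomposition of $\psi$ and prove the statement by a direct kernel estimate on $\|p_t(y,\cdot)-p^n_{\kappa_n(t)}(y,\rho_n(\cdot))\|_{L^1(\T)}$ weighted by $|\cdot-y|^\alpha$, which is closer in spirit to Lemma~\ref{lem:Pn-P} and to the cited source.
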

The following is \cite[Lem.~2.2.7]{Mate} (see also\cite[Lemma~3.3]{Gy0}).
\begin{lemma}   \label{lem:Pn-P}
Let $\beta \in [0, 2]$. Then there exists a constant $N(\beta , c)$ such that 
for all $t \in  [h, 1]$, $x \in \T$ one has the bound
\begin{equs}\label{eq:Pn-P}
\big\| p_t(x,\cdot)-p^{n}_{\kappa_n(t)}(x,\rho_n(\cdot)) \big\|^2_{L^2(\T)} \leq  N n^{-\beta} t^{-(\beta+1)/2}.
\end{equs} 
\end{lemma}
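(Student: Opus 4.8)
Here is how I would approach the proof of Lemma~\ref{lem:Pn-P}.

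The plan is to carry out the classical comparison argument with a single intermediate object: the trigonometric polynomial $P^n_s(x,y):=\sum_{j=-n}^{n-1}(1+h\lambda^n_j)^{s/h}e_j(x-y)$, regarded as a function of $y\in\T$, which agrees with $p^n_s(x,\cdot)$ on $\Pi_n$ (so that $P^n_{\kappa_n(t)}(x,\rho_n(y))=p^n_{\kappa_n(t)}(x,\rho_n(y))$). Using $\|a+b\|^2\le 2\|a\|^2+2\|b\|^2$ it suffices to bound by $N_{\beta,c}\,n^{-\beta}t^{-(\beta+1)/2}$ the two quantities $\mathrm{(A)}:=\|p_t(x,\cdot)-P^n_{\kappa_n(t)}(x,\cdot)\|_{L^2(\T)}^2$, which carries the discretisation error of the heat semigroup, and $\mathrm{(B)}:=\|P^n_{\kappa_n(t)}(x,\cdot)-P^n_{\kappa_n(t)}(x,\rho_n(\cdot))\|_{L^2(\T)}^2$, which carries the spatial rounding. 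Recurring ingredients are: $\kappa_n(t)\ge t/2$ for $t\ge h$; the subexponential bound $(1+h\lambda^n_j)^{\kappa_n(t)/h}\le e^{-\kappa\kappa_n(t)j^2}$ from \eqref{eq:exp-discrete}; the eigenvalue comparison \eqref{eq:gamma-prop1}--\eqref{eq:gamma-prop2}; $h|\lambda^n_j|\le 4c\le\tfrac{1}{2}$; the Gaussian-sum bound $\sum_{j\in\Z}|j|^\gamma e^{-aj^2}\lesssim_\gamma a^{-(\gamma+1)/2}$ for $a\in(0,1]$; and the interpolation inequality $1\wedge x\le x^{\beta/2}$ valid for $\beta\in[0,2]$, $x\ge0$.

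For $\mathrm{(A)}$, since $\{e_j\}_{j\in\Z}$ is an orthonormal basis of $L^2(\T;\C)$ and both kernels have explicit Fourier expansions in it — with $P^n_{\kappa_n(t)}(x,\cdot)$ supported on $|j|\le n$, so that no aliasing occurs on $\T$ — Parseval reduces $\mathrm{(A)}$, up to the harmless reindexing at $j=\pm n$ (cf.\ Remark \ref{rem:symm1}), to $\sum_{|j|\le n}\big|e^{\lambda_jt}-(1+h\lambda^n_j)^{\kappa_n(t)/h}\big|^2+\sum_{|j|>n}e^{2\lambda_jt}$. The tail is $\le e^{-4\pi^2n^2t}\sum_{j\in\Z}e^{-4\pi^2j^2t}\lesssim_\beta (n^2t)^{-\beta/2}t^{-1/2}$ by $e^{-x}\lesssim_\beta x^{-\beta/2}$. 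For the main sum I would estimate the single-mode error telescopically in three steps — replacing $t$ by $\kappa_n(t)$ ($|t-\kappa_n(t)|\le h$), then $\lambda_j$ by $\lambda^n_j$ ($|\lambda_j-\lambda^n_j|\lesssim j^4/n^2$), then $e^{h\lambda^n_j}$ by $1+h\lambda^n_j$ (from $|\log(1+u)-u|\le u^2$ for $|u|\le\tfrac{1}{2}$) — each step by an elementary one-variable estimate (mean value theorem / binomial telescoping) together with the subexponential bounds, which yields $\big|e^{\lambda_jt}-(1+h\lambda^n_j)^{\kappa_n(t)/h}\big|\lesssim_c(1\wedge j^2/n^2)e^{-c_0j^2t}$ for a fixed $c_0>0$; then the main sum is $\lesssim n^{-2\beta}\sum_j|j|^{2\beta}e^{-2c_0j^2t}\lesssim_{\beta,c}n^{-2\beta}t^{-(2\beta+1)/2}\le N_{\beta,c}\,n^{-\beta}t^{-(\beta+1)/2}$, the last step because $t\ge h\gtrsim_c n^{-2}$.

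For $\mathrm{(B)}$ — the one genuinely delicate point — the triangle-inequality bound $\mathrm{(B)}^{1/2}\le\sum_j\big|(1+h\lambda^n_j)^{\kappa_n(t)/h}\big|\,\|e_j-e_j\circ\rho_n\|_{L^2(\T)}\lesssim n^{-1}\sum_j|j|e^{-c_0j^2t}\lesssim n^{-1}t^{-1}$ only gives $\mathrm{(B)}\lesssim n^{-2}t^{-2}$, which for $\beta=2$ is strictly worse than the claimed $n^{-2}t^{-3/2}$ since $t\le1$: the cancellation between Fourier modes must be kept. Instead, with $F:=P^n_{\kappa_n(t)}(x,\cdot)$ and $\tilde F:=F\circ\rho_n$, one has the exact identities $\|\tilde F\|_{L^2(\T)}^2=\tfrac{1}{2n}\sum_{z\in\Pi_n}|F(z)|^2=\|F\|_{L^2(\Pi_n)}^2=\sum_{j=-n}^{n-1}(1+h\lambda^n_j)^{2\kappa_n(t)/h}=\|F\|_{L^2(\T)}^2$ (Parseval on $\Pi_n$ and on $\T$), while $\int_0^{1/(2n)}e^{-2\pi ijv}\,dv=(2n)^{-1}e^{-\pi ij/(2n)}\sqrt{\gamma^n_j}$ together with $\sum_{z\in\Pi_n}e_k(z)=2n\,\mathbf{1}[2n\mid k]$ gives $\langle F,\tilde F\rangle_{L^2(\T)}=\sum_{j=-n}^{n-1}(1+h\lambda^n_j)^{2\kappa_n(t)/h}e^{-\pi ij/(2n)}\sqrt{\gamma^n_j}$. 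Hence $\mathrm{(B)}=\|F\|^2-2\Re\langle F,\tilde F\rangle+\|\tilde F\|^2=2\sum_{j=-n}^{n-1}(1+h\lambda^n_j)^{2\kappa_n(t)/h}\big(1-\cos\tfrac{\pi j}{2n}\,\sqrt{\gamma^n_j}\big)$, and since $0\le1-\cos\tfrac{\pi j}{2n}\,\sqrt{\gamma^n_j}\le\big(1-\cos\tfrac{\pi j}{2n}\big)+\big(1-\gamma^n_j\big)\lesssim j^2/n^2$ and is also $\le1$, this quantity is $\lesssim(|j|/n)^\beta$ for $\beta\in[0,2]$; using \eqref{eq:exp-discrete} once more, $\mathrm{(B)}\lesssim n^{-\beta}\sum_j|j|^\beta e^{-2\kappa\kappa_n(t)j^2}\lesssim_{\beta,c}n^{-\beta}\kappa_n(t)^{-(\beta+1)/2}\lesssim n^{-\beta}t^{-(\beta+1)/2}$.

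The main obstacle is exactly the point just highlighted: recognising that the rounding term $\mathrm{(B)}$ must be handled through the exact Parseval identity and not the triangle inequality. Everything else is routine, including the harmless aliasing $\delta e_n=\delta e_{-n}$ on $\Pi_n$ (cf.\ Remark \ref{rem:symm1}), which affects neither $\mathrm{(A)}$ — where only the genuine $\T$-basis appears — nor the Parseval computation in $\mathrm{(B)}$, where the index $j=-n$ contributes at most one extra bounded summand.
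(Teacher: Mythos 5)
The paper does not actually prove this lemma: it imports it verbatim from \cite{Mate} as Lemma~2.2.7 (noting also \cite{Gy0}), so there is no in-paper argument for me to compare against. I have therefore checked your proof on its own merits, and it is correct and self-contained.

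Both halves check out. For $\mathrm{(A)}$, the single-mode telescoping bound $\big|e^{\lambda_j t}-(1+h\lambda^n_j)^{\kappa_n(t)/h}\big|\lesssim_c(1\wedge j^2/n^2)e^{-c_0 j^2 t}$ is right (each of the three replacements costs $\lesssim j^2/n^2$ after absorbing factors of $j^2 t$ into a slightly smaller Gaussian exponent, using $h\lesssim n^{-2}$, $h|\lambda^n_j|\le 4c\le 1/2$, $\kappa_n(t)\ge t/2$, and \eqref{eq:gamma-prop2}), and the step from $n^{-2\beta}t^{-(2\beta+1)/2}$ down to $n^{-\beta}t^{-(\beta+1)/2}$ via $n^2 t\gtrsim_c 1$ is exactly the right place to use $t\ge h$. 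For $\mathrm{(B)}$, the Parseval identity $\mathrm{(B)}=2\sum_{j=-n}^{n-1}(1+h\lambda^n_j)^{2\kappa_n(t)/h}\big(1-\cos\tfrac{\pi j}{2n}\sqrt{\gamma^n_j}\big)$ is computed correctly (the off-diagonal inner products indeed vanish because $\sum_{z\in\Pi_n}e_{k-j}(z)=0$ for $j\ne k$ in $\{-n,\ldots,n-1\}$), the bracket is $\ge 0$ and $\lesssim 1\wedge j^2/n^2\le(|j|/n)^\beta$, and the Gaussian tail sum gives $n^{-\beta}\kappa_n(t)^{-(\beta+1)/2}$. You correctly identify that the triangle inequality loses a $t^{1/2}$ at the endpoint $\beta=2$ and that keeping the $\ell^2$ orthogonality is the one non-routine point; this is exactly where a careless argument would fall short of the sharp power of $t$. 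Two cosmetic caveats, neither of which affects the result: the index split at $j=\pm n$ should formally be over $j\in\{-n,\ldots,n-1\}$ versus its complement (you already flag this), and the Gaussian sum bound $\sum_j|j|^\gamma e^{-aj^2}\lesssim_\gamma a^{-(\gamma+1)/2}$ needs $a$ bounded above, which holds here since $a=2\kappa\kappa_n(t)\le 2\kappa(c)$ rather than $a\le 1$ as you wrote.
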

The following is a crucial tool from \cite[Lem.~3.3.1]{Mate} that we alluded to in Remark \ref{rem:BDG}.
Note that pulling the norm inside the integral in \eqref{eq:names} gives only a rate $1/2$, therefore using the regularisation of the noise is essential (and is done by stochastic sewing \cite{Khoa} in \cite{Mate}).
We remark that technically \cite[Lem.~3.3.1]{Mate} is stated with $w$ below defined by $\cP^n$ and $O^n$ in place of $\cP$ and $O$; the choice we make here would only make the proof easier.
Set
\begin{equs}\label{eq:w}
w_t(x)= \cP_t  \psi(x) + O_t(x).
\end{equs}
\begin{lemma}\label{lem:integral}
Let $p\geq 2$ and $\eps\in(0,1/4)$, and suppose that for some constant $K$ one has $\|\psi\|_{C^{1/2}(\T)}\leq K$.
Then there exists a constant  $N=N(p, \eps,c, K)$  such that  for all bounded measurable function $g:\R\to\R$ and all $n\in\N$, one has the bound
\begin{equs}
\sup_{t\in[0,1]}\sup_{x\in \T}\Big\|\int_0^t\int_T & p_{t-r}(x,y)\Big( g\big(w_r(y)\big)-g\big(w_{\kappa_n(r)}(\rho_n(y)\big)\Big)\,dy\,dr\Big\|_{L^p(\Omega)}
\\
&\leq N\|g\|_{L^\infty(\R)}n^{-1+\eps}.
\label{eq:names}
\end{equs}
\end{lemma}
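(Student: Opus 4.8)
The plan is to reduce the claim to the stochastic sewing estimate of \cite{Mate} via an explicit coupling of the terms, treating the difference $g(w_r(y))-g(w_{\kappa_n(r)}(\rho_n(y)))$ as the ``defect'' of a random function along the parabolic grid and exploiting the averaging against the heat kernel $p_{t-r}(x,y)$.
First I would fix $(t,x)$, set $A_{s,s'}:=\E^{s}\big[\int_s^{s'}\int_\T p_{t-r}(x,y)\big(g(w_r(y))-g(w_{\kappa_n(r)}(\rho_n(y)))\big)\,dy\,dr\big]$ for grid-compatible times $s<s'$ (here $\E^s$ denotes conditional expectation given $\mcF_s$), and check the two-parameter consistency hypotheses of the stochastic sewing lemma \cite{Khoa}. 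The point is that $w$ is a sum of the deterministic term $\cP_r\psi$, which is smooth for $r$ bounded away from $0$, and the Ornstein–Uhlenbeck-type Gaussian field $O_r(y)$, whose increments in $r$ and $y$ have Gaussian tails with the parabolic scaling $|r-\kappa_n(r)|^{1/2}+|y-\rho_n(y)|\lesssim n^{-1}$ on the grid. Since $g$ is merely bounded and measurable, one cannot use any regularity of $g$; instead, as in \cite{Mate}, the gain comes entirely from the fact that conditionally on $\mcF_s$, the field $w_r(y)$ for $r$ slightly larger than $s$ has a nondegenerate Gaussian component, so that $\E^s[g(w_r(y))-g(w_{\kappa_n(r)}(\rho_n(y)))]$ is controlled by $\|g\|_{L^\infty}$ times the total-variation distance between two nearby Gaussians, which is of order $n^{-1}(r-s)^{-1/2}$ after integrating the heat kernel in $y$.

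The key steps, in order: (i) Split $w=w^{\mathrm{det}}+O$ and observe that, thanks to Lemma~\ref{lem:det-rate}-type bounds and the $C^{1/2}$ assumption on $\psi$, the contribution of the deterministic part is handled by a direct (non-sewing) estimate, so we may assume $\psi\equiv 0$, i.e.\ $w=O$. (ii) Recall from \eqref{eq:O Fourier} that $O_r(y)$ is Gaussian; compute, for fixed $s$, the conditional law $\mcL(O_r(y)\mid\mcF_s)$, and show its conditional variance is bounded below by $c(r-s)^{1/2}$ uniformly in $y$ (for $r-s\le 1$), which is the standard ``one step of regularisation by noise'' input. (iii) Using that a bounded measurable $g$ satisfies $|\E[g(X)]-\E[g(Y)]|\le \|g\|_{L^\infty}\,d_{\mathrm{TV}}(\mcL(X),\mcL(Y))$, and the Gaussian TV bound $d_{\mathrm{TV}}(N(m_1,\sigma_1^2),N(m_2,\sigma_2^2))\lesssim |m_1-m_2|/\sigma + |\sigma_1^2-\sigma_2^2|/\sigma^2$, bound $\|A_{s,s'}\|_{L^p(\Omega)}$ by integrating $p_{t-r}(x,\cdot)$ in $y$ against $n^{-1}(r-s)^{-1/2}$, getting $\|A_{s,s'}\|_{L^p}\lesssim \|g\|_{L^\infty}\,n^{-1}(s'-s)^{1/2}(\cdot)$ — i.e.\ exponent $1/2+$ in $(s'-s)$, which beats the sewing threshold $1/2$. (iv) For the martingale-type bound on $\delta A_{s,u,s'}:=A_{s,s'}-A_{s,u}-A_{u,s'}$, estimate $\|\E^s[\delta A]\|_{L^p}$: the $\cF_u$-conditioning means the defect is now measured at a resolution $u-s$ away, producing an extra factor $(u-s)^{?}$; one checks the required exponent $>1$ in $(s'-s)$. (v) Apply stochastic sewing to conclude that the process $t\mapsto \int_0^t(\dots)$ equals $\lim \sum A_{s_i,s_{i+1}}$ and satisfies $\|\cdot\|_{L^p}\lesssim \|g\|_{L^\infty}n^{-1+\eps}$, uniformly in $x$; a final supremum over $x\in\T$ and $t\in[0,1]$ (both of which are harmless since the bound is uniform in these) gives \eqref{eq:names}.

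The main obstacle is step (iv), i.e.\ verifying the second stochastic sewing hypothesis with the correct power of $(s'-s)$ strictly above $1$: one must show that after conditioning on the coarser $\sigma$-algebra $\mcF_s$, the already-small defect $g(w_r)-g(w_{\kappa_n(r)}(\rho_n(\cdot)))$ gains a further factor, and this requires a \emph{two-scale} regularisation estimate — comparing the conditional Gaussian laws given $\mcF_s$ versus given $\mcF_u$ — rather than the single-scale bound of step (ii). This is exactly the delicate computation carried out in \cite[Lem.~3.3.1]{Mate}, and one has to be careful that the discretisation in \emph{time} (the $\kappa_n$ inside) does not destroy the adaptedness structure needed for $\E^s$ and $\E^u$ to interact correctly; in particular one should choose the sewing partition to refine the grid $\Lambda_n$ so that $\kappa_n(r)$ is constant on each subinterval. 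A secondary (but routine) technical point is the $r\to 0$ singularity: the variance lower bound $c(r-s)^{1/2}$ degenerates as $r\downarrow s$, but since it is integrable against $dr$ this only costs the $\eps$ in the exponent, and the short-time regime $r\le h$ where $\kappa_n(r)=0$ can be absorbed crudely using $\|g\|_{L^\infty}$ and the trivial bound $\int_0^h(\cdots)\,dr\lesssim h\lesssim n^{-2}$.
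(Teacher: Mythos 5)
The paper does \emph{not} prove Lemma~\ref{lem:integral} itself: it imports it verbatim from \cite[Lem.~3.3.1]{Mate}, with the brief remark that the cited result is stated with the discrete pair $(\cP^n,O^n)$ in the definition of $w$ and that the version needed here (with $\cP,O$) ``would only make the proof easier.'' Your proposal is therefore best read as a reconstruction of the argument of \cite{Mate}, and at the level of overall architecture — stochastic sewing with germ $A_{s,s'}=\E^s\int_s^{s'}\int_\T p_{t-r}(x,y)\big(g(w_r(y))-g(w_{\kappa_n(r)}(\rho_n(y)))\big)\,dy\,dr$, Gaussian total-variation bounds driven by the conditional nondegeneracy of $O$, and a near-diagonal splitting responsible for the $\eps$ loss — this is indeed the mechanism, so you have correctly identified the right tool.

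Two technical points, though, are off or misdirected. First, the step (i) reduction ``we may assume $\psi\equiv0$'' is not available for bounded measurable $g$: one cannot split $g(\cP_r\psi+O_r)$ additively, and your own step (iii) in fact contradicts this by carrying a mean-shift term $|m_1-m_2|/\sigma$. What really happens is that $\cP_r\psi$ contributes a deterministic shift of the conditional mean, and the role of the $C^{1/2}$ hypothesis is not to make this shift of size $n^{-1/2}$ (that would give rate $n^{-1/2}$ only) but to combine with the smoothing $\|\cP_r\psi\|_{C^2}\lesssim r^{-3/4}\|\psi\|_{C^{1/2}}$, so that away from $r=0$ the shift is of size $\lesssim n^{-1}r^{-1/4}+n^{-2}r^{-3/4}$; the $r=0$ singularity is integrable and the range $r\lesssim n^{-2}$ is disposed of by the crude $\|g\|_{L^\infty}$ bound. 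Second, your step (iv) worries about $\E^s[\delta A_{s,u,s'}]$ are misplaced: with the natural choice $A_{s,t}=\E^s[\int_s^t\cdots]$ one has $\E^s[\delta A_{s,u,t}]=\E^s[\int_u^t\cdots]-\E^s\E^u[\int_u^t\cdots]=0$ by the tower property, so the second sewing hypothesis is automatic. The entire difficulty sits in the \emph{first} hypothesis: after the TV estimate one only gets an exponent of $(s'-s)$ that is exactly (or barely above) $1/2$, and the required strict excess above $1/2$ is obtained by the near-diagonal/far-from-diagonal splitting you mention at the very end, at the cost of the $\eps$ in the exponent of $n$. Redirecting the emphasis from (iv) to (iii) would put your outline in line with \cite{Mate}.
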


\section{Main proofs}
\subsection{Bounds on the linear solutions in Besov norms}

We start with a very specific case of Kolmogorov's theorem that is sufficient for our setting.
\begin{prop}\label{prop:kolmogorov}
Let $\{X_k\}_{k\in \mathbb{Z}}$ be a family of $\C$-valued Gaussian random variables that are mutually complex orthogonal (i.e. $\E X_k\overline{X_\ell}=0$ for $k\neq \ell$). Denote $a_k=\sqrt{\E|X_k|^2}$.
Then the series
$$F:= \sum_{k\in\Z} X_k e_k $$ 
defines a random distribution, which for any $\alpha\in\R$ and $q\in[1,\infty)$ satisfies the bounds
\begin{equ}\label{besov estimate}
\E\| F\|_{B^\alpha_{\infty,\infty}(\T)}\leq\E\| F\|_{B^\alpha_{\infty,q}(\T)}\leq N \Big( \sum_{j\in\N} 2^j \big( \sum_{k\in \Z}(2^{\alpha j}\phi^j_1(k)a_k)^2\big)^{\frac{q}{2}}\Big)^{1/q},
\end{equ}
where $N=N(q)$.

Similarly,
the random function
$$F= \sum_{k=-n}^{n-1} X_k \delta e_k $$ 
satisfies for any $\alpha\in\R$ and $q\in[1,\infty)$ the bounds
\begin{equ}\label{besov estimate-n}
\E\| F\|_{B^\alpha_{\infty,\infty}(\Pi_n)}\leq\E\| F\|_{B^\alpha_{\infty,q}(\Pi_n)}\leq N \Big( \sum_{j\in\N} 2^j \big( \sum_{k=-n}^{n-1}(2^{\alpha j}\phi^j_{\rho_n}(k)a_k)^2\big)^{\frac{q}{2}}\Big)^{1/q},
\end{equ}
where $N=N(q)$.
\end{prop}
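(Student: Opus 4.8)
The plan is to reduce everything to a single estimate on each Littlewood--Paley block and then sum over the blocks. Write $\phi^j$ for the bump $\phi^j_1$ (respectively $\phi^j_{\rho_n}$) entering the definition of the continuous (respectively discrete) Littlewood--Paley blocks; since $\phi^j$ is compactly supported, $F^{[j]}=\sum_k\phi^j(k)X_k e_k$ is a \emph{finite} trigonometric polynomial, well defined regardless of convergence issues for $F$ itself. Set $\sigma_j^2:=\sum_k(\phi^j(k))^2a_k^2$, so that the right-hand side of \eqref{besov estimate} equals $N\big(\sum_j 2^j(2^{\alpha j}\sigma_j)^q\big)^{1/q}$. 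The key claim is that, uniformly in $j$ and in the family $\{X_k\}$,
\[
\E\|F^{[j]}\|_{L^\infty(\T)}^q\le N_q\,2^{j}\sigma_j^q ,
\]
and similarly $\E\|F^{[j],n}\|_{L^\infty(\Pi_n)}^q\le N_q\,2^j\sigma_j^q$. Granting this, monotone convergence gives $\E\|F\|_{B^\alpha_{\infty,q}(\T)}^q=\sum_j 2^{\alpha jq}\,\E\|F^{[j]}\|_{L^\infty(\T)}^q\le N_q\sum_j 2^j(2^{\alpha j}\sigma_j)^q$, and, since $q\ge1$, Jensen's inequality applied to the concave map $t\mapsto t^{1/q}$ turns this into the asserted bound for $\E\|F\|_{B^\alpha_{\infty,q}(\T)}$; rewriting $(2^{\alpha j}\sigma_j)^q=\big(\sum_k(2^{\alpha j}\phi^j(k)a_k)^2\big)^{q/2}$ recovers \eqref{besov estimate}. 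The first inequality in \eqref{besov estimate} is simply $\|\cdot\|_{\ell^\infty}\le\|\cdot\|_{\ell^q}$, and \eqref{besov estimate-n} follows verbatim.

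For the block estimate, note that $\phi^j$ is supported in a ball of radius $\lesssim 2^j$, so $F^{[j]}$ has Fourier support in $\{|k|\lesssim 2^j\}$; Bernstein's inequality on $\T$ (in the discrete case, Lemma~\ref{lem:bernstein}) then yields the deterministic bound $\|F^{[j]}\|_{L^\infty}\le N 2^{j/q}\|F^{[j]}\|_{L^q}$. Raising to the power $q$, taking expectations, and using Fubini,
\[
\E\|F^{[j]}\|_{L^\infty}^q\le N^q 2^{j}\,\E\|F^{[j]}\|_{L^q}^q=N^q 2^{j}\!\int\E|F^{[j]}(x)|^q\,dx .
\]
For fixed $x$, $F^{[j]}(x)=\sum_k\phi^j(k)X_k e_k(x)$ is a finite linear combination of the jointly Gaussian $X_k$, hence a complex Gaussian, and by mutual complex orthogonality together with $|e_k(x)|=1$ its second moment is $\E|F^{[j]}(x)|^2=\sum_k(\phi^j(k))^2a_k^2=\sigma_j^2$, independently of $x$. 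Bounding $|F^{[j]}(x)|\le|\mathrm{Re}\,F^{[j]}(x)|+|\mathrm{Im}\,F^{[j]}(x)|$ and noting that the real and imaginary parts are centered real Gaussians of variance at most $\sigma_j^2$, standard Gaussian moment bounds give $\E|F^{[j]}(x)|^q\le c_q\sigma_j^q$ uniformly in $x$; this proves the claim with $N_q=N^qc_q$.

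Finally, if the right-hand side of \eqref{besov estimate} is infinite for every $\alpha$ there is nothing to prove; otherwise, fixing an $\alpha$ for which it is finite, the block estimate applied to the partial sums $F_M=\sum_{|k|\le M}X_k e_k$ shows $(F_M)_M$ is Cauchy in $L^q(\Omega;B^\alpha_{\infty,q}(\T))$, hence converges to a random element $F\in B^\alpha_{\infty,q}(\T)\subset\cS'(\T)$; the bound \eqref{besov estimate} for $F$ then follows from the one for $F_M$ by Fatou's lemma. I expect the only genuinely delicate point to be the bookkeeping with the complex Gaussian structure: one is given only mutual complex orthogonality, which pins down $\E|F^{[j]}(x)|^2$ but not the pseudo-covariances $\E X_kX_\ell$, so the moment estimate must be insensitive to the latter --- passing through real and imaginary parts achieves exactly this. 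The discrete case introduces no new ideas beyond using the discrete Bernstein inequality of Lemma~\ref{lem:bernstein} and $\|\delta e_k\|_{L^\infty(\Pi_n)}=1$ in place of their continuous analogues.
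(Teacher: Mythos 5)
Your proof is correct and follows essentially the same route as the paper's: compute $\E|F^{[j]}(x)|^2$ from the complex orthogonality, upgrade to $q$-th moments by Gaussian moment equivalence, pass from $L^q$ to $L^\infty$ via Bernstein at the cost of a factor $2^{j/q}$, and sum over blocks. Your explicit passage through real and imaginary parts is a slightly more careful treatment of the complex-Gaussian moment bound (since only $\E X_k\overline{X_\ell}$, not $\E X_kX_\ell$, is controlled), and your remark about well-definedness of $F$ via Cauchy partial sums is a small addition the paper leaves implicit.
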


\begin{proof}
The proof of the two parts are identical, so we only give the first.
The first inequality in \eqref{besov estimate} is trivial. For any $j\in\N$, $F^{[j]}$ is a smooth function, and one can write
\begin{align*}
\E|F^{[j]}|^2(x)  \leq \sum_{k, k'\in\Z}\phi^j_1(k) \phi^j_1(k') \big(\E X_k\overline{X_{k'}}\big) e_k(x) \bar{e}_{k'}(x) = \sum_{k\in \Z}|\phi^j_1(k)a_k|^2.
\end{align*}
By the equivalence of moments of Gaussian random variables we then have
\begin{equ}\label{eq:33}
\E\|F^{[j]}\|_{L^q(\T)}^q= \int_\T \E|F^{[j]}(x)|^q\, dx \lesssim \int_\T \big(\E|F^{[j]}(x)|^2\big)^{\frac{q}{2}} \,dx = \big(\sum_{k\in \Z}(\phi^j_1(k)a_k)^2\big)^{\frac{q}{2}}.
\end{equ}
Therefore by Bernstein's inequality
\begin{align*}
\E\| F\|_{B^\alpha_{\infty,q}(\T)}^q = \E\sum_{j\in\N} (2^{\alpha j}\|F^{[j]}\|_{L^\infty(\T)})^q\lesssim \E\sum_{j\in\N} 2^j (2^{\alpha j}\|F^{[j]}\|_{L^q(\T)})^q,
\end{align*}
and by \eqref{eq:33} we get the second inequality in \eqref{besov estimate}.
\end{proof}
\begin{remark}
The proof only uses the equivalence of moments property of Gaussians, and so the statement immediately extends to any other family of probability distributions with the same property (e.g. random variables from a fixed Wiener chaos).
\end{remark}
We first show the bounds of desired order for the linear solutions. Similar in spirit bounds can be found in \cite[Lem.~3.4.]{MZ}, with a number of differences: therein the $2$-dimensional case is considered, without discretisation in time, and with Galerkin approximation in space (and of course due to the difference in dimension the regularities are shifted by $1/2$ compared to the ones below).
\begin{lemma}\label{lem:upperbound}
For any $n\in\N$, $t\in[0,1]$, and $q\in[1,\infty)$ one has the bounds
\begin{equ}\label{eq:mainObound-cont}
\mathbb{E}\|O_t-\iota O^n_t\|_{B^\alpha_{\infty,\infty}(\T)}^q\leq 
\begin{cases}
N n^{-q(1/2-\alpha-1/q)}&\quad \text{if }\alpha\in (-1/2,1/2),
\\
N n^{-q} & \quad\text{if }\alpha<-1/2,
\end{cases}
\end{equ}
\begin{equ}\label{eq:mainObound-disc}
\mathbb{E}\|\delta O_t-O^n_t\|_{B^\alpha_{\infty,\infty}(\Pi_n)}^q\leq 
\begin{cases}
N n^{-q(1/2-\alpha-1/q)}&\quad \text{if }\alpha\in (-1/2,1/2),
\\
N n^{-q} & \quad\text{if }\alpha<-1/2,
\end{cases}
\end{equ}
where $N=N(\alpha,q)$.
\end{lemma}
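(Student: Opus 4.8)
The plan is to reduce both bounds to second-moment estimates on the individual Fourier modes of the error and then feed these into Proposition~\ref{prop:kolmogorov}. Consider first the continuous bound \eqref{eq:mainObound-cont}. Write $O_t-\iota O^n_t=\sum_{\ell\in\Z}X_\ell e_\ell$, where $X_\ell=\scal{O_t,e_\ell}$ for $|\ell|>n$ and, for $\ell\in\{-n,\dots,n-1\}$, by \eqref{eq:O Fourier}--\eqref{eq:On Fourier},
\[
X_\ell=\int_0^t\!\!\int_\T\Big(e^{\lambda_\ell(t-s)}\bar e_\ell(y)-(1+h\lambda^n_\ell)^{\kappa_n(t-s)/h}\bar e_\ell(\rho_n(y))\Big)\,\xi(dy,ds).
\]
These Wiener integrals are mutually complex orthogonal, so Proposition~\ref{prop:kolmogorov} applies with $a_\ell^2=\E|X_\ell|^2$, and by the It\^o isometry $a_\ell^2$ is the space-time $L^2$-norm squared of the corresponding integrand. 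I would split that integrand into a ``time-stepping and eigenvalue'' piece $e^{\lambda_\ell(t-s)}-(1+h\lambda^n_\ell)^{\kappa_n(t-s)/h}$ and a ``spatial rounding'' piece $(1+h\lambda^n_\ell)^{\kappa_n(t-s)/h}\big(\bar e_\ell(y)-\bar e_\ell(\rho_n(y))\big)$, and bound each using $|\bar e_\ell(y)-\bar e_\ell(\rho_n(y))|\lesssim|\ell|/n$, the bounds \eqref{eq:gamma-prop1}--\eqref{eq:gamma-prop2} on $\gamma^n_\ell$, the subexponential decay \eqref{eq:exp-discrete}, $h\lesssim n^{-2}$, and elementary estimates such as $|e^{-x}-e^{-y}|\le|x-y|e^{-\min(x,y)}$ and $|e^{-x}-(1-x)|\le x^2/2$ for $x,y\ge0$ (with a routine splitting of the $s$-integral near $s=t$). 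This should give $a_\ell^2\lesssim n^{-2}$ uniformly over $t\in[0,1]$ and $\ell\in\{-n,\dots,n-1\}$; the anomalous mode $\ell=-n$, where $\delta e_{-n}=\delta e_n$ and $\gamma^n_{-n}=4\pi^{-2}$, is covered by the same computation. Combined with the trivial bounds $a_\ell^2\lesssim\E|\scal{O_t,e_\ell}|^2+\E|\scal{O^n_t,\delta e_\ell}_n|^2\lesssim(1+\ell^2)^{-1}$, this yields the key input $a_\ell^2\lesssim n^{-2}\wedge(1+\ell^2)^{-1}$ for every $\ell\in\Z$.

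With this in hand I would plug into the estimate $\E\|F\|^q_{B^\alpha_{\infty,q}}\lesssim\sum_{j}2^j\big(\sum_k(2^{\alpha j}\phi^j(k)a_k)^2\big)^{q/2}$ obtained inside the proof of Proposition~\ref{prop:kolmogorov} (recalling $\|F\|_{B^\alpha_{\infty,\infty}}\le\|F\|_{B^\alpha_{\infty,q}}$). Since the $j$-th block involves $\lesssim 2^j$ frequencies of size $\sim 2^j$, one has $\sum_k(\phi^j(k)a_k)^2\lesssim 2^j\big(n^{-2}\wedge 2^{-2j}\big)$, and splitting the sum over $j$ at the scale $2^j\sim n$ produces two geometric series, each dominated by its term at that scale: the low-frequency part because $1+q(\alpha+\tfrac12)>0$ (this is where $\alpha>-1/2$ enters), the high-frequency part provided $1-q(\tfrac12-\alpha)<0$, which holds once $q>1/(\tfrac12-\alpha)$. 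This gives $\E\|O_t-\iota O^n_t\|^q_{B^\alpha_{\infty,\infty}(\T)}\lesssim n^{1-q(1/2-\alpha)}=n^{-q(1/2-\alpha-1/q)}$ when $\alpha\in(-1/2,1/2)$ and $q$ is large, and, by the same bookkeeping with the low-frequency sum now dominant, $\lesssim n^{-q}$ when $\alpha<-1/2$ (again for $q$ large). The restriction to large $q$ is removed at the end by Jensen's inequality: $\E\|\cdot\|^q\le(\E\|\cdot\|^{q'})^{q/q'}$ for $q\le q'$, and since $q/q'\le1$ the resulting exponent is no worse than the claimed one.

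The discrete bound \eqref{eq:mainObound-disc} follows along identical lines, writing $\delta O_t-O^n_t=\sum_{k=-n}^{n-1}X^n_k\,\delta e_k$. Here aliasing gives $\scal{\delta O_t,\delta e_k}_n=\sum_{m\in\Z}\scal{O_t,e_{k+2nm}}$, hence $X^n_k=X_k+\sum_{m\neq0}\scal{O_t,e_{k+2nm}}$; since $|k+2nm|\ge n$ for $m\neq0$ and $|k|\le n-1$, the aliasing tail contributes $\sum_{m\neq0}\E|\scal{O_t,e_{k+2nm}}|^2\lesssim n^{-2}$, so $\E|X^n_k|^2\lesssim n^{-2}\wedge(1+k^2)^{-1}$ exactly as before. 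One should also check that the $X^n_k$ remain mutually complex orthogonal, which holds because distinct elements of $\{-n,\dots,n-1\}$ are never congruent $\bmod\ 2n$. Then the second part of Proposition~\ref{prop:kolmogorov} is applied as above, with the simplification that only the finitely many discrete blocks $j\le J_n+1$ occur, so the high-frequency tail is absent.

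The main obstacle is the mode-by-mode bound $a_\ell^2\lesssim n^{-2}$: one must simultaneously control the time discretisation, the eigenvalue perturbation $\lambda_\ell\mapsto\lambda^n_\ell$, and the spatial rounding $y\mapsto\rho_n(y)$, each of which is only of order $n^{-1}$ at the level of the heat kernel --- this is the same phenomenon underlying Lemma~\ref{lem:Pn-P}, and the cleanest route is to redo that computation at the level of a single Fourier mode rather than to quote the $L^2$-statement. A minor but genuine point of care is that Proposition~\ref{prop:kolmogorov} naturally produces a $B^\alpha_{\infty,q}$-norm whereas the statement asks for $B^\alpha_{\infty,\infty}$, and that one needs all $q\ge1$ rather than only large $q$; both are dispatched by the Jensen monotonicity noted above.
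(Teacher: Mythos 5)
Your proposal is correct and follows essentially the same route as the paper's proof: reduce to mode-wise second-moment bounds $a_\ell^2\lesssim n^{-2}\wedge(1+\ell^2)^{-1}$ via the It\^o isometry, feed these into Proposition~\ref{prop:kolmogorov}, split the Littlewood--Paley sum at $2^j\sim n$, and remove the large-$q$ restriction by Jensen, with the discrete case handled by the same aliasing identity $\scal{\delta O_t,\delta e_k}_n=\sum_{m\in\Z}\scal{O_t,e_{k+2nm}}$. The only cosmetic difference is that you organize the mode-wise integrand into two pieces (time-stepping/eigenvalue and spatial rounding) where the paper uses a four-term split $I^1+I^2+I^3+I^4$, but the underlying estimates and conclusion are identical.
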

\begin{proof}
By Jensen's inequality, it clearly suffices to bound for large enough $q$.
We start with the bound \eqref{eq:mainObound-cont}.
We wish to apply Proposition \ref{prop:kolmogorov} with $F=O- \iota O^n$.
To verify the complex orthogonality of the coordinates, take $k\neq \ell$ and first note that from \eqref{eq:O Fourier} (\eqref{eq:On Fourier}, resp.), It\^o's isometry, and the orthogonality of the $e_k$-s ($\delta e_k$-s, resp.) it is clear that for $\ell\neq k$ we have $\E\langle O_t, e_\ell\rangle\overline{\langle O_t, e_k\rangle}=0$, $\E\langle \iota O^n_t, e_\ell\rangle\overline{\langle \iota O^n_t, e_k\rangle}=0$, resp.
Furthermore, elementary calculation shows
\begin{equs}
\E\langle \iota O^n_t, e_\ell\overline{\rangle \langle O_t, e_k\rangle}&=\int_0^t\gamma_{s,t}\int_\T e_k(y)\overline{e}_\ell(\rho_n(y))\,dy\,ds
\\
&=\int_0^t\gamma_{s,t}\frac{e^{2\pi i\frac{k}{2n}}}{2\pi i k}\sum_{j=0}^{2n-1}e^{2\pi i j\frac{k}{2n}}e^{-2\pi i j\frac{\ell}{2n}}\,ds=0 
\end{equs}
with some bounded function $\gamma_{s,t}$.

To apply Proposition \ref{prop:kolmogorov}, we need to bound $a_\ell$. For $\ell\in\{-n,\ldots,n-1\}$ we have
\begin{equs}
\mathbb{E}&|\langle O_t-\iota O^n_t, e_\ell\rangle|^2 = \int_0^t\int_\T \big| e^{- 4 \pi^2 \ell^2 (t-s) }  \bar{e}_\ell(y)-(1+h\lambda_\ell^n)^{(\kappa_n (t-s)/h)} \overline{e_\ell^n(\rho_n(y))}  \big|^2\,dy\,ds\\
&\lesssim \int_0^t\int_\T \big| e^{- 4 \pi^2 \ell^2 (t-s) }  \bar{e}_\ell(y)-e^{- 4 \pi^2 \ell^2 (t-s) } \overline{e_\ell^n(\rho_n(y))}  \big|^2\,dy\,ds\\
&\quad+ \int_0^t\int_\T \big| e^{- 4 \pi^2 \ell^2 (t-s) } \overline{e_\ell^n(\rho_n(y))} -(1+h\lambda_\ell^n)^{(\kappa_n (t-s)/h)} \overline{e_\ell^n(\rho_n(y))}  \big|^2\,dy\,ds\\
&\leq \int_0^te^{- 8 \pi^2 \ell^2 (t-s) } \,ds \int_\T \big|  \bar{e}_\ell(y)-\overline{e_\ell^n(\rho_n(y))}  \big|^2\,dy\\
&\quad+ \int_0^t \big( e^{- 4 \pi^2 \ell^2 (t-s) } -(1+h\lambda_\ell^n)^{(\kappa_n (t-s)/h)}\big)^2 \,ds\int_\T  |\overline{e_\ell^n(\rho_n(y))} |^2\,dy.
\\
&\lesssim \int_0^te^{- 8 \pi^2 \ell^2 (t-s) } \,ds \int_\T \big|  \bar{e}_\ell(y)-\overline{e_\ell^n(\rho_n(y))}  \big|^2\,dy\\
&\quad+\int_0^t \big( e^{- 4 \pi^2 \ell^2 (t-s) } -e^{- \lambda^n_\ell (t-s)  } \big)^2\,ds\\
&\quad+  \int_0^t \big( e^{- \lambda^n_\ell (t-s) } -e^{- \lambda^n_\ell \kappa_n(t-s) } \big)^2 \,ds\\
&\quad+  \int_0^t   \big(e^{- \lambda^n_\ell \kappa_n(t-s) } -(1+h\lambda_\ell^n)^{(\kappa_n (t-s)/h)}\big)^2 \,ds
\\
&=:I^1+I^2+I^3+I^4.
\end{equs}
We claim that each $I^j$ satisfies
\begin{equ}\label{eq:I bounds}
I^j\lesssim n^{-2}.
\end{equ}
First one has trivially
$$ \int_\T \big|  \bar{e}_\ell(y)-\overline{e_\ell^n(\rho_n(y))}  \big|^2\lesssim \ell^2/n^2,$$
and thus by the elementary inequality $\int_0^t e^{-\alpha s} ds \leq 1/\alpha$, we get \eqref{eq:I bounds} for $I^1$.
Next,
we have
$$
I_2\lesssim \int_0^t e^{-32 \ell^2 (t-s)} \ell^4 (\gamma_\ell^n-1)^2 (t-s)^2\,ds\lesssim   \int_0^t e^{-32 \ell^2 (t-s)}\ell^8 n^{-4} (t-s)^2 \,ds$$
where we used 
$\lambda_\ell\leq\lambda_\ell^n\leq -16 \ell^2$ in the first inequality (see \eqref{eq:gamma-prop1}) and \eqref{eq:gamma-prop2}
in the second.
Now by the inequality $\int_0^t e^{-\alpha s} s^2 ds \leq 2/\alpha^3$ we conclude
$I^2\lesssim \ell^2n^{-2}$, and since $|\ell|\leq n$, this yields \eqref{eq:I bounds} for $I^2$ as claimed.
Next, we have
$$
I_3\leq
\int_0^t \big( |t-s-\kappa_n(t-s)|\lambda_\ell^n {e^{- \lambda^n_\ell \kappa_n(t-s) }}\big)^2\,ds\lesssim \int n^{-4}\ell^4 e^{-16 \ell^2 (t-s)} \,ds\lesssim \ell^2 n^{-4},$$
as desired.
Finally, arguing as in \cite{Mate}, we find
\begin{align*}
I_4&\lesssim 
\int_0^t   e^{-16\ell^2(t-s)} |\kappa_n(t-s) h^{-1}|^2 |h\lambda_\ell^n|^4\,ds 
\\
&\lesssim \int_0^t   e^{-16\ell^2(t-s)}  (t-s)^2 n^{-4} \ell^{8} \,ds
\lesssim \ell^2 n^{-4}.
\end{align*}
yielding again \eqref{eq:I bounds} as desired. Therefore the proof of \eqref{eq:I bounds} is complete.

Moving on to $\ell\in\Z\setminus\{-n,\ldots,n-1\}$, it is easy to see that
\begin{equ}
\mathbb{E}|\langle O_t-\iota O^n_t, e_\ell \rangle|^2=\mathbb{E}|\langle O_t, e_\ell \rangle|^2=\int_0^te^{-8\pi^2\ell^2(t-s)}\,ds\lesssim \ell^{-2}.
\end{equ}
Altogether we get
\begin{equ}
|a_\ell |^2=\mathbb{E}|\langle O_t-\iota O^n_t, e_\ell\rangle|^2\lesssim n^{-2}\wedge\ell^{-2}.
\end{equ}
By Proposition \ref{prop:kolmogorov} we therefore have 
\begin{align*}
\mathbb{E}\|O_t-\iota O^n_t\|_{B^\alpha_{\infty,\infty}(\T)}^q &\lesssim  \sum_{j\in\N} 2^j \Big( \sum_{k\in \Z}\big(2^{\alpha j}\phi^j_1(k)(n^{-1}\wedge k^{-1})\big)^2\Big)^{\frac{q}{2}}.
\end{align*}
Summing first over $j$ such that $2^j\leq n$, we get a bound of order
\begin{equ}
n^{-q}\sum_{2^j\leq n}2^{j(1+q/2+\alpha q)}
\end{equ}
If $\alpha>-1/2$, then this is of order $n^{1-q(1/2-\alpha)}$, while for $\alpha<-1/2$, for large enough $q$, it is of order $n^{-q}$.
Both of these bounds are of the required order.
On the complement regime $2^j>n$, we get a bound of order
\begin{equ}
\sum_{2^j>n}2^{j(1-q/2+\alpha q)}.
\end{equ}
For any $\alpha<1/2$, for large enough $q$, this is of order $n^{1-q(1/2-\alpha)}$. This is again of the required order.

The proof of \eqref{eq:mainObound-disc} is very similar.
The orthogonality of the coordinates follows the same way.
To estimate 
$\mathbb{E}|\langle \delta O_t- O^n_t, \delta e_\ell\rangle_n|^2 $, note that for all $\ell\in \{-n,\ldots,n-1\}$ and all $j\in \mathbb{Z}$ one has
$\delta e_{\ell+ j2n}= \delta e_\ell$, and thus 
$$
\langle \delta O_t, \delta e_\ell \rangle_n = \sum_{j\in\Z} \langle O_t, e_{\ell+ j2n}\rangle= \langle O_t, e_{\ell}\rangle +\sum_{j\neq 0} \langle O_t, e_{\ell+ j2n}\rangle\ .$$
Therefore $\langle \delta O_t- O^n_t, \delta e_\ell\rangle_n= \langle O_t- \iota O^n_t, e_{\ell}\rangle +\sum_{j\neq 0} \langle O_t, e_{\ell+ j2n}\rangle$, which implies
\begin{align*}
|a_\ell|^2= \E |\langle \delta O_t-  O^n_t,\delta e_\ell\rangle_n |^2 &\lesssim  \E|\langle O_t- \iota O^n_t, e_{\ell}\rangle|^2 + \E| \sum_{j\neq 0} \langle O_t, e_{\ell+ j2n}\rangle|^2\\
&\lesssim  \frac{1}{n^2} + \sum_{{j\neq 0}} \frac{1}{ (\ell+ j2n)^2}\\
&\lesssim  \frac{1}{n^2} \ .
\end{align*}
This is precisely the same bound as before, so by applying the second part of Proposition \ref{prop:kolmogorov} we get \eqref{eq:mainObound-disc} as before.
\end{proof}

Complementing the upper bounds in Lemma\ref{lem:upperbound} is the lower bound in Proposition~\ref{prop:lowerbound} that we prove below.

\begin{proof}[Proof of Proposition \ref{prop:lowerbound}]
Note that in the setting of the statement, we have $u=O$, $u^n=O^n$.
As seen in the last proof, $\langle \delta O_t- O^n_t, \delta e_1\rangle_n= \langle O_t- \iota O^n_t, e_{1}\rangle +\sum_{j\neq 0} \langle O_t, e_{1+ j2n}\rangle$, and since the terms are Gaussian and independent, it suffices to bound the variance of $\langle O_t- \iota O^n_t, e_{1}\rangle$ from below by a positive constant times $n^{-2}$.
Recall that 
\begin{equ}
\mathbb{E}|\langle O_t-\iota O^n_t, e_1\rangle|^2 = \int_0^t\int_\T \big| e^{- 4 \pi^2  (t-s) }  \bar{e}_1(y)-(1+h\lambda_1^n)^{(\kappa_n (t-s)/h)} \overline{e_1^n(\rho_n(y))}  \big|^2\,dy\,ds.
\end{equ}
Recall that for any two unit vectors $z_1,z_2\in\C$ and any two constants $c_1,c_2>0$, one has $|c_1 z_1-c_2z_2|\geq (c_1\wedge c_2)|z_1-z_2|$. Therefore 
\begin{equ}
\mathbb{E}|\langle O_t-\iota O^n_t, e_1\rangle|^2\gtrsim \int_\T \big|   \bar{e}_1(y)- \overline{e_1^n(\rho_n(y))}  \big|^2\,dy\gtrsim n^{-2},
\end{equ}
as claimed.
\end{proof}

\subsection{A priori bounds on the approximations}\label{sec:apriori}
The purpose of this section is to derive an a priori bound on the approximation $u^n$.
Let $v^n=u^n-O^n$,
which solves the recursion
\begin{align}
v^n_{{t}+h}( {x}) &=v^n_{{t}}( {x})+h \Delta_n v^n_{ t}( x)
+h F\big(O^n_t(x)+v^n_{ t}({x})\big), \label{eq:approx v classical form}
\end{align}
$(t,x)\in\Lambda_n\times\Pi_n$,
with initial data $v^n_0(x)=\psi^n(x)$, $x\in\Pi_n$.
Alternatively, in mild form we have
\begin{equ}
v^n_{ t}( x)
=\mathcal{P}^{n}_{ t} \psi( x)+\int_0^{ t} \mathcal{P}^{n}_{\kappa_n( t-s)} F \big(O^n_{\kappa_n(s)}+v^n_{\kappa_n(s)}\big)( x)\,ds.
 \label{eq:approx v mild form}
\end{equ}
Throughout the section we fix an even integer $\mu>\nu$, and aim to bound the $L^\mu(\Pi_n)$-norm of $v^n$.
Introduce the quantities
\begin{equ}
R^n=1+\max_{t\in\Lambda_n\cap[0,1]}\|O^n_t\|_{L^\infty(\Pi_n)},\qquad A^n_t=\max_{s\in\Lambda_n\cap[0,t]}\|v^n_s\|_{L^{\mu}(\Pi_n)}.
\end{equ}
\begin{theorem}\label{thm:apriori}
Fix a $K$ such that $\|\delta\psi\|_{L^\mu(\Pi_n)}\leq K$ for all $n\in\N$.
Then there exists a constant $N=N(c,c_0,\ldots,c_\nu,\nu,\mu,K)$ such that
on the event
\begin{equ}
\Omega_n=\{(R^n)^{\nu(\nu+\mu)}\leq n^{1-(\nu-1)/\mu}\}
\end{equ}
one has the bound
\begin{equ}\label{eq:apriori-main-mu}
A^n_1\leq N(R^n)^{(\nu+\mu-1)/\mu}.
\end{equ}
\end{theorem}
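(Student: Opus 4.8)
The plan is to derive the bound \eqref{eq:apriori-main-mu} by a discrete Gr\"onwall-type argument applied directly to the evolution of $\|v^n_t\|_{L^\mu(\Pi_n)}^\mu$, exploiting the sign condition $c_\nu<0$ on the leading coefficient of $F$ to absorb the dangerous superlinear term. First I would test the recursion \eqref{eq:approx v classical form} against $\mu (v^n_t)^{\mu-1}$ (recall $\mu$ is even so this is well defined and $(v^n)^\mu\geq 0$), i.e. compute $\|v^n_{t+h}\|_{L^\mu(\Pi_n)}^\mu$ in terms of $\|v^n_t\|_{L^\mu(\Pi_n)}^\mu$. Expanding $v^n_{t+h}=v^n_t+h\Delta_n v^n_t+hF(O^n_t+v^n_t)$ to first order gives the increment
\begin{equ}
\|v^n_{t+h}\|_{L^\mu}^\mu-\|v^n_t\|_{L^\mu}^\mu = \mu h\,\langle (v^n_t)^{\mu-1},\Delta_n v^n_t\rangle_n + \mu h\,\langle (v^n_t)^{\mu-1},F(O^n_t+v^n_t)\rangle_n + (\text{h.o.t.}),
\end{equ}
where the higher order terms come from the binomial expansion and carry extra factors of $h$. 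The Laplacian term is $\leq 0$ by discrete integration by parts and convexity of $x\mapsto x^\mu$ (this is the standard discrete maximum-principle-type estimate), so it can be dropped. The point of the higher order terms is that they are genuinely lower order: each comes with at least $h^2$ and can be controlled using the CFL relation $h=c(2n)^{-2}$ together with the Bernstein-type inequality \eqref{eq:reverse-discrete-Lp} to trade the loss of $L^\infty$ control for powers of $n$ that are swallowed by $h$; this is where the $n$-dependent threshold defining $\Omega_n$ enters.

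The heart of the matter is the nonlinear term $\mu h\,\langle (v^n_t)^{\mu-1},F(O^n_t+v^n_t)\rangle_n$. I would write $F(O^n_t+v^n_t)=c_\nu (O^n_t+v^n_t)^\nu + (\text{lower degree terms})$ and split $(O^n_t+v^n_t)^\nu$ pointwise: the dominant part $c_\nu (v^n_t)^\nu$ contributes $\mu c_\nu h\,\|v^n_t\|_{L^{\mu+\nu-1}(\Pi_n)}^{\mu+\nu-1}$, which is \emph{negative} and large, and all the remaining contributions are of the form (bounded power of $R^n$) times (power of $v^n_t$ strictly below $\mu+\nu-1$). For those I would use Young's inequality in the form $|v|^a R^b \leq \epsilon |v|^{\mu+\nu-1} + C_\epsilon R^{b'}$ with a small $\epsilon$ so that a fraction of the good negative term absorbs all of them, leaving something like
\begin{equ}
\|v^n_{t+h}\|_{L^\mu}^\mu-\|v^n_t\|_{L^\mu}^\mu \leq -\tfrac12\mu|c_\nu| h\,\|v^n_t\|_{L^{\mu+\nu-1}(\Pi_n)}^{\mu+\nu-1} + N h\,(R^n)^{P} + (\text{h.o.t.}),
\end{equ}
for an explicit exponent $P$. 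By Jensen/H\"older on $\Pi_n$, $\|v^n_t\|_{L^{\mu+\nu-1}}^{\mu+\nu-1}\geq \|v^n_t\|_{L^\mu}^{\mu+\nu-1}\geq (\|v^n_t\|_{L^\mu}^\mu)^{(\mu+\nu-1)/\mu}$, so writing $a^n_t=\|v^n_t\|_{L^\mu(\Pi_n)}^\mu$ this is a discrete differential inequality $a^n_{t+h}-a^n_t\leq h\big(-c_1 (a^n_t)^{1+(\nu-1)/\mu} + c_2 (R^n)^{P}\big)+(\text{h.o.t.})$. A dissipative ODE of the type $\dot a = -c_1 a^{1+\gamma}+c_2 M$ with $\gamma=(\nu-1)/\mu>0$ has all solutions attracted into $\{a\leq (c_2 M/c_1)^{1/(1+\gamma)}\}$ at a rate independent of the initial value; the same holds for its explicit Euler discretisation provided the step is small enough relative to the current value of $a$, which again is exactly guaranteed on $\Omega_n$. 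Converting back, $A^n_1 = (\sup_t a^n_t)^{1/\mu}\lesssim ((R^n)^P)^{1/(\mu+\nu-1)}\cdot(1/\mu)$-th power, and a bookkeeping of $P$ gives the claimed exponent $(\nu+\mu-1)/\mu$ on $R^n$ (one should expect $P=\nu(\nu+\mu-1)/\mu\cdot\mu/\ldots$ — in any case the exponents are designed so this matches).

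The main obstacle I anticipate is the careful treatment of the higher-order (in $h$) terms from the binomial expansion of $\|v^n_{t+h}\|_{L^\mu}^\mu$: these involve high powers of $\Delta_n v^n_t$ and of $F(O^n_t+v^n_t)$, hence very high powers of $v^n_t$ (up to degree $\nu\mu$) and of $n$ (through $\Delta_n$, which costs $n^2=O(h^{-1})$ in $L^\infty$). One must show that after using the CFL relation and the discrete Bernstein inequality to convert $L^\infty$ norms into $L^\mu$ norms at the price of powers of $n$, all these terms are either (i) absorbed into the negative leading term via Young's inequality, or (ii) bounded by $h\,(R^n)^{P}$ on the event $\Omega_n$ — and it is precisely the requirement that (ii) holds that dictates the definition of $\Omega_n$ as $\{(R^n)^{\nu(\nu+\mu)}\leq n^{2-2(\nu-1)/\mu}\}$. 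Getting the exponents to line up consistently — i.e. verifying that the threshold is weak enough to be useful (it will later be shown to hold with high probability since $R^n$ grows only logarithmically) yet strong enough to close the estimate — is the delicate bookkeeping at the core of the proof. A secondary technical point is justifying that the discrete Laplacian term is nonpositive, i.e. the discrete analogue of $\int (u)^{\mu-1}\Delta u = -(\mu-1)\int u^{\mu-2}|\nabla u|^2\leq 0$; this follows from summation by parts on $\Pi_n$ and the elementary convexity inequality $(b-a)(b^{\mu-1}-a^{\mu-1})\geq 0$, but should be stated cleanly as it is used repeatedly.
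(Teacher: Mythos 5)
The starting point of your proposal is the same as the paper's: expand $\|v^n_{t+h}\|_{L^\mu(\Pi_n)}^\mu-\|v^n_t\|_{L^\mu(\Pi_n)}^\mu$ via the binomial theorem, kill the Laplacian contribution in the linear-in-increment term by discrete integration by parts, and use $c_\nu<0$ on the nonlinear contribution. Those pieces are fine. The gap is in your treatment of the higher-order ($\ell\geq 2$) binomial terms, and it is not a bookkeeping issue but a structural one: \emph{the CFL relation does not make $h\Delta_n$ small}. With $h=c(2n)^{-2}$ and $\|\Delta_n f\|_{L^\mu(\Pi_n)}\leq 4(2n)^2\|f\|_{L^\mu(\Pi_n)}$, the operator $h\Delta_n$ has $L^\mu\to L^\mu$ norm $O(1)$ (indeed this is exactly the stability threshold). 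Consequently a term like $\binom{\mu}{2}\big\langle(h\Delta_n v^n_{jh})^2,(v^n_{jh})^{\mu-2}\big\rangle_n$ is of size $O(\|v^n_{jh}\|_{L^\mu}^\mu)$ \emph{per time step}, with no residual power of $h$; summed over the $\sim h^{-1}$ steps this diverges like $h^{-1}(A^n_t)^\mu$. Bernstein's inequality \eqref{eq:reverse-discrete-Lp} only makes this worse (it costs powers of $n$), and the negative dissipation $-|c_\nu|\mu h\|v^n_{jh}\|_{L^{\mu+\nu-1}}^{\mu+\nu-1}$ carries an extra $h$ and cannot absorb an $O(1)$-per-step quantity by Young's inequality unless $\|v^n_{jh}\|$ is already of size $\gtrsim h^{-1/(\nu-1)}$, which is exactly what you are trying to rule out.

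The missing ingredient is a genuine gain of spatial regularity for $v^n_t$. This is what the paper's Lemma~\ref{lem:apriori-intermediate} supplies: the crucial one-step increment estimate
\begin{equation*}
\|v^n_{t+h}-v^n_t\|_{L^\mu(\Pi_n)}\lesssim h^{1-(\nu-1)/(2\mu)-\eps}\big((R^n)^\nu+(A^n_t)^\nu\big),
\end{equation*}
whose exponent on $h$ is strictly greater than $1/2$ (since $\mu>\nu$), so that squaring and summing over $\sim h^{-1}$ steps gives a convergent $h^{1-(\nu-1)/\mu-2\eps}$ prefactor. The proof of that lemma does \emph{not} bound $\|\cP^n_h v^n_t - v^n_t\|_{L^\mu}$ by $h\|\Delta_n\|\,\|v^n_t\|_{L^\mu}$; instead it first uses the mild form and the discrete Schauder estimate (Lemma~\ref{lem:discrete-Schauder}) to place $v^n_t$ in $B^{2}_{\mu/\nu,\infty}(\Pi_n)\subset B^{2-(\nu-1)/\mu}_{\mu,\infty}(\Pi_n)$, and only then applies the smoothing estimate $\|\cP^n_h f-f\|_{L^\mu}\lesssim h^{\delta/2}\|f\|_{B^{\delta}_{\mu,\infty}}$ (Lemma~\ref{lem:discrete-semigroup2}). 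The whole chain hinges on the fact that $v^n_t$ is not an arbitrary grid function but a solution of a discrete heat equation with controlled forcing, hence two Besov derivatives better than $L^{\mu/\nu}$. Without this regularity step your argument does not close, and the event $\Omega_n$ alone cannot rescue it. (As a minor remark, your idea of keeping the dissipative term $-|c_\nu|h\|v^n_t\|_{L^{\mu+\nu-1}}^{\mu+\nu-1}$ explicit and comparing against the ODE $\dot a=-c_1a^{1+\gamma}+c_2M$ is a legitimate variant for the linear-in-increment piece, but the paper just bounds $\langle F(O^n+v^n),(v^n)^{\mu-1}\rangle_n\lesssim(R^n)^{\nu+\mu-1}$ directly, which is simpler and suffices once the higher-order terms are under control.)
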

\begin{remark}\label{rem:trunc}
To get a more direct analogy to the a priori bound of \cite{Jentzen, Wang}, note that Theorem \ref{thm:apriori} can easily be turned into an $L^p(\Omega)$ bound on a truncated scheme.
Take $N$ from Theorem \ref{thm:apriori} and define
\begin{equ}
\tau_n:=\inf\{t\in\Lambda_n: \|u^n_t\|_{L^\mu(\Pi_n)}\geq (N+1) n\}
\end{equ}
and let the truncated scheme be $\tilde u^n_t=u^n_{t\wedge\tau_n}$.
Then, on $\Omega_n$ the bound \eqref{eq:apriori-main-mu} implies that $A_1^n\leq N n$, and therefore $\tau_n\geq 1$.
Notice furthermore that as a consequence of Gaussianity, $\mathbb{P}(\Omega_n^c)$ decays faster than any power of $n$.
Therefore
\begin{equs}
\big\|&\max_{t\in\Lambda_n\cap[0,1]}\|\tilde u^n_t\|_{L^\mu(\Pi_n)}\big\|_{L^p(\Omega)}
\\
&\leq\big\|\mathbf{1}_{\Omega_n}\max_{t\in\Lambda_n\cap[0,1]}\|u^n_t\|_{L^\mu(\Pi_n)}\big\|_{L^p(\Omega)}
+\big\|\mathbf{1}_{\Omega_n^c}\max_{t\in\Lambda_n\cap[0,1]}\|\tilde u^n_t\|_{L^\mu(\Pi_n)}\big\|_{L^p(\Omega)}
\\
&\lesssim \big\|(R^n)^{(\nu+\mu-1)/\mu}\|_{L^p(\Omega)}
+n\mathbb{P}(\Omega_n^c)\lesssim 1.
\end{equs}
\end{remark}

Before the proof of Theorem \ref{thm:apriori} we start by outlining the argument, to motivate the intermediate Lemma \ref{lem:apriori-intermediate} below.
Let $t\in\Lambda_n\cap[0,1]$ and write $k=t/h$.
Then one has
\begin{equ}
\|v^n_t\|_{L^{\mu}(\Pi_n)}^{\mu}=\|\delta\psi\|_{L^{\mu}(\Pi_n)}^{\mu}+\sum_{j=0}^{k-1}\|v_{(j+1)h}^n\|_{L^{\mu}(\Pi_n)}^{\mu}-\|v_{jh}^n\|_{L^{\mu}(\Pi_n)}^{\mu}. 
\end{equ}
One can then rewrite the summands by the binomial theorem
\begin{equs}
\|v^n_t\|_{L^{\mu}(\Pi_n)}^{\mu}&=\|\delta\psi\|_{L^{\mu}(\Pi_n)}^{\mu}+\sum_{j=0}^{k-1}\mu\bigscal{v^n_{(j+1)h}-v^n_{jh},(v^n_{jh})^{\mu-1}}_n
\\
&\qquad\qquad+\sum_{j=0}^{k-1}\sum_{\ell=2}^{\mu}\binom{\mu}{\ell}\bigscal{(v^n_{(j+1)h}-v^n_{jh})^\ell,(v^n_{jh})^{\mu-\ell}}_n. \label{eq:apriori-sum}
\end{equs}
In the first sum we will leverage the monotonicity of the nonlinearity. In the second sum we use the fact that the time increments appear with power at least $2$, therefore a bound on this increment with a power of $h$ larger than $1/2$ makes the total sum `small'. This is the content of Lemma \ref{lem:apriori-intermediate}, which does use the sign of the nonlinearity.

\begin{lemma}\label{lem:apriori-intermediate}
For any $\eps>0$ there exists a constant $N=N(\eps,c,c_0,\ldots,c_\nu,\nu,\mu,K)$ such that for all $t\in\Lambda_n\cap[0,1]$ the following bound holds:
\begin{equ}\label{eq:crude-apriori-main}
\|v^n_{t+h}-v^n_t\|_{L^\mu(\Pi_n)}\leq N h^{1-(\nu-1)/(2\mu)-\eps}\big((R^n)^\nu+(A^n_{t})^\nu\big).
\end{equ}
\end{lemma}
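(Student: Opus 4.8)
The starting point is the mild formulation \eqref{eq:approx v mild form}, from which one gets
\begin{equ}
v^n_{t+h}-v^n_t=(\cP^n_h-\id)v^n_t+h\,\cP^n_h F\big(O^n_t+v^n_t\big),
\end{equ}
after rewriting the integral term using the semigroup property and the fact that on one time step the integrand is constant (so $\int_t^{t+h}\cP^n_{\kappa_n(t+h-s)}(\cdots)\,ds=h\cP^n_0(\cdots)=h(\cdots)$ evaluated at time $t$, up to the shift baked into $\kappa_n$; one should be slightly careful here and track exactly which time-discretised kernel appears, but in any case the factor $h$ is explicit and $\cP^n$ is a contraction on every $L^p(\Pi_n)$). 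The second term is therefore trivially bounded in $L^\mu(\Pi_n)$: since $F$ is a polynomial of degree $\nu$ with coefficients depending only on $c_0,\dots,c_\nu$, one has pointwise $|F(O^n_t+v^n_t)|\lesssim 1+|O^n_t|^\nu+|v^n_t|^\nu$, hence $\|h\cP^n_hF(O^n_t+v^n_t)\|_{L^\mu(\Pi_n)}\lesssim h\big(1+\|O^n_t\|_{L^{\nu\mu}(\Pi_n)}^\nu+\|v^n_t\|_{L^{\nu\mu}(\Pi_n)}^\nu\big)$. To convert the $L^{\nu\mu}$ norms back to the quantities $R^n$ and $A^n_t$ appearing in the statement, I would bound $\|O^n_t\|_{L^{\nu\mu}}\leq\|O^n_t\|_{L^\infty}\leq R^n$ directly, and for $v^n_t$ invoke the reverse discrete inequality \eqref{eq:reverse-discrete-Lp}: $\|v^n_t\|_{L^{\nu\mu}(\Pi_n)}\leq(2n)^{1/\mu-1/(\nu\mu)}\|v^n_t\|_{L^\mu(\Pi_n)}\lesssim n^{(\nu-1)/(\nu\mu)}A^n_t$, so this term contributes $\lesssim h\,n^{(\nu-1)/\mu}\big((R^n)^\nu+(A^n_t)^\nu\big)$ — but since $h\sim n^{-2}$, $h\,n^{(\nu-1)/\mu}=h^{1-(\nu-1)/(2\mu)}\cdot h^{(\nu-1)/(2\mu)}n^{(\nu-1)/\mu}=h^{1-(\nu-1)/(2\mu)}\cdot c^{(\nu-1)/(2\mu)}$, which is of the desired form $h^{1-(\nu-1)/(2\mu)}$ (even without the $\eps$).

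\textbf{The main term.} The term that genuinely forces the loss of $(\nu-1)/(2\mu)+\eps$ in the exponent is $(\cP^n_h-\id)v^n_t$. Here I would use the discrete semigroup smoothing estimate \eqref{lem:discrete-semigroup2}: for $\delta\in[0,2]$,
\begin{equ}
\|(\cP^n_h-\id)v^n_t\|_{B^0_{\mu,\infty}(\Pi_n)}\lesssim h^{\delta/2}\|v^n_t\|_{B^\delta_{\mu,\infty}(\Pi_n)}.
\end{equ}
By Proposition~\ref{prop:equivalence}, $\|v^n_{t+h}-v^n_t\|_{L^\mu(\Pi_n)}$ is controlled (up to an $\eps$-loss in the Besov scale, via \eqref{second}) by $\|v^n_{t+h}-v^n_t\|_{B^{0}_{\mu,\infty}(\Pi_n)}$; and one estimates $\|v^n_t\|_{B^\delta_{\mu,\infty}(\Pi_n)}\leq(2n)^{\delta}\|v^n_t\|_{B^0_{\mu,\infty}(\Pi_n)}\lesssim n^\delta\|v^n_t\|_{L^\mu(\Pi_n)}\leq n^\delta A^n_t$ using \eqref{eq:reverse-discrete-regularity} and again \eqref{first}. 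So this term is $\lesssim h^{\delta/2}n^\delta A^n_t=(h^{1/2}n)^\delta A^n_t\lesssim A^n_t$ since $h^{1/2}n\sim\sqrt c/2$ is bounded — that just reproduces the trivial bound. The trick, as the lemma's exponent hints, is \emph{not} to use the crude $B^\delta\hookleftarrow L^\mu$ regularity trade for all of $v^n_t$, but to also spend integrability: one can combine \eqref{eq:reverse-discrete-regularity} and \eqref{eq:reverse-discrete-Lp} (or directly Lemma~\ref{lem:discrete-Besov-embedding}) to get, for any $\beta\ge0$ and $p\le\mu$,
\begin{equ}
\|v^n_t\|_{B^{\delta}_{\mu,\infty}(\Pi_n)}\lesssim n^{\,\delta+1/p-1/\mu}\|v^n_t\|_{L^p(\Pi_n)}.
\end{equ}
The point is that $v^n_t$ is controlled in $L^\mu$ but one wants to feed a \emph{lower} integrability norm into the smoothing estimate to gain; alternatively — and I think this is the cleaner route — one keeps $p=\mu$ but notes $v^n_t=\cP^n_t\psi^n+(\text{Duhamel part})$, and the nontrivial regularity gain comes from the Duhamel term via Lemma~\ref{lem:discrete-Schauder}, which lands in $B^{\alpha+2}$ if the nonlinearity sits in $B^\alpha$. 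Concretely: $F(O^n+v^n)$ is bounded in $L^\mu$ (pointwise by $\lesssim(R^n)^\nu+n^{(\nu-1)/\mu}(A^n)^\nu$ after the same $L^{\nu\mu}\to L^\mu$ trade), hence in $B^0_{\mu,\infty}$, hence by Lemma~\ref{lem:discrete-Schauder} the Duhamel part of $v^n_t$ sits in $B^{2}_{\mu,\infty}(\Pi_n)$ with the same bound, so $\|v^n_t\|_{B^{\delta}_{\mu,\infty}(\Pi_n)}$ for $\delta\le2$ is controlled — combined with $\|\cP^n_t\psi^n\|_{B^\delta_{\mu,\infty}}\lesssim t^{-\delta/2}\|\psi^n\|_{L^\mu}\lesssim t^{-\delta/2}K$ from Lemma~\ref{lem:discrete-semigroup1} — by $\lesssim t^{-\delta/2}K+(R^n)^\nu+n^{(\nu-1)/\mu}(A^n_t)^\nu$, \emph{without} an extra power of $n^\delta$. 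Then \eqref{lem:discrete-semigroup2} with a well-chosen $\delta<2$ gives $\|(\cP^n_h-\id)v^n_t\|_{L^\mu}\lesssim h^{\delta/2-\eps}\big(t^{-\delta/2}K+(R^n)^\nu+n^{(\nu-1)/\mu}(A^n_t)^\nu\big)$, and once more $h^{\delta/2}n^{(\nu-1)/\mu}=h^{\delta/2-(\nu-1)/(2\mu)}$ up to the constant $c^{(\nu-1)/(2\mu)}$; optimising by taking $\delta$ close to $2$ yields the claimed exponent $1-(\nu-1)/(2\mu)-\eps$ (the $t^{-\delta/2}$ singularity being harmless since $t\ge h$ means $t^{-\delta/2}\lesssim h^{-\delta/2}$, absorbed into the same power count, and $t=0$ treated separately by $v^n_0=\psi^n$ bounded).

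\textbf{Main obstacle and bookkeeping.} The chief difficulty is not any single estimate but getting the \emph{exponent count exact}: every application of a reverse discrete inequality spends a power of $n\sim h^{-1/2}$, and one must verify that precisely $(\nu-1)/(2\mu)$ worth of $h$ is lost — from the single $L^{\nu\mu}\to L^\mu$ trade applied to the degree-$\nu$ term $v^\nu$ — and that the $\eps$ is the (unavoidable) price of passing $B^\delta_{\mu,\infty}\to L^\mu$ through Proposition~\ref{prop:equivalence} and of taking $\delta$ strictly below $2$. One must also be careful that the right-hand side features $(A^n_t)^\nu$ and not $(A^n_t)^\nu$ multiplied by a bad power of $n$: this is why the $L^{\nu\mu}\to L^\mu$ trade must be performed on the \emph{smallest possible} (degree-$\nu$) monomial and its $h$-prefactor must be kept explicitly paired with it at every stage, rather than estimating $F$ crudely first and tracking the power of $n$ afterwards. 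A final technical point is the precise form of the one-step mild identity and the role of $\kappa_n,\rho_n$; since within a single time step $\kappa_n$ changes nothing and the discrete kernel is a contraction, this is routine but should be stated cleanly. The sign/monotonicity of $F$ is \emph{not} needed in this lemma (it enters only in the subsequent use of \eqref{eq:apriori-sum}); here $F$ is used purely through its polynomial growth of degree $\nu$.
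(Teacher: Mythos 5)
Your proposal follows essentially the same route as the paper's proof: write out the one--step increment $v^n_{t+h}-v^n_t=(\cP^n_h-\mathrm{id})v^n_t+hF(u^n_t)$, handle the $hF$ term by a single reverse discrete $L^p$ trade (costing exactly $n^{(\nu-1)/\mu}\sim h^{-(\nu-1)/(2\mu)}$), and handle $(\cP^n_h-\mathrm{id})v^n_t$ by first upgrading $v^n_t$ to two degrees of regularity via the discrete Schauder estimate (Lemma~\ref{lem:discrete-Schauder}) and then applying the semigroup--difference estimate (Lemma~\ref{lem:discrete-semigroup2}), with the $\eps$ coming from Proposition~\ref{prop:equivalence}. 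The only real difference is bookkeeping: you place the $n^{(\nu-1)/\mu}$ loss on the nonlinearity at the $L^\mu$ level and absorb it at the very end via $h\sim n^{-2}$, whereas the paper keeps $F$ in $L^{\mu/\nu}$, runs Schauder in $B^2_{\mu/\nu,\infty}$, and spends the same power through the discrete Besov embedding (Lemma~\ref{lem:discrete-Besov-embedding}); both give the same exponent.

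One point worth flagging: you are right to worry about the free--evolution part $\cP^n_t\psi^n$, which Lemma~\ref{lem:discrete-Schauder} does not cover (it assumes zero initial data), and which the paper's own proof passes over silently. However, the fix you sketch does not quite close it: pairing $\|\cP^n_t\psi^n\|_{B^\delta_{\mu,\infty}}\lesssim t^{-\delta/2}K$ with the $h^{\delta/2}$ gain yields only an $O(1)$ (indeed $O(h^{-\eps})$) bound for $t$ near $h$, not the required $O\bigl(h^{1-(\nu-1)/(2\mu)-\eps}\bigr)$, and at $t=0$ the term $h\Delta_n\psi^n$ is already of size $O(K)$ if $\psi$ is only controlled in $L^\mu$. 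So this piece genuinely requires more regularity on $\psi$ than the $L^\mu$ bound recorded in the constant $K$ (for instance the $C^{1-\eps}$ assumed in Theorem~\ref{thm:main}), or some other argument --- a gap that your proof shares with the paper's rather than introduces.
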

\begin{proof}
Recall that $
v^n_{t+h}-v^n_t=\cP^n_hv^n_t+hF(u^n_t)-v^n_t,$
and applying \eqref{eq:reverse-discrete-Lp} we immediately get
\begin{equs}
h\|F(u^n_t)\|_{L^\mu(\Pi_n)}&\lesssim h n^{(\nu-1)/\mu} \|F(O^n_t+v^n_t)\|_{L^{\mu/\nu}(\Pi_n)}
\\&
\lesssim h n^{(\nu-1)/\mu}\big((R^n)^\nu+(A^n_t)^\nu\big).\label{eq:crude-aprio1}
\end{equs}
It remains to bound $\cP^n_h v^n_t-v^n_t$. By Lemma \ref{lem:discrete-Schauder} and Proposition \ref{prop:equivalence} we have
\begin{equs}
\|v_t\|_{B^{2}_{\mu/\nu,\infty}(\Pi_n)}&\lesssim \sup_{s\in\Lambda_n\cap[0,t)}\|F(O^n_s+v^n_s)\|_{B^{0}_{\mu/\nu,\infty}(\Pi_n)}
\\&\lesssim\sup_{s\in\Lambda_n\cap[0,t)}\|F(O^n_s+v^n_s)\|_{L^{\mu/\nu}(\Pi_n)}
\\&\lesssim (R^n)^\nu+(A^n_{t})^\nu.
\end{equs}
Therefore by Lemma \ref{lem:discrete-Besov-embedding} we have
\begin{equ}
\|v_t\|_{B^{2-(\nu-1)/\mu}_{\mu,\infty}(\Pi_n)}\lesssim \|v_t\|_{B^{2}_{\mu/\nu,\infty}(\Pi_n)}\lesssim (R^n)^\nu+(A^n_{t})^\nu.
\end{equ}
Finally, using Proposition \ref{prop:equivalence} and Lemma \ref{lem:discrete-semigroup2}, we have
\begin{equs}
\|\cP^n_h v^n_t-v^n_t\|_{L^\mu(\Pi_n)}&\lesssim\|\cP^n_h v^n_t-v^n_t\|_{B^\eps_{\mu,\infty}(\Pi_n)}
\\
&\lesssim h^{1-(\nu-1)/(2\mu)-\eps/2}\|v_t\|_{B^{2-(\nu-1)/\mu}_{\mu,\infty}(\Pi_n)}
\\
&\lesssim h^{1-(\nu-1)/(2\mu)-\eps/2}\big((R^n)^\nu+(A^n_{t})^\nu\big).\label{eq:crude-aprio2}
\end{equs}
The bounds \eqref{eq:crude-aprio1} and \eqref{eq:crude-aprio2} yield \eqref{eq:crude-apriori-main} with $\eps/2$ in place of $\eps$.
\end{proof}


\begin{proof}[Proof of Theorem \ref{thm:apriori}]
We start from the decomposition \eqref{eq:apriori-sum} and bound the terms one by one. Recall that $v^n_{(j+1)h}-v^n_{jh}=\Delta_n v^n_{jh}+h F(O^n_{jh}+v^n_{jh})$.
At this point the sign of the nonlinearity comes into play: fixing an arbitrary parameter $a\in\R$, the leading order term in the polynomial $y\mapsto F(a+y)y^{\mu-1}$ is $c_\nu  y^{\nu+\mu-1}$.
It follows that there exists a constant $N_0\lesssim 1$ such that  $F(a+y)y^{\mu-1}$ is negative for $|y|\geq N_0 |a|$.
On the other hand, if $|y|\leq N_0|a|$, then we can bound $F(a+y)y^{\mu-1}$ trivially by a constant times $1+|a|^{\nu+\mu-1}$.
Therefore we get
\begin{equ}
\bigscal{F(O^n_{jh}+v^n_{jh}),(v_{jh}^n)^{\mu-1}}_n\lesssim 1+\|O^n_{jh}\|^{\nu+\mu-1}_{L^{\nu+\mu-1}(\Pi_n)}\leq (R^n)^{\nu+\mu-1}.\label{eq:proof-apriori1}
\end{equ}
As for the contribution of the term with the Laplacian, we can argue similarly to \cite[Lem.~3.1]{GG}: first notice that with $\delta_n^\pm:C(\Pi_n)\to C(\Pi_n)$ defined by $\delta_n^{\pm}f(x)=\pm(2n)^{-1}\big(f(x)-f(x\pm (2n)^{-1})\big)$, we have $\Delta_n=\delta_n^+\delta_n^-$. Further, $\delta_n^+$ is the adjoint of $-\delta_n^-$ with respect to the inner product $\scal{\cdot,\cdot}_n$.
Finally, for any $f\in C(\Pi_n)$, and any $p\geq 2$ one has
\begin{equs}
\delta_n^-(|f|^{p-2}f)(x)&=\delta_n^-f(x)\int_0^1 (p-1)\big| (1-\theta)f(x)+\theta f(x-(2n)^{-1})\big|^{p-2}\,d\theta
\\&=: \delta_n^-f(x) F_{p,n}f(x),
\end{equs}
with the mapping $F_{p,n}:C(\Pi_n)\to C(\Pi_n)$ defined by the above equality clearly satisfying $F_{p,n}f\geq 0$ for any $f$.
Putting these observations together,
\begin{equ}
\bigscal{\Delta_n v^n_{jh},(v^n_{jh})^{\mu-1}}_n=-\bigscal{\delta_n^- v^n_{jh},\delta_n^-\big((v^n_{jh})^{\mu-1}\big)}_n=-\bigscal{\delta_n^- v^n_{jh},(F_{\mu,n}v^n_{jh})\delta_n^- v^n_{jh}}_n\leq 0.\label{eq:proof-apriori2}
\end{equ}
Moving on to the terms of the last type in \eqref{eq:apriori-sum}, by H\"older's inequality and Lemma \ref{lem:apriori-intermediate} (with $\eps>0$ to be chosen momentarily) we can write for any $j=0,\ldots,k-1$ and $\ell=2,\ldots \mu-1$,
\begin{equs}
\big|\bigscal{(v^n_{(j+1)h}-v^n_{jh})^\ell,(v^n_{jh})^{\mu-\ell}}_n\big|
&\leq \|v^n_{(j+1)h}-v^n_{jh}\|_{L^\mu(\Pi_n)}^\ell\|v^n_{jh}\|_{L^\mu(\Pi_n)}^{\mu-\ell}
\\&\lesssim h^{\ell(1-(\nu-1)/(2\mu)-\eps)}\big((R^n)^\nu+(A^n_{jh})^\nu\big)^\ell(A^n_{jh})^{\mu-\ell}
\\
&\lesssim h^{2(1-(\nu-1)/(2\mu)-\eps)}\big((R^n)^{\nu\mu}+(A^n_{jh})^{\nu\mu}\big)\label{eq:proof-apriori3}.
\end{equs}
Substituting \eqref{eq:proof-apriori1}-\eqref{eq:proof-apriori2}-\eqref{eq:proof-apriori3} into \eqref{eq:apriori-sum}, summing up, recalling the assumption on $\|\delta\psi\|_{L^\mu(\Pi_n)}$, and keeping in mind that $k\leq h^{-1}$, we get
\begin{equ}
\|v^n_t\|^\mu_{L^\mu(\Pi_n)}\lesssim (R^n)^{\nu+\mu-1}+h^{1-(\nu-1)/\mu-2\eps}\big((R^n)^{\nu\mu}+(A^n_{t-h})^{\nu\mu}\big).
\end{equ}
Denote $\kappa=1-(\nu-1)/\mu>0$ and fix $\eps=\kappa/2$. Note that the definition of $\Omega_n$ ensures that $h^{\kappa/2}(R^n)^{\nu\mu}\leq 1$, so on the event $\Omega_n$ the above bound further simplifies to
\begin{equ}\label{eq:apriori-almostdone}
\|v^n_t\|^\mu_{L^\mu(\Pi_n)}\lesssim (R^n)^{\nu+\mu-1}+h^{\kappa/2}(A^n_{t-h})^{\nu\mu}.
\end{equ}
We now claim that this implies the claimed bound \eqref{eq:apriori-main-mu} if we choose $N$ large enough.
Clearly it suffices to prove for $n$ large enough.
We proceed by induction on $k=t/h$.
For $k=0$ we simply need to choose $N\geq K$. In the inductive step, if $A_t^n=A_{t-h}^n$, then we are clearly done, otherwise $A^n_t=\|v^n_t\|_{L^\mu(\Pi_n)}$. Denoting the implicit constant in \eqref{eq:apriori-almostdone} by $N_1$, we then have
\begin{equ}
(A_t^n)^\mu\leq N_1 (R^n)^{\nu+\mu-1}+N_1 N h^\kappa (R^n)^{\nu(\nu+\mu-1)}. 
\end{equ}
To handle the first term, we simply choose $N\geq 2N_1$. For the second, note that on $\Omega_n$ we have $h^\kappa (R^n)^{\nu(\nu+\mu-1)}\to 0$ as $n\to 0$. As a consequence, for large enough $n$ the second term can be bounded by $N/2$. This finishes the proof.
\end{proof}

\subsection{Proof of Theorem \ref{thm:main}}
In addition to our usual notation $\lesssim$, in this proof we use the notation $a\preceq b$ to denote the existence of an almost surely finite random variable $\eta$ depending only on the parameters of the problem (that is, $\psi$, $\theta$, $\eps$, and $F$) such that $a\leq \eta b$ (either on the whole $\Omega$ or on a specified subset of it). 

We write $v^n=u^n-O^n$ as before, as well as $v=u-O$.
By Lemma \ref{lem:upperbound} and Proposition \ref{prop:equivalence} it is sufficient to show
\begin{equ}
\sup_{t\in\Lambda_n\cap[0,1]} \|\delta v_{\kappa_n(t)}-v^n_{\kappa_n(t)}\|_{L^\infty(\Pi_n)}\preceq n^{-1/2+\theta+\eps}.
\end{equ}
For $(t,x)\in\Lambda_n\times\Pi_n$ we can write
\begin{equs}
\delta v_t(x)-v^n_t(x)&=\cP_t \psi(x)-\cP^n_t\psi(x)+\int_0^t\cP_{t-s}F(u_s)(x)-\cP^n_{\kappa_n(t-s)}F(u^n_{\kappa_n(s)})(x)\,ds
\\
&=\cP_t \psi(x)-\cP^n_t\psi(x)
\\
&\qquad+\int_0^t\int_{\T} p_{t-s}(x,y)\Big(F\big(u_s(y)\big)-F\big(u_{\kappa_n(s)}(\rho_n(y))\big)\Big)\,dy\,ds
\\
&\qquad+\int_0^t\int_{\T} \Big(p_{t-s}(x,y)-p^n_{\kappa_n(t-s)}(x,\rho_n(y))\Big)F\big(u_{\kappa_n(s)}(\rho_n(y))\big)\,dy\,ds
\\
&\qquad+\int_0^t\cP^n_{\kappa_n(t-s)}\Big(F(u_{\kappa_n(s)})-F(u^n_{\kappa_n(s)})\Big)(x)\,ds
\\
&=:\cE^{n,0}_t(x)+\cE^{n,1}_t(x)+\cE^{n,2}_t(x)+\cE^{n,3}_t(x).\label{eq:CE-decomp}
\end{equs}
We can first write
\begin{equ}
|\cE^{n,0}_t(x)| 
\lesssim n^{-1+\eps},\label{eq:bound-cE0}
\end{equ}
using Lemma \ref{lem:det-rate} (with $\alpha=1-\eps$) and $\psi\in C^{1-\eps}(\T)$.

Next, we have
\begin{equs}
|\cE^{n,2}_t(x)|&\leq\int_0^t\big\|p_{t-s}(x,\cdot)-p^n_{\kappa_n(t-s)}(x,\rho_n(\cdot))\big\|_{L^1(\T)}\big\|F(u_{\kappa_n(s)})\big\|_{L^\infty(\Pi_n)}\,ds
\\
&\lesssim \big(1+ \|u\|_{L^\infty([0,1]\times\T)}^\nu\big)\int_0^t\big\|p_{t-s}(x,\cdot)-p^n_{\kappa_n(t-s)}(x,\rho_n(\cdot))\big\|_{L^1(\T)}\,ds
\\
&\lesssim \big(1+ \|u\|_{L^\infty([0,1]\times\T)}^\nu\big) \Big(\int_0^{t-h}(t-s)^{-3/4}n^{-1}\,ds+\int_{t-h}^t 1\,ds\Big)
\\
&\preceq n^{-1},
\label{eq:bound-cE2}
\end{equs}
using Lemma \ref{eq:Pn-P} (with $\beta=2$) to get the third line, along with the fact that $\|p_{t-s}(x,\cdot)\|_{L^1(\T)}=\|p^n_{\kappa_n(t-s)}(x,\rho_n(\cdot))\|_{L^1(\T)}=\|p^n_{\kappa_n(t-s)}(x,\cdot)\|_{L^1(\Pi_n)}=1$.

To estimate $\cE^{n,1}$, introduce a parameter $L\in\N$ and the notations
$\tau_L=\inf\{t\geq 0:\,\|u_t\|_{L^\infty(\T)}\geq L\}\wedge 1$,
$\tilde u^{(L)}_s=u_{s\wedge\tau_L}$,
$\tilde \Omega_L=\{\tau_L=1\}\subset\{\|u\|_{L^\infty([0,1]\times\T)}\leq L\}$,
$F_L(r)=F\big(-L\vee(r\wedge L)\big)$.
Finally, let us introduce, for any $z:\Omega\times[0,1]\times\T\to\R$,
\begin{equ}
\tilde\cE^{n,L}_t[z](x)=\int_0^t\int_{\T} p_{t-s}(x,y)\Big(F_L\big(z_s(y)\big)-F_L\big(z_{\kappa_n(s)}(\rho_n(y))\big)\Big)\,dy\,ds.
\end{equ}
Then by definition, on $\tilde\Omega_L$, one has $\cE^{n,1}=\tilde\cE^{n,L}[\tilde u^{(L)}]$.
Moreover, the union of $\tilde\Omega_L$-s over all $L\in\N$, is of full probability.
Therefore, in order to show
\begin{equ}\label{eq:bound-cE1}
|\cE^{n,1}_t(x)|\preceq n^{-1+2\eps},
\end{equ}
it suffices to show for any $L\in\N$
\begin{equ}\label{eq:bound-cEL}
\big|\tilde \cE^{n,L}_t[\tilde u^{(L)}](x)\big|\preceq_L n^{-1+2\eps}.
\end{equ}
By Lemma \ref{lem:integral}, we have that
\begin{equ}
\big\|\tilde \cE^{n,L}_t[w](x)\big\|_{L^p(\Omega)}\lesssim_{p,L} n^{-1+\eps},
\end{equ}
where $w$ is defined in \eqref{eq:w}. Therefore,
\begin{equs}
\E\sup_{n\in\N}n^{1-2\eps}\sup_{(t,x)\in(\Lambda_n\cap[0,1])\times\Pi_n}\big|\tilde \cE^{n,L}_t[w](x)\big|^p
&\leq\E\sum_{n\in\N}n^{p(1-2\eps)}\sum_{(t,x)\in(\Lambda_n\cap[0,1])\times\Pi_n}\big|\tilde \cE^{n,L}_t[w](x)\big|^p
\\
&\lesssim_{p,L}\sum_{n\in\N}n^{-p\eps}n^3.\label{eq:44}
\end{equs}
Choosing $p$ large enough, the last sum converges, and we get \eqref{eq:bound-cEL} with $w$ in place of $u^{(L)}$.
Since the laws of $w$ and $\tilde u^{(L)}$ are equivalent by Girsanov's theorem,  \eqref{eq:bound-cEL}, and consequently \eqref{eq:bound-cE1}, follows.

Turning to $\cE^{n,3}$, define the polynomial $\tilde F$ in two variables $a,b$ by
\begin{equ}
\tilde F(a,b)=\sum_{j=0}^{\nu-1}c_{j+1}\sum_{k=0}^j a^{j-k}b^k,
\end{equ}
which satisfies $F(a)-F(b)=(a-b)\tilde F(a,b)$.
We then write
\begin{equs}
\cE^{n,3}_t&=\int_0^t\cP^n_{\kappa_n(t-s)}\Big(\big(O_{\kappa_n(s)}-O^n_{\kappa_n(s)}\big)\tilde
F(u_{\kappa_n(s)},u^n_{\kappa_n(s)})\Big)\,ds
\\
&\qquad+
\int_0^t\cP^n_{\kappa_n(t-s)}\Big(\big(v_{\kappa_n(s)}-v^n_{\kappa_n(s)}\big)\tilde
F(u_{\kappa_n(s)},u^n_{\kappa_n(s)})\Big)\,ds
\\
&=:\cE^{n,3,1}_t+\cE^{n,3,2}_t.
\end{equs}
It's well-known that
$\sup_{s\in[0,1]}\|u_s\|_{C^{1/2-\eps}(\T)}\preceq 1$.
Therefore by Lemma \ref{lem:besov-holder}
\begin{equ}
\sup_{s\in[0,1]}\|\delta u_s\|_{B^{1/2-\eps}_{\infty,\infty}(\Pi_n)}\lesssim
\sup_{s\in[0,1]}\|\delta u_s\|_{C^{1/2-\eps}(\Pi_n)}\leq 
\sup_{s\in[0,1]}\|u_s\|_{C^{1/2-\eps}(\T)}\preceq 1.
\end{equ}
Similarly, one gets $\sup_{s\in[0,1]}\|\delta O_s\|_{B^{1/2-\eps}_{\infty,\infty}(\Pi_n)}\preceq 1$.
By Lemma \ref{lem:upperbound} and an analogous argument to \eqref{eq:44} we have
\begin{equ}\label{ddd}
\sup_{s\in[0,1]} \big\|\delta O_{\kappa_n(s)}-O^n_{\kappa_n(s)}\big\|_{B^{\bar\theta}_{\infty,\infty}(\Pi_n)}\preceq n^{-1/2+\bar\theta+\eps}
\end{equ}
for any $\bar \theta\in(-1/2,1/2)$.
Choosing $\bar \theta>0$ we get
\begin{equ}
\sup_{n\in\N} R^n\leq 1+\sup_{n\in\N}\max_{t\in\Lambda_n\cap[0,1]} \|O^n_t\|_{B^{\bar \theta}_{\infty,\infty}(\Pi_n)}\preceq 1.
\end{equ} 
For any $\mu$ as in Section \ref{sec:apriori},
on $\Omega_n$, Theorem \ref{thm:apriori} yields a bound of order $1$ in the sense of $\preceq$ for $v^n$ in $L^\mu(\Pi_n)\subset B^0_{\mu,\infty}(\Pi_n)$. Substituting this back in the equation, by Lemma \ref{lem:discrete-Schauder} one also gets a bound of order $1$ in $B^{2}_{\mu/\nu,\infty}(\Pi_n)\subset B^{1/2-\eps}_{\infty,\infty}(\Pi_n)$.
We conclude that on $\Omega_n$ we have
\begin{equ}
\sup_{n\in\N}\sup_{s\in[0,1]}\big\|\tilde F(\delta u_{\kappa_n(s)},u^n_{\kappa_n(s)})\big\|_{B^{1/2-\eps}_{\infty,\infty}(\Pi_n)}\preceq 1.\label{eq:66}
\end{equ}
Now we use \eqref{ddd} with $\bar\theta=\theta$.
Since $1/2+\theta-\eps>0$, Lemma \ref{lem:product} can be used to bound the integrand in $\cE^{n,3,1}$, which combined with Lemma \ref{lem:discrete-Schauder} yields that on $\Omega_n$
\begin{equ}\label{eq:bound-cE31}
\sup_{t\in\Lambda_n\cap[0,1]}\|\cE^{n,3,1}_t\|_{B^{2+\theta}_{\infty,\infty}(\Pi_n)}\preceq  n^{-1/2+\theta+\eps}.
\end{equ}
Finally, we can use \eqref{eq:66} in a trivial way to get
for any $T\in\Lambda_n$, on $\Omega_n$,
\begin{equ}\label{eq:bound-cE32}
\sup_{t\in\Lambda_n\cap[0,T]}\|\cE^{n,3,2}_t\|_{L^\infty(\Pi_n)}\lesssim \int_0^T \|\delta v_{\kappa_n(s)}-v^n_{\kappa_n(s)}\|_{L^\infty(\Pi_n)}\,ds.
\end{equ}
Putting together \eqref{eq:CE-decomp}-\eqref{eq:bound-cE0}-\eqref{eq:bound-cE2}-\eqref{eq:bound-cE1}-\eqref{eq:bound-cE31}-\eqref{eq:bound-cE32} we get for any 
$T\in\Lambda_n$, on $\Omega_n$,
\begin{equ}
\sup_{t\in\Lambda_n\cap[0,T]} \|\delta v_t-v^n_t\|_{L^\infty(\Pi_n)}\preceq n^{-1/2+\theta+\eps}+\int_0^T \sup_{t\in\Lambda_n\cap[0,s]}\|\delta v_t-v^n_t\|_{L^\infty(\Pi_n)}\,ds.
\end{equ}
By Gronwall's lemma we get on $\Omega_n$
\begin{equ}
\sup_{t\in\Lambda_n\cap[0,1]} \|\delta v_t-v^n_t\|_{L^\infty(\Pi_n)}\preceq n^{-1/2+\theta+\eps},
\end{equ}
and since the sum of the probabilities of the complements of $\Omega_n$ is finite, this yields the claim.\qed
\newpage

\section{Implementation}
The implementation of the finite difference scheme is simple. The results are illustrated below.
\begin{center}
\begin{figure}[hbt!]
\centering
  \includegraphics[width=9cm]{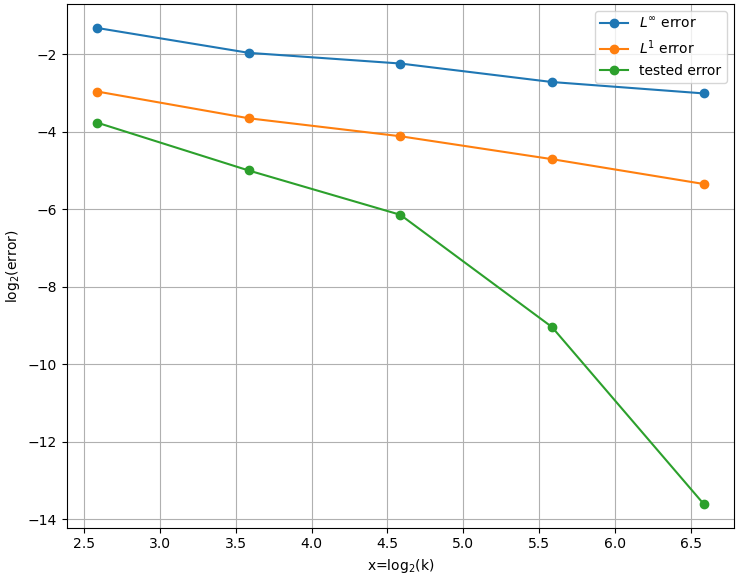}
  \caption{A log-log plot of the error (compared to the highest resolution approximation) for a single realisation of the approximations, measured in $L_\infty$, $L_1$, and tested against a single test function, showing superior rate for the latter.}  
  \label{figureC}
\end{figure}
\end{center}
\begin{figure}[hbt!]
  \includegraphics[width=\linewidth]{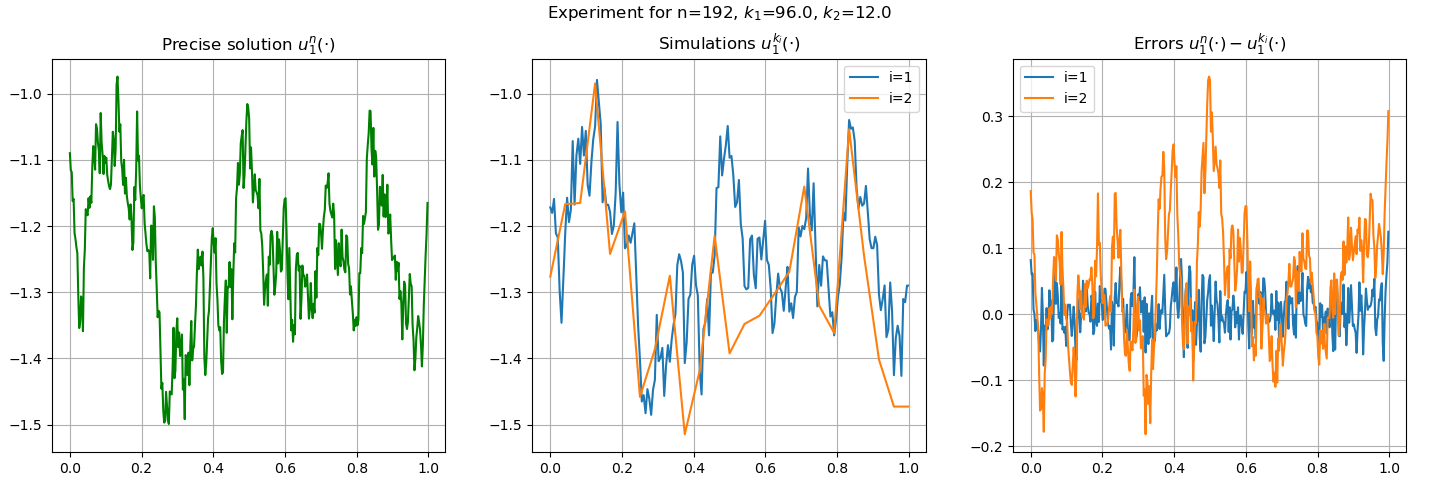}
\caption{A fine grid approximation at time $1$ (left), two coarser grid approximations (middle) and their differences (right). The error plots show oscillatory behaviour around $0$, which suggests smaller error in negative regularity spaces, agreeing with Theorem \ref{thm:main}.}  
  \label{figureA}
\end{figure}
\begin{figure}[hbt!]
  \includegraphics[width=\linewidth]{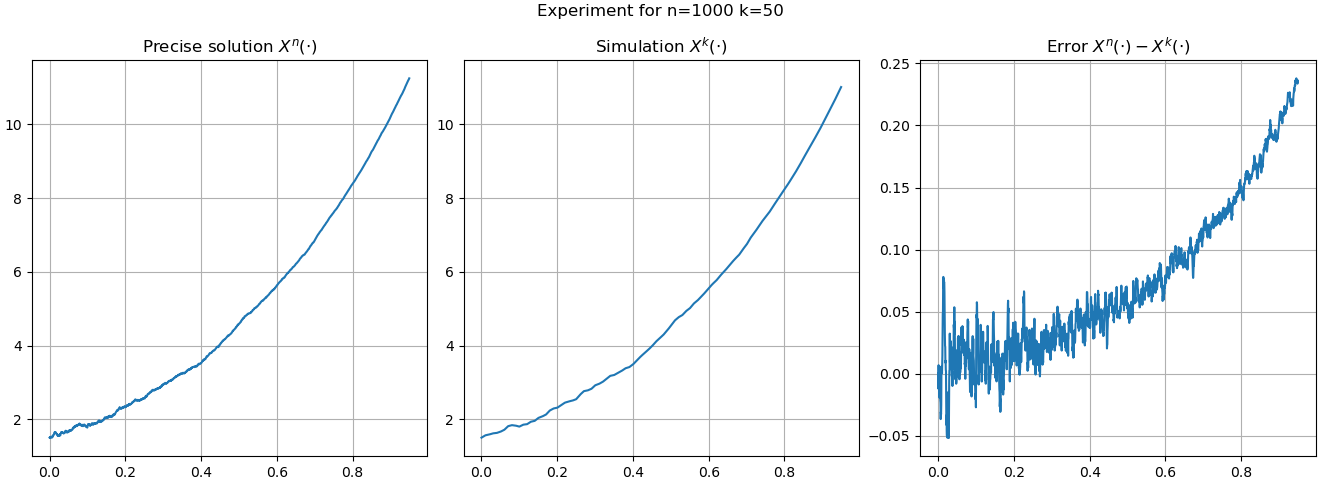}
  \caption{A fine grid approximation (left), a coarse grid approximation (middle) and their difference (right) for the Euler scheme applied to the SDE $dX_t=2X_t\,dt+\big(\arctan(X_t)-\pi/2\big)\,dW_t$.
Here the error is one-sided and thus not expected to be smaller in distributional norm, as discussed in Remark \ref{rem:SDE}.}  
  \label{figureB}
\end{figure}

\newpage
{\appendix
\section{Some technical proofs}
This appendix contains some proofs postponed from Section \ref{sec:spaces}.
\begin{proof}[Proof of Lemma~\ref{lemma:testing against test functions}]
Set $J_\lambda\in \{0,...,J_n+1\}$ such that $\lambda\in (2^{-J_\lambda-1},2^{-J_\lambda}]$, we find
\begin{align*}
\big|\scal{f,\delta\varphi_x^\lambda}_n\big|
&\leq\sum_{k=0}^{J_n+1} \big|\scal{f^{[k],n},\delta\varphi_x^\lambda}_n\big| \\
&\leq \sum_{j=0}^{J_n+1} \sum_{|k-j|\leq 1} \big|\scal{f^{[k],n},(\delta\varphi_x^\lambda)^{[j],n}}_n\big|
\\
&\leq \sum_{j=0}^{J_n+1} \sum_{|k-j|\leq 1} \big\|f^{[k],n}\big\|_{L^\infty(\Pi_n)}\big\|(\delta\varphi_x^\lambda)^{[j],n}\big\|_{L^1(\Pi_n)}\\
&\lesssim  \|f\|_{B^{\alpha}_{\infty,\infty}(\Pi_n)} \sum_{j=0}^{J_n+1} 2^{-\alpha j}\big\|(\delta\varphi_x^\lambda)^{[j],n}\big\|_{L^1(\Pi_n)} \\
&\leq\|f\|_{B^{\alpha}_{\infty,\infty}(\Pi_n)} \Big( \sum_{j=0}^{J_\lambda} 2^{-\alpha j}\big\|(\delta\varphi_x^\lambda)^{[j],n}\big\|_{L^1(\Pi_n)}+ \sum_{j=J_\lambda+1}^{J_n+1} 2^{-\alpha j}\big\|(\delta\varphi_x^\lambda)^{[j],n}\big\|_{L^1(\Pi_n)} \Big)\ .
\end{align*}
For the first sum, using \eqref{aaaa} and \eqref{eq:disc-to-cont-fourier}, we find 
$$\sum_{j=0}^{J_\lambda} 2^{-\alpha j} \big\|(\delta\varphi_x^\lambda)^{[j],n}\big\|_{L^1(\Pi_n)} \lesssim \|\varphi^\lambda_x\|_{L^1(\mathbb{R})} \sum_{j=0}^{J_\lambda}  2^{-\alpha j} \lesssim\|\varphi\|_{L^1(\T)}  \lambda^\alpha \ ,$$
where we used that $\|\varphi\|_{L^1(\T)}=  \|\varphi^\lambda_x\|_{L^1(\mathbb{R})}$.
For the second sum, we claim that for $2^{-j}\leq \lambda$ and any $l\in \mathbb{N}$
\begin{equation}\label{asdf}
\|(\delta\varphi_x^\lambda)^{[j],n}\|_{L^1(\Pi_n)}\lesssim_{n} \|\varphi\|_{C^{n}(\T)} (2^{-j} \lambda^{-1})^l,
\end{equation}
from which it follows that since $r>-\alpha$,
$$
\sum_{j=J_\lambda+1}^{J_n+1} 2^{-\alpha j}\|(\delta\varphi_x^\lambda)^{[j],n}\|_{L^1(\Pi_n)}\lesssim_{r}\|\varphi\|_{C^{r}(\T)} \sum_{j=J_\lambda+1}^{J_n+1}
 (2^{-j} \lambda^{-1})^r   2^{-\alpha j} \lesssim\|\varphi\|_{C^{r}(\T)}  \lambda^\alpha.
 $$
We show \eqref{asdf} for $j< J_n+1$, for the case $j= J_n+1$ one adapts the argument as usual. Since both sides of \eqref{asdf} are translation invariant, we can without loss of generality assume that $x=0$ and drop $x$ from the notation. We find
\begin{align*}
\big\|(\delta\varphi^\lambda)^{[j],n}\big\|_{L^1(\Pi_n)} &= \big\|\sum_{k\in\Z} \langle\varphi^\lambda, e_k\rangle \phi^j_{\rho_n}(k) \delta e_k\big\|_{L^1(\Pi_n)}\\
&\leq \big\|\sum_{k\in\Z} \langle\varphi^\lambda, e_k\rangle \phi^{j}_{\rho_n}(k)  e_k\big\|_{L^1(\T)}\\
&\leq \big\|\varphi^\lambda * \mathcal{F}(\phi^{j}_{\rho_n}) \big\|_{L^1(\R)} \ .
\end{align*}
Recall that $\mathcal{F}(\phi^{j}_{\rho_n})$ annihilates polynomials (which is easy to see on the Fourier side, using that $\phi^{j}_{\rho_n}$ is supported away from $0$). Therefore, denoting by ${\Poly}^r_z[\varphi^\lambda]$ the Taylor polynomial of $\varphi_x^\lambda$ of degree $r-1$ at $z$, we find
\begin{align*}
\big|\varphi^\lambda * \mathcal{F}(\phi^{j}_{\rho_n}) (z)\big|
&=\big|\int_{\mathbb{R}} \varphi^\lambda(y)  \mathcal{F}(\phi^{j}_{\rho_n})(z-y) \big)\,dy\big|\\
&= \big|\int_{\mathbb{R}}\big( \varphi^\lambda(y)- \Poly^r_z[\varphi^\lambda](y) ) \mathcal{F}(\phi^{j}_{\rho_n})(z-y) \big)\,dy\big|\\
&\lesssim  \lambda^{-1}\|\varphi\|_{C^{r}(\mathbb{R})} \int_{\mathbb{R}}\left| \frac{z-y}{\lambda}\right|^r \big|\mathcal{F}(\phi^{j}_{\rho_n})(z-y)\,dy \big|\\
&\lesssim \lambda^{-1} \|\varphi\|_{C^{r}(\T)}  (\lambda^{-1}2^{-j} )^r \ .
\end{align*}
Thus, $$\int_{{\{z\ : \ |z|<2\lambda\}}} |\varphi^\lambda * \mathcal{F}(\phi^{j}_{\rho_n}) (z)| \,dz\lesssim (\lambda^{-1}2^{-j} )^{r} \ .$$
Next, note that for any $m$ one has $|\mathcal{F}(\phi^{1}_{\rho_n})(y)| \lesssim_m \left|\frac{1}{z}\right|^m$. Choosing $m=r+1$, this implies that
\begin{align*}
\int_{{\{z\ : \ |z|>2\lambda\}}} |\varphi^\lambda * \mathcal{F}(\phi^{j}_{\rho_n}) (z)|\, dz 
&\lesssim \|\varphi^\lambda\|_{L^1(\mathbb{R})} \int_{{\{z\ : \ |z|>\lambda\}}}  \frac{1}{2^{-j}} \left|\frac{2^{-j}}{z} \right| ^{r+1} dz \\
&\lesssim \|\varphi\|_{L^1(\mathbb{R})} 2^{-rj} \int_{{\{z\ : \ |z|>\lambda\}}} \left|\frac{1}{z}\right|^{r+1} dz\\
&\lesssim \|\varphi\|_{L^1(\mathbb{R})} 2^{-rj} \lambda ^{-r}\\
&= \|\varphi\|_{L^1(\mathbb{R})} (\lambda^{-1} 2^{-j})^{r} \ ,
\end{align*}
completing the proof of \eqref{asdf}.
 \end{proof}
 
 \begin{proof}[Proof of Lemma \ref{lem:besov-holder}]
By Young's inequality
$$\|f^{[0],n}\|_{L^\infty(\Pi_n)}\leq \|\phi^0_{\rho_n}\|_{L^1(\Pi_n)} \|f\|_{L^\infty(\Pi_n)}\lesssim \|f\|_{C^\alpha(\Pi_n)} \ . $$
For $j\in \{1,...,J_n+1\}$ set $\Phi^{j,n}= \sum_{k\in\Z} \hat\phi^j_{\rho_n}(k)\delta e_k$.
Then it follows as in \eqref{aaaa} that 
$$f^{[j],n}= \Phi^{j,n}\ast_n f(x)= \frac{1}{2n}\sum_{y\in \Pi_n} \Phi^{j,n}(y)\big( f(x-y)- f(x)\big)$$
and thus 
$$\| f^{[j],n}\|_{L^\infty(\Pi_n)} \leq \big\| |\cdot|^\alpha |\Phi^{j,n}(\cdot)|\big\|_{L^1(\Pi_n)} \|f\|_{C^\alpha(\Pi_n)}\ .$$
to bound the last term we use Remark~\ref{rem:fourier}
$$\big\| |\cdot|^\alpha |\Phi^{j,n}(\cdot)|\big\|_{L^1(\Pi_n)}\leq \big\| |\cdot|^\alpha \mathcal{F}^{-1} (\hat\phi^j_{\rho_n})(\cdot)\big\|_{L^1((2n)^{-1}\mathbb{Z})}\ $$
and find that
\begin{align*}
\big\| |\cdot|^\alpha \mathcal{F}^{-1} (\hat\phi^j_{\rho_n})(\cdot)\big\|_{L^1((2n)^{-1}\mathbb{Z})}
&= \frac{1}{2n}\sum_{y\in (2n)^{-1}\mathbb{Z}} |y|^\alpha \frac{1}{2^{{-(j+1)}}}  \mathcal{F}^{-1} (\hat\phi^1_{\rho_n})\left(\frac{y}{2^{-(j+1)}}\right)\\
&=\frac{1}{2^{j+1}n}\sum_{w\in (2^{j+2}n)^{-1}\mathbb{Z}} \left|\frac{w}{2^{j+1}}\right|^\alpha \mathcal{F}^{-1}  (\hat\phi^1_{\rho_n})(w)\\
&=2^{-\alpha(j+1) }\big\| |\cdot|^\alpha \mathcal{F}^{-1} (\hat\phi^1_{\rho_n})(\cdot)\big\|_{L^1((2^{j+1}n)^{-1}\mathbb{Z})} \ , 
\end{align*}
which completes the first inequality in \eqref{eq:besov-holder}. To obtain the second inequality, given $f\in C(\Pi_n)$, we write $f_z(x)=f(x+z)$ for $z\in \Pi_n$.
Let $j_z\in \mathbb{N}$ be such that $|z|\in (2^{-(j_z+1)}, 2^{-j_z}]$, then 
\begin{align*}
\|f-f_z\|_{L^\infty(\Pi_n)} &\leq\sum_{j=0}^{j_z} \|f^{[j],n}-f^{[j],n}_z\|_{L^\infty(\Pi_n)} + \sum_{j=j_z}^{J_n+1} \big(\|f^{[j],n}\|_{L^\infty(\Pi_n)} +\|f^{[j],n}_z\|_{L^\infty(\Pi_n)}\big) \\
&\leq\sum_{j=0}^{j_z} \|f^{[j],n}-f^{[j],n}_z\|_{L^\infty(\Pi_n)} + 2\|f\|_{B^{\alpha}_{\infty,\infty}(\Pi_n)} \sum_{j=j_z}^{J_n+1} 2^{-\alpha j} \\
&\lesssim \sum_{j=0}^{j_z} \|f^{[j],n}-f^{[j],n}_z\|_{L^\infty(\Pi_n)} +\|f\|_{B^{\alpha}_{\infty,\infty}(\Pi_n)} |z|^{\alpha}.
\end{align*}
To bound the first sum, note that using the discrete derivative $D_n f(x):= \frac{f(x)-f(x+(2n)^{-1})}{(2n)^{-1}}$ one finds
\begin{equs}
\|f^{[j],n}-f^{[j],n}_z\|_{L^\infty(\Pi_n)}\lesssim |z|\| D_n f^{[j],n}\|_{L^\infty(\Pi_n)}&\lesssim |z| 2^{j(-\alpha+1)} \| D_n f\|_{B^{\alpha-1}_{\infty,\infty}(\Pi_n)}
\\
&\lesssim
|z| 2^{j(-\alpha+1)} \| f\|_{B^{\alpha}_{\infty,\infty}(\Pi_n)} \ , 
\end{equs}
where we used Lemma~\ref{derivative is continuous lemma needed} from below in the last inequality. Thus one completes the proof by observing that since $\alpha \in (0,1)$, one has
$$\sum_{j=0}^{j_z} \|f^{[j],n}-f^{[j],n}_z\|_{L^\infty(\Pi_n)} \lesssim |z| \| f\|_{B^{\alpha}_{\infty,\infty}(\Pi_n)} \sum_{j=0}^{j_z} 2^{j(-\alpha+1)} \lesssim \| f\|_{B^{\alpha}_{\infty,\infty}(\Pi_n)} |z|^\alpha.$$
\end{proof}
\begin{lemma}\label{derivative is continuous lemma needed}
For any $\alpha\in\mathbb{R}$ there exists a constant $N=N(\phi^0)$ such that for all $n\in\N$, $f\in C(\Pi_n)$ one has the bounds
$$\|D_nf\|_{B^{\alpha-1}_{p,q}(\Pi_n)}\leq N\|f\|_{B^{\alpha}_{p,q}(\Pi_n)} \ , $$
where $D_n f(x):= \frac{f(x)-f(x+(2n)^{-1})}{(2n)^{-1}}$. 
\end{lemma}
\begin{proof}
We prove that $\| (D_nf)^{[j],n}\|_{L^p(\Pi_n)} \lesssim 2^j \| f^{[j],n}\|_{L^p(\Pi_n)}$ for $j\in \{1,\ldots,J_n\}$, the claim for $j=J_n+1$ follows by the usual adaptation of the same argument, while the case $j=0$ is trivial.
Let $\eta^1_{1}: \R\to [0,1]$ be a smooth compactly supported function such that $\eta^1_1|_{\supp \phi^1_1}\equiv 1$, $0\notin \supp \eta^1_1$, and $\supp\eta^1_1\subset B_{(1+\eps_0)\rho_0^{-1}}$. Then set $\eta^j_\rho(x):= \eta(2^{-j}\rho^{-1}x)$ and observe that for $j\geq 1$
$$
(D_nf)^{[j],n}= D_n \Big(\big(\sum_{k=-n}^{n-1} \eta^j_{\rho_n}(k)  \delta e_k \big)*_n f^{[j],n}\Big)= D_n \big(\sum_{k=-n}^{n-1} \eta^j_{\rho_n}(k) \delta e_k \big) *_n f^{[j],n}
$$
and thus 
$$\| (D_nf)^{[j],n}\|_{L^p(\Pi_n)}\leq \Big\|D_n \big(\sum_{k=-n}^{n-1} \eta^j_{\rho_n}(k) \delta e_k \big)\Big\|_{L^1(\Pi_n)} \|f^{[j],n}\|_{L^p(\Pi_n)}$$
which completes the proof, since
\begin{align*}
\Big\|D_n \big(\sum_{k=-n}^{n-1} \eta^j_{\rho_n}(k) \delta e_k \big)\Big\|_{L^1(\Pi_n)} 
&=\Big\|D_n \big(\sum_{k\in \mathbb{Z}}{\eta}^j_{\rho_n}(k) \delta e_k \big)\Big\|_{L^1(\Pi_n)} \\
& \lesssim \frac{1}{2n}\big\|(\mathcal{F}^{-1}{\eta}^j_{\rho_n})(\cdot)- (\mathcal{F}^{-1}{\eta}^j_{\rho_n})(\cdot-(2n)^{-1})\big\|_{L^1(\mathbb{R})}\\
& = \frac{1}{2n}\Big\|\int_0^{(2n)^{-1}}(\partial \mathcal{F}^{-1}{\eta}^j_{\rho_n})(\cdot+y)dy\Big\|_{L^1(\mathbb{R})}\\
& \leq \|\partial \mathcal{F}^{-1}{\eta}^j_{\rho_n}\|_{L^1(\mathbb{R})}\\
&=2^{j} \|\partial \mathcal{F}^{-1}{\eta}^1_{\rho_n}\|_{L^1(\mathbb{R})}\\
&\lesssim 2^{j} \|\partial \mathcal{F}^{-1}{\eta}^1\|_{L^1(\mathbb{R})}.
\end{align*}
\end{proof}
}

\smallskip
\noindent\textbf{Acknowledgments}
MG was funded by the Austrian Science Fund (FWF) Stand-Alone programme P 34992.
HS acknowledges funding by the Imperial College London President's PhD Scholarship.

\bibliography{AllenCahn}{}
\bibliographystyle{Martin}

\end{document}